\setlist[enumerate]{label=(\arabic*)}
\def\csname ver@etex.sty\endcsname{3000/12/31}
\font\tenrm=cmr10
\font\bigss=cmssdc10 scaled 2300
\font\cmsslll=cmss10 at 14 pt
\DeclareMathOperator{\grad}{grad}
\newcommand\dom{dom}
\newcommand\Gsk{G_{{\rm SK}}}
\newcommand\Gc{G_\bC}
\newcommand\Lcal{\mathcal L}
\newcommand\Fcal{\mathcal F}
\newcommand\Hcal{\mathcal H}
\newcommand{\prim}[1]{{#1}^\prime}
\DeclareMathOperator{\con}{con}
\DeclareMathOperator{\red}{red}
\DeclareMathOperator{\Aff}{Aff}
\DeclareMathOperator{\Ad}{Ad}
\DeclareMathOperator{\Lie}{Lie}
\DeclareMathOperator\Id{id}
\DeclareMathOperator\scal{scal}
\let\save@mathaccent\mathaccent
\newcommand*\if@single[3]{%
  \setbox0\hbox{${\mathaccent"0362{#1}}^H$}%
  \setbox2\hbox{${\mathaccent"0362{\kern0pt#1}}^H$}%
  \ifdim\ht0=\ht2 #3\else #2\fi
  }
\newcommand*\rel@kern[1]{\kern#1\dimexpr\macc@kerna}
\newcommand*\widebar[1]{\@ifnextchar^{{\wide@bar{#1}{0}}{}}{\wide@bar{#1}{1}}}
\newcommand*\wide@bar[2]{\if@single{#1}{\wide@bar@{#1}{#2}{1}}{\wide@bar@{#1}{#2}{2}}}
\newcommand*\wide@bar@[3]{%
  \begingroup
  \def\mathaccent##1##2{%
    \let\mathaccent\save@mathaccent
    \if#32 \let\macc@nucleus\first@char \fi
    \setbox\z@\hbox{$\macc@style{\macc@nucleus}_{}$}%
    \setbox\tw@\hbox{$\macc@style{\macc@nucleus}{}_{}$}%
    \dimen@\wd\tw@
    \advance\dimen@-\wd\z@
    \divide\dimen@ 3
    \@tempdima\wd\tw@
    \advance\@tempdima-\scriptspace
    \divide\@tempdima 10
    \advance\dimen@-\@tempdima
    \ifdim\dimen@>\z@ \dimen@0pt\fi
    \rel@kern{0.6}\kern-\dimen@
    \if#31
      \overline{\rel@kern{-0.6}\kern\dimen@\macc@nucleus\rel@kern{0.4}\kern\dimen@}%
      \advance\dimen@0.4\dimexpr\macc@kerna
      \let\final@kern#2%
      \ifdim\dimen@<\z@ \let\final@kern1\fi
      \if\final@kern1 \kern-\dimen@\fi
    \else
      \overline{\rel@kern{-0.6}\kern\dimen@#1}%
    \fi
  }%
  \macc@depth\@ne
  \let\math@bgroup\@empty \let\math@egroup\macc@set@skewchar
  \mathsurround\z@ \frozen@everymath{\mathgroup\macc@group\relax}%
  \macc@set@skewchar\relax
  \let\mathaccentV\macc@nested@a
  \if#31
    \macc@nested@a\relax111{#1}%
  \else
    \def\gobble@till@marker##1\endmarker{}%
    \futurelet\first@char\gobble@till@marker#1\endmarker
    \ifcat\noexpand\first@char A\else
      \def\first@char{}%
    \fi
    \macc@nested@a\relax111{\first@char}%
  \fi
  \endgroup
}
\newcommand{\g}{\gamma}
\renewcommand{\o}{\omega}
\renewcommand{\O}{\Omega}
\newcommand{\bC}{\mathbb{C}}
\newcommand{\bR}{\mathbb{R}}
\newcommand\Sp{\mathrm{Sp}}
\newcommand\GL{\mathrm{GL}}
\newcommand{\id}   {{\mathbf{1}}}
\renewcommand{\Im}{\mathop{\mathrm{Im}}}
\renewcommand{\Re}{\mathop{\mathrm{Re}}}
\newcommand\inv[1]{{#1}^{-1}}
\newcommand*\diff{\mathop{}\!{d}} 
\newcommand{\p}{\partial}
\renewcommand{\square}{\kern1pt\vbox
               {\hrule height 0.6pt\hbox{\vrule width 0.6pt\hskip 3pt
    \vbox{\vskip 6pt}\hskip 3pt\vrule width 0.6pt}\hrule height0.6pt}
    \kern1pt}
\newcommand*\colvec[1]{
  \global\colveccount#1
  \begin{pmatrix}
  \colvecnext
}
\def\colvecnext#1{
  #1
  \global\advance\colveccount-1
  \ifnum\colveccount>0
  \\
  \expandafter\colvecnext
  \else
  \end{pmatrix}
  \fi
}
\newtoks\rowvectoks
\newcommand{\rowvec}[2]{%
  \rowvectoks={#2}\count255=#1\relax
  \advance\count255 by -1
  \rowvecnexta}
\newcommand{\rowvecnexta}{%
  \ifnum\count255>0
    \expandafter\rowvecnextb
  \else
    \begin{pmatrix}\the\rowvectoks\end{pmatrix}
  \fi}
\newcommand\rowvecnextb[1]{%
    \rowvectoks=\expandafter{\the\rowvectoks&#1}%
    \advance\count255 by -1
    \rowvecnexta}
\DeclareMathOperator{\Span}{span}
\newcommand{\ra}{\rightarrow}
\DeclareMathOperator\Heis{Heis}
\DeclareMathOperator\Hol{Hol\;}
\newtheorem{Pb}{Problem}
\newtheorem{Th}{Theorem}[section]
\newtheorem{Ex}[Th]{Example}
\newtheorem{Prop}[Th]{Proposition}
\newtheorem{Cor}[Th]{Corollary}
\newtheorem{Lem}[Th]{Lemma}
\newtheorem{Def}[Th]{Definition}
\theoremstyle{definition}
\newtheorem{rem}[Th]{Remark}
\newcommand{\bP}{\begin{Pb}\ \ }
\newcommand{\eP}{\end{Pb}}
\newcommand{\bt}{\begin{Th}\ \ }
\newcommand{\et}{\end{Th}}
\newcommand{\bp}{\begin{Prop}\ \ }
\newcommand{\ep}{\end{Prop}}
\newcommand{\bc}{\begin{Cor}\ \ }
\newcommand{\ec}{\end{Cor}}
\newcommand{\bl}{\begin{Lem}\ \ }
\newcommand{\el}{\end{Lem}}
\newcommand{\bd}{\begin{Def}\ \ }
\newcommand{\ed}{\end{Def}}
\newcommand{\pf}{\begin{proof}[{\it Proof:\ \ }]}
\newcommand{\epf}{\end{proof}}
\newcommand{\be}{\begin{equation}}
\newcommand{\ee}{\end{equation}}
\newcommand{\arr}{\begin{array}{rlll}}
\newcommand{\ea}{\end{array}}
\newcommand{\bea}{\begin{eqnarray}}
\newcommand{\eea}{\end{eqnarray}}
\newcommand{\bean}{\begin{eqnarray*}}
\newcommand{\eean}{\end{eqnarray*}}
\begin{document}
\rightline{LTH1121}
\rightline{}
\vskip 1.5 true cm
\begin{center}
{\bigss
ASK/PSK-correspondence and the r-map}
\vskip 1.0 true cm
{\cmsslll  V.\ Cort\'es$^1$, P.-S.\ Dieterich$^1$ and T.\ Mohaupt$^2$
} \\[3pt]
{\tenrm   $^1$Department of Mathematics\\
and Center for Mathematical Physics\\
University of Hamburg\\
Bundesstra{\ss}e 55,
D-20146 Hamburg, Germany\\
vicente.cortes@uni-hamburg.de, peter-simon.dieterich@uni-hamburg.de
\\[1em]
$^2$Department of Mathematical Sciences\\
University of Liverpool\\
Peach Street \\
Liverpool L69 7ZL, UK\\
Thomas.Mohaupt@liv.ac.uk}
\\[1em]
\today
\end{center}
\vskip 1.0 true cm
\baselineskip=18pt
\begin{abstract}
  \noindent
  We formulate a correspondence between affine and projective special K\"ahler manifolds of the same dimension. As an application, we show that, under this correspondence, the affine special K\"ahler manifolds in the image of the rigid r-map are mapped to one-parameter deformations of projective special K\"ahler manifolds in the image of the supergravity r-map. The above one-parameter deformations are interpreted as perturbative $\alpha'$-corrections in heterotic and type-II string compactifications with $N=2$ supersymmetry. Also affine special K\"ahler manifolds with quadratic prepotential are mapped to one-parameter families of projective special K\"ahler manifolds with quadratic prepotential. We show that the completeness of the deformed supergravity r-map metric depends solely on the (well-understood) completeness of the undeformed metric and the sign of the deformation parameter.
\\
{\it Keywords:  special real manifolds, special K\"ahler manifolds, r-map}\\[.5em]
{\it MSC classification: 53C26 (primary).}
\end{abstract}

\tableofcontents

\section*{Introduction}
The supergravity c-map, described in \cite{ferrara90},  can be understood as a special case of a more general construction, the HK/QK-correspondence. In fact, the supergravity c-map can be reduced to the much simpler rigid c-map. The corresponding
manifolds and maps are summarized in the following
diagram:
\begin{equation}
\label{Diagram:c-map}
\xymatrix{
M \ar@{|->}[r]^{c} \ar@{|->}[d]_{SC}
& N  \ar@{|->}[rrr]^{con} \ar@{|->}[drrr]^{HK/QK}
& & &  \hat{N}  \ar@{|->}[d]^{SC}\\
\bar{M}  \ar@{|->}[rrrr]_{\bar{c}} & & & & \bar{N} \\
}
\end{equation}

In this diagram the scalar manifolds $\bar{M}$ of four-dimensional
vector multiplets coupled to supergravity, which
are projective special K\"ahler, are related to the
scalar manifolds
$\bar{N}$ of three-dimensional hypermultiplets coupled to
supergravity, which are quaternionic-K\"ahler,
by the supergravity $c$-map, which is induced
by dimensional reduction from four to three dimensions.
In the superconformal formulation of supergravity, the scalar manifolds $\bar M$ and $\bar N$ are obtained as superconformal quotients, denoted by $SC$ in the diagram, from the scalar manifolds $M$ and $\hat N$ of associated rigid superconformal theories.
From this viewpoint reducing the supergravity c-map to the rigid c-map requires to associate to hyper-K\"ahler manifolds $N$ in the image of the rigid c-map a hyper-K\"ahler cone $\hat N$. This operation is denoted in the diagram by $con$ and is known as conification \cite{acm12}.
The resulting relation between hyper-K\"ahler and quaternionic K\"ahler manifolds of the same dimension in the image of the rigid and local c-maps, respectively, is obtained from the HK/QK-correspondence \cites{haydys2008hyperkahler,acm12,acdm13}.
It turns out that to apply the HK/QK-correspondence it is not essential that the hyper-K\"ahler manifold is in the image of the rigid c-map but what is required is essentially a function generating a certain isometric Hamiltonian flow.
As a result one obtains not only the supergravity c-map metric but a one-parameter deformation thereof.

When attempting to apply this approach to the supergravity r-map
introduced in
\cite{dewit92}, which is induced by the dimensional reduction of five-dimensional vector multiplets to four dimensions, one runs into the following problem. Although there exists a conification procedure for K\"ahler manifolds carrying an isometric Hamiltonian flow, which could potentially be applied to our problem, it turns out that the manifolds in the image of the rigid r-map do not carry a distinguished isometric Hamiltonian flow. Even worse, applying the K\"ahler conification to any of the generically existing Hamiltonian flows does not yield the desired metric.

In this paper, we will solve this puzzle by establishing an ASK/PSK-correspondence, see \Cref{thm:conditionsglobalconification} and \Cref{def:askpsk}, relating affine special K\"ahler to projective special K\"ahler manifolds of the same dimension. This is achieved by a new conification procedure which maps affine special K\"ahler manifolds to conical affine special K\"ahler manifolds and does not require a Hamiltonian flow.
The relations between the rigid and local $r$-maps,
superconformal quotients, conification, and the ASK/PSK
correspondence are summarized in the following diagram.
\begin{equation}
\label{Diagram:r-map}
\xymatrix{
{U} \ar@{|->}[r]^{r} \ar@{|->}[d]_{SC}
& M  \ar@{|->}[rrr]^{con} \ar@{|->}[drrr]^{ASK/PSK}
& & & \hat{M}  \ar@{|->}[d]^{SC}\\
{\cal H}  \ar@{|->}[rrrr]_{\bar{r}} & & &  &\bar{M} \\
}
\end{equation}
Superconformal quotients map
conical affine special real manifolds ${U}$ to
projective special real manifolds ${\cal H}$,
and conical affine special K\"ahler manifolds $\hat{M}$
to projective special K\"ahler manifolds $\bar{M}$.
While ${U}$ and $\hat{M}$ are the scalar target
manifolds of five- and four-dimensional superconformal
vector multiplets, ${\cal H}$ and $\bar{M}$ are the
target manifolds of the gauge equivalent theories of
five- and four-dimensional
vector multiplets coupled to (Poincar\'e) supergravity.
The lift of the supergravity $r$-map $\bar r$ to the scalar manifolds of the associated superconformal vector multiplets
is the composition $con\circ r$ of the rigid $r$-map $r$
with the new conification map $con$, which will be defined and
analyzed in detail in this paper. In short, by applying
the rigid $r$-map to a conical affine special real manifold
${U}$ one obtains a K\"ahler manifold $M$ which is
affine special, but not conical. To relate $M$ to the
projective special K\"ahler manifold $\bar{M}$ obtained
by the supergravity $r$-map, we will construct a conical
affine special K\"ahler manifold $\hat{M}$ of dimension
$\dim_{\mathbb{R}} \hat{M} = \dim_{\mathbb{R}} M + 2 =
\dim_{\mathbb{R}}\bar{M}+2$ using the conification map
$con$. This provides us with a `superconformal lift'
$U \mapsto \hat{M}$ of the supergravity $r$-map and
with a correspondence $M \mapsto \bar{M}$ between
affine and projective special K\"ahler manifolds
of the same dimension, which are in the image of the
respective $r$-map. This is a special case of the
ASK/PSK correspondence.

Now we explain the geometric idea underlying the ASK/PSK-correspondence.
The initial affine special K\"ahler manifold of complex dimension $n$ can be locally realized as a Lagrangian submanifold of $\bC^{2n}$ with induced geometric data, whereas a projective special K\"ahler manifold of complex dimension $n$ is locally realized as projectivization of a Lagrangian cone in $\bC^{2n+2}$, see \cite{acd02} for these statements.
So basically we have to map a Lagrangian submanifold $\Lcal\subset\bC^{2n}$ to a Lagrangian cone in $\hat\Lcal\subset\bC^{2n+2}$. This is done in two steps.
First, we embed $\Lcal$ into the affine hyperplane $\{z^0=1\}\subset\bC^{2n+1}=\bC\times\bC^{2n}$, where $z^0$ denotes the coordinate on the first factor. Then we take $\hat\Lcal\subset\bC^{2n+2}$ to be the cone over the graph of certain function
\begin{equation}
  \bC^{2n+1} \supset \{1\}\times\Lcal\cong\Lcal\xrightarrow{f}\bC.
\end{equation}
The function $f$ is what we call a Lagrangian potential, see \Cref{def:lagrangianpotentialpair}, and is unique up to an additive constant $C$.
This constant plays a role analogous to the freedom in the choice of the Hamiltonian function in the HK/QK-correspondence \cites{acm12,acdm13}.
Whereas the real part of $C$ has no effect on the resulting geometry, changing the imaginary part gives rise to a family of projective special K\"ahler manifolds $(\widebar M_c, \widebar g_c)$ depending on the real parameter $c=\Im(C)$.
We discuss some global aspects of this construction in terms of a flat principal bundle with structure group $\Gsk=\Sp(\bR^{2n})\ltimes\Heis_{2n+1}(\bC)$. This group acts on the set of pairs $(\Lcal, f)$, where $\Lcal\subset\bC^{2n}$ is a Lagrangian submanifold and $f$ is a Lagrangian potential, and acts simply transitively on the set of special K\"ahler pairs $(\phi, F)$ consisting of a (pseudo-)K\"ahlerian Lagrangian immersion $\phi: M \to \bC^{2n}$ and a corresponding holomorphic prepotential $F$, see \Cref{def:prepotential}. For the close relation between Lagrangian potentials and holomorphic prepotentials, see \Cref{lem:lagpotprepotcorrespondence}. Note that the group $\Gsk$ is a central extension of the affine group $\Aff_{\Sp(\bR^{2n})}(\bC^{2n}) = \Sp(\bR^{2n})\ltimes\bC^{2n}$. The latter group acts simply transitively on K\"ahlerian Lagrangian immersions, and the central extension is necessary to extend this action to the holomorphic prepotentials. It turns out that the group $\Gsk$, contrary to the group $G = \Sp(\bR^{2n})\ltimes\Heis_{2n+1}(\bR)$, is not compatible with the induced K\"ahler metrics on the Lagrangian cones. It includes transformations which change the holomorphic prepotential $F$ by terms of the form $\sqrt{-1}(a_k Z^k+c)$, where $a_k$ and $c$ are real, which are not compatible with the induced metrics.

Our main application of the ASK/PSK-correspondence is a one-parameter deformation of the supergravity $r$-map obtained by applying the ASK/PSK-correspondence to an affine special K\"ahler manifold which is obtained from a conical affine special real manifold $U\subset\bR^{n}$ via the rigid $r$-map, see \Cref{thm:rmap}.
We give a global description of the resulting projective special K\"ahler manifolds $(\widebar M_c, \widebar g_c)$, where $(\widebar M_0, \widebar g_0) = (\widebar M, \widebar g)$ is the manifold in the image of the supergravity $r$-map.
The manifold $\widebar M_c$ is a domain in $\bC^n$ of the form $\widebar M_c = \bR^n+iU_c$, where $U_c \subset U$.
We analyze when $(\widebar M_c, \widebar g_c)$ is a complete Riemannian manifold.
First of all, the undeformed Riemannian manifold $(\widebar M, \widebar g)$ is complete if and only if the underlying projective special real manifold $\Hcal\subset\bR^n$ is a connected component of a global level set $\{x\in\bR^n\mid h(x)=1\}$ of a homogeneous cubic polynomial $h$ \cites{cortes2012completeness,cns16}.
Recall that the level set is required to be locally strictly convex for $\Hcal$ to be a projective special real manifold (with positive definite metric).
Assuming the undeformed metric to be complete we prove that the deformed manifold $(\widebar M_c, \widebar g_c)$, $c\neq 0$, is Riemannian and complete if and only if $c$ is negative, see \Cref{thm:rmap}.
These results should be contrasted with the more involved completeness theorems for one-loop deformed $c$-map spaces \cite{cds16}.
In the case of projective special K\"ahler manifolds with cubic prepotential the completeness of the supergravity \linebreak[3]$c$-map metric was shown to be preserved precisely under one-loop deformations with positive deformation parameter.
In case of general $c$-map spaces, however, this result has been established only under the additional assumption of regular boundary behavior for the initial projective special K\"ahler manifold, which is satisfied, for instance, for quadratic prepotentials.
As in the case of the one-loop deformed $c$-map the isometry type of the deformed r-map space $(\widebar M_c,\widebar g_c)$ depends only on the sign of $c$ (positive, negative or zero).
Note that the completeness of $\widebar M_0$ implies that $\widebar M_1$ is neither isometric to $\widebar M_0$ nor to $\widebar M_{-1}$, since the latter $2$ manifolds are then complete whereas $\widebar M_1$ is incomplete.
Computing the scalar curvature in examples, see Examples \ref{ex:ex1} and \ref{ex:ex2}, we complete this analysis by showing that $\widebar M_0$ and $\widebar M_{-1}$ are in general not isometric.
Incidentally, most, but not all, of the above results extend from cubic polynomials to general homogeneous functions, say of degree $k>1$, see \Cref{rem:eldef}.
For instance, it is not known whether the above necessary and sufficient completeness criterion for projective special real manifolds \cite{cns16}*{Theorem 2.5} holds for polynomials of quartic and higher degree.

Let us now explain how our deformation of the supergravity $r$-map can be interpreted physically as a `stringy deformation.'
Five-dimensional supergravity coupled to $n_V = n-1$ vector multiplets
(and as well hypermultiplets, which are not relevant for our
discussion) can be obtained by compactification of the
heterotic string on $K3 \times S^1$, together with a choice
of an $E_8\times E_8$ or $SO(32)$ vector
bundle $V$ \cites{Antoniadis:1995vz,Aspinwall:1996mn,Louis:1996mt},
referred to as the gauge bundle,
or by
compactification of eleven-dimensional supergravity on a
Calabi-Yau threefold \cite{Cadavid:1995bk}. The vector multiplet couplings
are encoded in a cubic, homogeneous polynomial (sometimes called cubic prepotential),
\[
h = -\frac{1}{6} C_{ijk}x^i x^j x^k \;,\;\;\;i,j,k=1, \ldots, n\;,
\]
which can
be identified up to a sign with the Hesse potential $-h=\frac16C_{ijk}x^ix^jx^k$ of a projective special real manifold (with positive definite metric). The coefficients $C_{ijk}$ depend on the details of the compactification.
For Calabi-Yau compactifications
they are the triple-intersection numbers of four-cycles, while
for heterotic compactifications they depend on the number
of vector multiplets and the gauge bundle.

Upon reduction on a further circle the Hesse potential determines a holomorphic
prepotential, with the real variables $x^i$ being replaced
by complex variables $Z^i$:
\begin{equation}\label{eq:prepot}
\hat F =  \frac{1}{6}C_{ijk} \frac{Z^i Z^j Z^k}{Z^0} \;.
\end{equation}
But while a five-dimensional supersymmetry requires that the Hesse potential must be a polynomial, four-dimensional supersymmetry only requires the prepotential to be holomorphic.
This allows further terms in Eq.~\ref{eq:prepot}, and it turns out that such terms
are created by $\alpha'$-corrections. The dimensional
reductions of the constructions discussed above give rise
to heterotic string theory on $K3\times T^2$ and type-IIA
string theory on a Calabi-Yau threefold. The prepotential,
including corrections takes the form
\cites{Candelas:1990rm,Hosono:1993qy,Ceresole:1995jg,deWit:1995zg,Antoniadis:1995ct,Harvey:1995fq}
\[
\hat F =  \frac{1}{6}C_{ijk} \frac{Z^i Z^j Z^k}{Z^0}
- 2\sqrt{-1}c  (Z^0)^2 + \cdots \;,
\]
where the omitted terms are exponentially small for
large $\mbox{Re}(Z^i/Z^0)$ and the factor $-2$ corresponds to the factor of $-2$ in formula \eqref{eq:Fkorrektur}. In type-II Calabi-Yau compactifications the omitted terms are world-sheet intstantons and, therefore, non-per\-tur\-bative
corrections in $\alpha'$. The leading correction term $-2\sqrt{-1}c(Z^0)^2$
arises at four-loop level in $\alpha'$ perturbation
theory \cites{Grisaru:1986kw,Nemeschansky:1986yx,Candelas:1990rm},
and the real coefficient $c$ is proportional to $\zeta(3) \chi$,
where $\zeta$ is the Riemann $\zeta$-function and $\chi$ is
the Euler number of the Calabi-Yau three-fold.
The heterotic prepotential has an analogous structure, and the
coefficient $c$
is proportional to $\zeta(3) c_1(0)$, where
$c_1(0)$, as well as the coefficients of the further correction
terms, is obtained by expanding a
(model-dependent) modular form \cite{Harvey:1995fq}.

We have mentioned that when performing the conification we can shift
the Lagrangian potential (or, equivalently, the holomorphic prepotential $F=\frac16C_{ijk}z^i z^j z^k$ of the initial affine special K\"ahler manifold) by an imaginary constant, which then deforms
the resulting prepotential by precisely the same type of term
as is created by the leading $\alpha'$-correction.
Thus the
resulting deformed supergravity r-map might be called a
`stringy' r-map.
We remark that the further freedom to also include imaginary translations
does not have an interpretation in the above string theory
realizations. Imaginary translations correspond to adding terms
\[
\Delta \hat F = i a_{0I} Z^0 Z^I
\]
to the prepotential, where $a_{0I}$ are real constants. Such terms do not occur as quantum or
stringy corrections in the above four-dimensional string models.
Curiously, adding a term
\[
  \delta \hat
  F = \frac{1}{24}c_{2I} Z^0 Z^I
\]
to the type-IIA prepotential,
where $c_{2I}$ are the components of the second Chern class of $X$,
has been discussed before in the literature.
However, this term has a real coefficient and can be
transformed away by a symplectic transformation. Conversely, it
can be generated by a symplectic transformation, which
was used in \cite{Behrndt:1996jn} as a solution-generating
technique for black hole solutions.

\section{Preliminaries}
\begin{Def}
\label{def:ask}
  An \emph{affine special K\"ahler manifold} $(M,J,g,\nabla)$ is a pseudo-K\"ahler manifold $(M,J,g)$ with symplectic form $\omega := g(\cdot,J\cdot)$ endowed with a flat torsion-free connection $\nabla$ such that $\nabla\omega = 0$ and $\diff^\nabla J = 0$.
\end{Def}


\begin{Def}
\label{def:kaehlerianlagrangiantotallycomplex}
  Let $M$ be a complex manifold of complex dimension $n$ and consider the complex vector space $T^*\bC^{n} = \bC^{2n}$ endowed with the canonical coordinates $(z^1,\ldots,z^n,\allowbreak   w_1,\ldots, w_n)$, standard complex symplectic form $\Omega = \sum_{i=1}^n \diff z^i\wedge\diff w_i$, standard real structure $\tau: \bC^{2n}\to \bC^{2n}$ and Hermitian form $\gamma = \sqrt{-1}\Omega(\cdot,\tau\cdot)$. A holomorphic immersion $\phi:M\to \bC^{2n}$ is called \emph{Lagrangian} (respectively, \emph{K\"ahlerian}) if $\phi^*\Omega = 0$ (respectively, if $\phi^*\gamma$ is non-degenerate). $\phi$ is called \emph{totally complex} if $\diff\phi(T_pM)\cap \tau\diff\phi(T_pM) = 0$ for all $p\in M$.
\end{Def}

\begin{Prop}[\cite{acd02}]
  \label{prop:kaehlerianifftotallycomplex}
  Let $\phi: M \to \bC^{2n}$ be a holomorphic immersion.
  \begin{enumerate}
  \item $\phi$ is totally complex if and only if its real part $\Re\phi: M \to \bR^{2n}$ is an immersion.
  \item If $\phi$ is Lagrangian, then $\phi$ is K\"ahlerian if and only if it is totally complex.
\end{enumerate}
\end{Prop}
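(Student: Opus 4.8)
The plan is to reduce everything to pointwise linear algebra. All the conditions involved --- Lagrangian, K\"ahlerian, totally complex, and that $\Re\phi$ be an immersion --- are pointwise, so fix $p\in M$ and put $W:=\diff\phi_p(T_pM)\subseteq\bC^{2n}$. Since $\phi$ is a holomorphic immersion, $\diff\phi_p\colon T_pM\to W$ is a $\bC$-linear isomorphism onto the complex $n$-dimensional subspace $W$. I would first record the following dictionary: (a) $\Re\phi$ is an immersion at $p$ $\iff$ $\Re|_W$ is injective $\iff$ $W\cap\sqrt{-1}\,\bR^{2n}=0$, because $\ker\Re=\sqrt{-1}\,\bR^{2n}$ is the $(-1)$-eigenspace of $\tau$; (b) $\phi$ is totally complex at $p$ $\iff$ $W\cap\tau W=0$, by definition; (c) if $\phi$ is Lagrangian then $\Omega|_W=0$, and since $\Omega$ has constant real coefficients one has $\overline{\Omega(X,Y)}=\Omega(\tau X,\tau Y)$, so $\Omega|_{\tau W}=0$ as well; thus $W$ and $\tau W$ are both $\Omega$-Lagrangian of complex dimension $n$, whence $W^{\perp_\Omega}=W$ and $(\tau W)^{\perp_\Omega}=\tau W$.

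For part (1), by (a) and (b) it suffices to prove the equivalence $W\cap\sqrt{-1}\,\bR^{2n}=0\iff W\cap\tau W=0$. If $0\neq w\in W\cap\sqrt{-1}\,\bR^{2n}$, write $w=\sqrt{-1}\,v$ with $v\in\bR^{2n}\setminus\{0\}$; then $v=-\sqrt{-1}\,w\in W$ since $W$ is a complex subspace, and $\tau v=v$, so $0\neq v\in W\cap\tau W$. Conversely, if $0\neq u\in W\cap\tau W$ then $\tau u\in W$ too, so $u-\tau u\in W$; since $\Re(\tau u)=\Re u$ the vector $u-\tau u$ is purely imaginary, hence if $u\neq\tau u$ it is a nonzero element of $W\cap\sqrt{-1}\,\bR^{2n}$, while if $u=\tau u$ then $u\in\bR^{2n}$ and $\sqrt{-1}\,u$ is a nonzero element of $W\cap\sqrt{-1}\,\bR^{2n}$. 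This gives statement (1).

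For part (2), assume $\phi$ Lagrangian. Under the isomorphism $\diff\phi_p$, the form $\phi^*\gamma$ at $p$ corresponds to the restriction $\gamma|_W$, which is non-degenerate $\iff$ it has trivial radical. An element $X\in W$ lies in the radical $\iff$ $\sqrt{-1}\,\Omega(X,\tau Y)=0$ for all $Y\in W$ $\iff$ $X\in(\tau W)^{\perp_\Omega}$; by (c) this says $X\in\tau W$. Hence the radical of $\gamma|_W$ equals $W\cap\tau W$, so $\phi$ is K\"ahlerian $\iff$ $W\cap\tau W=0$ $\iff$ $\phi$ is totally complex.

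The argument is elementary linear algebra, so there is no substantial obstacle; the only points needing care are bookkeeping: that $\ker\Re$ is precisely the $(-1)$-eigenspace $\sqrt{-1}\,\bR^{2n}$ of $\tau$, that non-degeneracy of the Hermitian form $\gamma|_W$ must be read as triviality of its radical (not definiteness), and the two degenerate sub-cases in part (1) where the naive difference vector vanishes and one multiplies by $\sqrt{-1}$ instead. The conceptual heart is observation (c): the identity $(\tau W)^{\perp_\Omega}=\tau W$, which uses only that $\Omega$ has real coefficients together with the Lagrangian hypothesis, and which is what converts the degeneracy of $\gamma|_W$ into the intersection condition $W\cap\tau W\neq 0$.
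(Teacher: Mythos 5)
Your proof is correct; the paper itself states this proposition without proof, citing \cite{acd02}, and your pointwise linear-algebra argument (reducing to the subspace $W=\diff\phi_p(T_pM)$, identifying $\ker\Re$ with $\sqrt{-1}\,\bR^{2n}$, and using that $\tau W$ is $\Omega$-Lagrangian to identify the radical of $\gamma|_W$ with $W\cap\tau W$) is exactly the standard argument given there. No gaps.
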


A K\"ahlerian Lagrangian immersion $\phi: M \to \bC^{2n}$ induces on $M$ the structure of an affine special K\"ahler manifold. Locally, an affine special K\"ahler manifold can always be realized as a K\"ahlerian Lagrangian immersion. This is reflected in the following proposition.

\begin{Prop}[\cite{acd02}]
\label{prop:askimmersion}
Let $(M,J,g,\nabla)$ be a simply connected affine special K\"ahler manifold of complex dimension $n$. Then there exists a K\"ahlerian Lagrangian immersion $\phi: M \to \bC^{2n}$ inducing the affine special K\"ahler structure $(J,g,\nabla)$ on $M$. Moreover, $\phi$ is unique up to a transformation of $\bC^{2n}$ by an element in $\Aff_{\Sp(\bR^{2n})}(\bC^{2n})$.
\end{Prop}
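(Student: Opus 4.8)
Following \cite{acd02}, the plan is to establish existence by constructing $\phi$ explicitly out of the data $(\nabla,J)$, and uniqueness by a Liouville-type rigidity argument. Since $\nabla$ is flat, torsion-free and $M$ is simply connected, the affine structure defined by $\nabla$ admits a globally defined developing map $x\colon M\to\bR^{2n}$, which is a local diffeomorphism and is determined up to post-composition with an element of $\Aff(\bR^{2n})$; equivalently, the $\bR^{2n}$-valued one-form $\theta:=\diff x$ is $\nabla$-parallel and restricts to a linear isomorphism $T_pM\to\bR^{2n}$ at every point. As $\omega$ is $\nabla$-parallel it has constant coefficients in $\nabla$-affine coordinates, so using the linear part of the freedom in their choice we may assume that $x$ identifies $\omega$ with the restriction $\omega_{\mathrm{std}}$ to $\bR^{2n}$ of the complex symplectic form $\Omega$ on $\bC^{2n}=T^*\bC^n$, i.e.\ $\theta^*\omega_{\mathrm{std}}=\omega$. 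Now set $\theta_J:=\theta\circ J$. Using that $\theta$ is $\nabla$-parallel and $\nabla$ torsion-free, a direct computation gives $\diff\theta_J(X,Y)=\theta\big((\nabla_XJ)Y-(\nabla_YJ)X\big)=\theta\big(\diff^\nabla J(X,Y)\big)$, so the hypothesis $\diff^\nabla J=0$ says exactly that $\theta_J$ is closed; since $M$ is simply connected we may write $\theta_J=\diff\psi$ for a map $\psi\colon M\to\bR^{2n}$, unique up to an additive constant. Define $\phi:=x-\sqrt{-1}\,\psi\colon M\to\bC^{2n}$.

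Next I would check that $\phi$ is a K\"ahlerian Lagrangian immersion inducing $(J,g,\nabla)$. From $\diff\phi=\theta-\sqrt{-1}\,(\theta\circ J)$ and $J^2=-\Id$ one verifies $\diff\phi\circ J=\sqrt{-1}\,\diff\phi$, so $\phi$ is holomorphic, and it is an immersion because its real part $x=\Re\phi$ is; hence $\phi$ is totally complex by \Cref{prop:kaehlerianifftotallycomplex}(1). Expanding $\phi^*\Omega(X,Y)=\Omega(\diff\phi(X),\diff\phi(Y))$ in terms of $\theta$ and $\theta\circ J$ and using $\theta^*\omega_{\mathrm{std}}=\omega$, the vanishing $\phi^*\Omega=0$ reduces to the identities $\omega(JX,JY)=\omega(X,Y)$ and $\omega(X,JY)+\omega(JX,Y)=0$, which both hold because $\omega=g(\cdot,J\cdot)$ with $g$ $J$-invariant. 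Thus $\phi$ is Lagrangian, and by \Cref{prop:kaehlerianifftotallycomplex}(2) it is K\"ahlerian; in fact the same computation yields $\phi^*\gamma=2g+2\sqrt{-1}\,\omega$, so $\phi^*\gamma$ is non-degenerate. Finally, $\phi$ induces $(J,g,\nabla)$: the flat connection obtained by pulling back the standard one on $\bR^{2n}$ along $\Re\phi=x$ is $\nabla$ by the very choice of $x$, the induced complex structure is $J$ since $\phi$ is $J$-holomorphic, and the induced metric is $g$ up to the normalization constant visible in $\Re\phi^*\gamma=2g$.

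For uniqueness, suppose $\phi,\phi'\colon M\to\bC^{2n}$ are K\"ahlerian Lagrangian immersions inducing the same $(J,g,\nabla)$. Then $\Re\phi$ and $\Re\phi'$ are both developing maps of the affine structure of $\nabla$, hence $\Re\phi'=B\circ\Re\phi$ for some $B\in\Aff(\bR^{2n})$; and since both pull $\omega_{\mathrm{std}}$ back to the same $\nabla$-parallel two-form (a fixed nonzero multiple of $\omega$, by the Lagrangian condition), the linear part of $B$ lies in $\Sp(\bR^{2n})$. Let $A\in\Aff_{\Sp(\bR^{2n})}(\bC^{2n})$ be the $\bC$-linear extension of $B$. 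Then $\phi'$ and $A\circ\phi$ are holomorphic maps $M\to\bC^{2n}$ with the same real part, so their difference is holomorphic with vanishing real part, hence (as $M$ is connected) a constant of the form $\sqrt{-1}\,c$ with $c\in\bR^{2n}$. Composing $A$ with the translation by $\sqrt{-1}\,c$, which belongs to the translation subgroup $\bC^{2n}\subset\Aff_{\Sp(\bR^{2n})}(\bC^{2n})$, we obtain $\phi'=\widetilde A\circ\phi$ with $\widetilde A\in\Aff_{\Sp(\bR^{2n})}(\bC^{2n})$, as required.

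The only step requiring genuine care is the normalization bookkeeping: coordinating the flat and symplectic data carried by the developing map $x$ with the complex structure $J$ so that $\phi^*\Omega=0$ and $\phi^*\gamma$ come out as stated. The conceptual heart of the matter is the identity $\diff(\theta\circ J)=\theta\circ\diff^\nabla J$, which makes $\diff^\nabla J=0$ precisely the integrability condition that produces the holomorphic Lagrangian immersion; everything else is linear algebra with $\omega$, $g$ and $J$, together with the Liouville-type rigidity used in the uniqueness step.
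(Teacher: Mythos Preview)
The paper does not give its own proof of this proposition; it is quoted from \cite{acd02} and only supplemented by the remark that simple transitivity of the $\Aff_{\Sp(\bR^{2n})}(\bC^{2n})$-action can be shown along the lines of the freeness argument in \Cref{prop:gactiononprepotentials}. Your sketch is precisely the construction of \cite{acd02}: build $\phi$ as $x-\sqrt{-1}\,\psi$ from the $\nabla$-developing map $x$ and a primitive $\psi$ of $\theta\circ J$ (whose closedness is equivalent to $\diff^{\nabla}J=0$), and deduce uniqueness from rigidity of developing maps together with the observation that the difference of two holomorphic maps with equal real part is an imaginary constant.

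One point worth tightening is your justification that the linear part of $B$ lies in $\Sp(\bR^{2n})$. You say both $\Re\phi$ and $\Re\phi'$ pull $\omega_{\mathrm{std}}$ back to ``a fixed nonzero multiple of $\omega$, by the Lagrangian condition''. This is true, but it uses more than the Lagrangian condition alone: writing $\alpha:=(\Re\phi)^*\omega_{\mathrm{std}}$ and expanding $\phi^*\Omega=0$ and $\Re\phi^*\gamma=g$ in terms of $\alpha$ and $J$ (via $\diff\Im\phi=-\diff\Re\phi\circ J$), one finds $\alpha=\tfrac12\,\omega$ with a universal constant independent of $\phi$. That computation is exactly the ``normalization bookkeeping'' you flag at the end, so you might spell it out. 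The paper's own freeness argument in \Cref{prop:gactiononprepotentials} sidesteps this by differentiating $X\circ\phi+v=\phi$ and taking real parts to conclude $X=\id$ directly from $\Re\phi$ being an immersion; that route avoids computing $(\Re\phi)^*\omega_{\mathrm{std}}$ altogether and would streamline your uniqueness paragraph.
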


More precisely, the action of the group $\Aff_{\Sp(\bR^{2n})}(\bC^{2n})$ on the set of K\"ahlerian Lagrangian immersions $\phi: M \to\bC^{2n}$ is simply transitive, as can be proven along the lines of the proof of simple transitivity in \Cref{prop:gactiononprepotentials}.




\begin{Def}
\label{def:prepotential}
  Let $\phi: M \to \bC^{2n}$ be a K\"ahlerian Lagrangian immersion of an affine special K\"ahler manifold $M$. Denote by $\lambda = w^t\diff z = \sum_{i=1}^n w_i\diff z^i$ the Liouville form of $\bC^{2n}$. A function $F: M\to\bC$ is called a \emph{prepotential} of $\phi$ if $\diff F = \phi^*\lambda$.
\end{Def}
\begin{rem}
  \label{rem:ask}
  \begin{enumerate}
  \item The function $K := \frac12\gamma(\phi,\phi)$ is a K\"ahler potential of the K\"ahler form $\omega$, i.e., $\omega = -\frac i2\partial\bar\partial K$.
  \item Let $M$ be a local affine special K\"ahler manifold given as a K\"ahlerian Lagrangian immersion $\phi: M \to\bC^{2n}$. Then the pullback of the canonical coordinates of $T^*\bC^{n}=\bC^{2n}$ gives functions $z^1,\ldots,z^n,w_1,\ldots,w_n : M\to\bC$ such that $\phi=(z,w):=(z^1,\ldots,z^n,w_1,\ldots,w_n)$. It can always be achieved that $z,w:M\to\bC^n$ are holomorphic coordinate systems by replacing $\phi$ with $ x\circ\phi$ for some $x\in\Sp(\bR^{2n})$ and restricting $M$ if necessary \cite{acd02}. In this case, we call $(z,w)$ a \emph{conjugate pair of special holomorphic coordinates}.
  \item Let $\phi=(z,w):M\to\bC^{2n}$ be a K\"ahlerian Lagrangian immersion of an affine special K\"ahler manifold given by a conjugate pair of special holomorphic coordinates $(z,w)$ and let $F: M\to\bC$ be a prepotential of $\phi$. Then we can identify $M \cong z(M)\subset\bC^{n}$ and $\phi$
    with $\diff F: M\to T^*M = \bC^{2n}$. In particular, $\phi(M) = \left\{ (z,w)\in\bC^{2n} \mid w_i = \frac{\partial F}{\partial z^i} \right\}$ is the graph of $\diff F$ over $M$. In this case, $M\subset\bC^{n}$ is called an \emph{affine special K\"ahler domain} and $K(p) = \sum_{i=1}^n\Im(\widebar z^i F_i)$ where $F_i := \frac{\partial F}{\partial z^i}$.
    \label{rem:item:askdomain}
  \end{enumerate}
 \end{rem}

 \begin{Def}
   A \emph{conical affine special K\"ahler manifold} $(\hat M,\hat J,\hat g,\hat\nabla,\xi)$ is an affine special K\"ahler manifold $(\hat M,\hat J,\hat g,\hat\nabla)$ and a vector field $\xi$ such that $\hat g(\xi,\xi) \neq 0$ and $\hat \nabla\xi=\hat D\xi=\Id$, where $\hat D$ is the Levi-Civita connection of $\hat g$.
 \end{Def}

   Note that contrary to \cite{cortes2012completeness}*{Definition 3} here we are not making any assumptions on the signature of the metric $\hat g$.

 A conical affine special K\"ahler manifold $\hat M$ of complex dimension $n+1$ locally admits K\"ahlerian Lagrangian immersions $\Phi: U \to \bC^{2n+2}$ that are equivariant with respect to the local $\bC^*$-action defined by $Z = \xi - iJ\xi$ and scalar multiplication on $\bC^{2n}$ \cite{acd02}. As a consequence, the function $\hat K := \frac12\hat g(Z,\widebar Z) = \hat g(\xi, \xi)$ is a globally defined K\"ahler potential of $\hat M$. Indeed, if $p\in U$ then $2\hat K = \hat g(Z,\widebar Z) = \hat \gamma(\Phi,\Phi)$, where $\hat\gamma$ is the standard Hermitian form of $\bC^{2n+2}$.

 If the vector field $Z$ generates a principal $\bC^*$-action then the symmetric tensor field
 \begin{equation}
   \label{eq:pskmetric}
   g' := -\frac {\hat g}{\hat K} + \frac{(\partial \hat K)(\bar\partial \hat K)}{\hat K^2}
 \end{equation}
 induces a K\"ahler metric $\widebar g$ on the quotient manifold $\widebar M := \hat M/\bC^*$, compare \cite{cds16}*{Proposition 2}. It follows that $\pi^*\widebar g = g'$ and $\pi^*\widebar\omega = \frac i2\partial\bar\partial\log|\hat K|$, where $\widebar\omega = \widebar g(\cdot, J\cdot)$ is the K\"ahler form of $\widebar M$.
 Set $\mathcal D := \Span\{\xi, J\xi\}$. Note that if $\hat K > 0$, then the signature of $\widebar g$ is minus the signature of $\hat g\vert_{\mathcal D^\perp}$, whereas if $\hat K < 0$ then the signature of $\widebar g$ agrees with the signature of $\hat g\vert_{\mathcal D^\perp}$.

 \begin{Def}
   The quotient $(\widebar M, \widebar g)$ is called a \emph{projective special K\"ahler manifold}.
 \end{Def}

 \begin{rem}
   Let $\Phi = (Z,W): M\to\bC^{2n+2}$ be an equivariant K\"ahlerian Langrangian immersion such that $(Z,W)$ is a conjugate pair of special holomorphic coordinates. Identify $M\cong Z(M)\subset \bC^{n+1}$. Then the prepotential $F: M\to\bC$ can be chosen to be homogeneous of degree 2 such that $\Phi = \diff F$.
 \end{rem}

\section{Symplectic group actions}
\subsection{Linear representation of the central extension of the affine symplectic group}
\label{sec:line-repr-centr}

Let $G= \Sp (\bR^{2n}) \ltimes \Heis_{2n+1}(\bR)$ be the extension of the real  Heisenberg group by the group of automorphisms $\Sp (\bR^{2n})$. The complexification of $G$ is the group $\Gc = \Sp(\bC^{2n}) \ltimes \Heis_{2n+1}(\bC)$ which contains $G$ as a real subgroup. Given two elements $x=(X,s,v)$ and $x'=(X',s',v')\in \Gc$, where $X, X' \in \Sp (\bC^{2n})$, $s, s' \in \bC =Z(G)$, $v, v' \in \bC^{2n}$, their product in $\Gc$ is given by
\begin{equation}
  \label{eq:Gmultiplication}
  x\cdot x' = \left( XX', s+s' + \frac12 \O (v,Xv') , Xv'+v\right),
\end{equation}
where $\O$ is the symplectic form on $\bC^{2n}$.

The group $\Gc$ is a central extension of the group $\Aff_{\Sp(\bC^{2n})}(\bC^{2n})$ of affine transformations of $\bC^{2n}$ with linear part in $\Sp(\bC^{2n})$. The two groups are related by the quotient homomorphism
\begin{equation}
\Gc \ra \Aff_{\Sp (\bC^{2n})}(\bC^{2n})= \Gc/Z(\Gc), \quad (X,s,v) \mapsto (X,v).\label{eq:defrhobar}
\end{equation}
This induces an affine representation $\bar{\rho}$ of $\Gc$ on $\bC^{2n}$ with image $\Aff_{\Sp(\bC^{2n})}(\bC^{2n})$ whose restriction to the real group $G$ has the image  $\bar\rho(G) = \Aff_{\Sp(\bR^{2n})}(\bR^{2n})$. In the complex symplectic vector space $\bC^{2n}$ we use standard coordinates $(z^1,\ldots ,z^n,w_1,\ldots ,w_n)$ in which the complex symplectic form is $\O = \sum dz^i\wedge dw_i$.

We will now show that $\bar\rho$ can be extended to a linear symplectic representation
\[ \rho: \Gc \ra \Sp(\bC^{2n+2}) \]
in the sense that the group $\rho (\Gc)$ preserves the affine hyperplane $\{ z^0=1\} \subset \bC^{2n+2}$ with respect to standard coordinates $(z^0,w_0, z^1\ldots , z^n,w_1,\ldots w_n)$ on $\bC^{2n+2}=\bC^2 \oplus \bC^{2n}$ and the distribution spanned by $\p_{w_0}$ inducing on the symplectic affine space $\{ z^0=1\}/\langle \p_{w_0}\rangle \cong \bC^{2n}$ the symplectic affine representation $\bar\rho$.

\begin{rem}
\label{rem:symplecticreductionexplained}
Notice that $\{ z^0=1\}/\langle \p_{w_0}\rangle$ is precisely the symplectic reduction of $\bC^{2n+2}$ with respect to the holomorphic Hamiltonian
group action generated by the vector field $\p_{w_0}$. The group $\rho (\Gc)\subset \Sp(\bC^{2n+2})$ preserves the Hamiltonian $z^0$ of that action and, hence,
$\rho$ induces a symplectic affine representation on the reduced space.  Similarly, we will consider the initial
real symplectic affine space $\bR^{2n}$ as the symplectic reduction of the real symplectic vector space $\bR^{2n+2}$ in the context of the real group $G$.
\end{rem}

\begin{Prop}
\label{prop:liftofaffinegroup}
\begin{enumerate}
\item[(i)] The map
\begin{eqnarray*}
&&x=\left( X,s,v\right)  \mapsto \rho (x) = \begin{pmatrix} 1&0&0\\
-2s&1&\hat{v}^t\\
v&0 &X
\end{pmatrix},\quad
 \hat{v} := X^t\O_0 v = \O_0 X^{-1} v,
  \end{eqnarray*}
  where $\O_0=\begin{pmatrix} 0&\id \\
    -\id &0\end{pmatrix}$ is the matrix representing the symplectic form on $\bC^{2n}$, defines a faithful linear symplectic representation $\rho : \Gc \ra  \Sp(\bC^{2n+2})$, which induces the affine symplectic representation $\bar\rho : \Gc \ra \Aff_{{\Sp (\bC^{2n})}}(\bC^{2n})$ in the sense explained above.
\item[(ii)] The image $\rho  (\Gc) \subset  \Sp(\bC^{2n+2})$ consists of the transformations in $\Sp(\bC^{2n+2})$ which preserve the hyperplane $\{ z^0=1\} \subset \bC^{2n+2}$ and the complex rank one distribution $\langle \p_{w_0}\rangle$. The image $\rho(G)\subset\Sp(\bR^{2n+2})\subset\Sp(\bC^{2n+2})$ is the group that additionally preserves the real structure of $\bC^{2n+2}$.
\end{enumerate}
\end{Prop}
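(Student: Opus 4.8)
The plan is to establish part (i) by verifying its four sub-claims — that $\rho$ is a group homomorphism, that it lands in $\Sp(\bC^{2n+2})$, that it is injective, and that it induces $\bar\rho$ — through direct $1{+}1{+}2n$ block-matrix computations, where the symplectic form on $\bC^{2n+2}$ is represented by $\hat\O_0 = \left(\begin{smallmatrix}0&1&0\\-1&0&0\\0&0&\O_0\end{smallmatrix}\right)$, the top-left block pairing $z^0$ with $w_0$. For the homomorphism property I would multiply $\rho(x)\rho(x')$ and compare with $\rho(x\cdot x')$, where $x\cdot x'$ is given by \eqref{eq:Gmultiplication}. Most blocks are immediate, and the $(3,1)$- and $(3,3)$-blocks reproduce $v+Xv'$ and $XX'$ at once. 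The content sits in two identities: for the $(2,3)$-block one needs $\widehat{Xv'+v} = \hat v' + X'^{t}\hat v$, which follows from $\hat v = X^t\O_0 v$ together with $X^t\O_0 X = \O_0$; and for the $(2,1)$-entry one needs $\hat v^{t} v' = -\O(v,Xv')$, so that the product's entry $-2s-2s'+\hat v^t v'$ equals $-2\bigl(s+s'+\tfrac12\O(v,Xv')\bigr)$. This second identity holds because $\hat v^{t} = v^t\O_0^{t}X = -v^t\O_0 X$; it is the step where the Heisenberg cocycle is reproduced, and the same manipulation yields $\hat v = X^t\O_0 v = \O_0 X^{-1} v$.

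Next I would check $\rho(x)^{t}\hat\O_0\rho(x) = \hat\O_0$ directly, using only $\O_0^t = -\O_0$, $X^t\O_0 X = \O_0$ and $\hat v = X^t\O_0 v$: every off-diagonal term either vanishes by antisymmetry or cancels against a $\hat v$-term, and the bottom-right $2n\times 2n$ block reduces to $X^t\O_0 X = \O_0$. Injectivity is immediate, since $\rho(x) = \id$ forces $v = 0$, $X = \id$, $s = 0$ by inspection. For the induced representation: the first row $(1,0,0)$ of $\rho(x)$ says $z^0\circ\rho(x) = z^0$, and the second column $(0,1,0)^t$ says $\rho(x)\p_{w_0} = \p_{w_0}$; hence $\rho(x)$ preserves the affine hyperplane $\{z^0=1\}$ and the line $\langle\p_{w_0}\rangle$, descends to the symplectic reduction $\{z^0=1\}/\langle\p_{w_0}\rangle\cong\bC^{2n}$, and on the bottom block acts there as $u\mapsto Xu+v = \bar\rho(x)(u)$, which is the affine symplectic representation $\bar\rho$. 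This proves (i).

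For part (ii), let $A\in\Sp(\bC^{2n+2})$ preserve $\{z^0=1\}$ and $\langle\p_{w_0}\rangle$. A linear functional vanishing on the affine hyperplane $\{z^0=1\}$ vanishes identically (test it on $e_{z^0}$ and on $e_{z^0}+u$ with $u$ in the linear hyperplane $\{z^0=0\}$); applying this to $z^0\circ A - z^0$ gives $z^0\circ A = z^0$, i.e.\ the first row of $A$ is $(1,0,0)$. Preservation of $\langle\p_{w_0}\rangle$ gives $A\p_{w_0} = \mu\,\p_{w_0}$ with $\mu\in\bC^*$, and then $1 = \hat\O_0(e_{z^0},\p_{w_0}) = \hat\O_0(Ae_{z^0},A\p_{w_0}) = \mu\,\hat\O_0(Ae_{z^0},\p_{w_0}) = \mu$, since the $z^0$-component of $Ae_{z^0}$ is $1$ and $\bC^{2n}$ is $\hat\O_0$-orthogonal to $\langle e_{z^0},\p_{w_0}\rangle$; thus the second column of $A$ is $(0,1,0)^t$. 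Writing $A = \left(\begin{smallmatrix}1&0&0\\a&1&b^t\\c&0&D\end{smallmatrix}\right)$ and imposing $A^t\hat\O_0 A = \hat\O_0$ forces $D^t\O_0 D = \O_0$ and $b = D^t\O_0 c$ while leaving $a\in\bC$ free, i.e.\ $A = \rho\bigl((D,-a/2,c)\bigr)$; so $\rho(\Gc)$ is exactly the indicated subgroup. Finally, $\rho((X,s,v))$ has real entries if and only if $X,s,v$ are real, i.e.\ if and only if $(X,s,v)\in G$, so by injectivity $\rho(G) = \rho(\Gc)\cap\Sp(\bR^{2n+2})$ is precisely the part of $\rho(\Gc)$ preserving the standard real structure, as claimed.

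The only genuinely delicate points are the cocycle identity $\hat v^t v' = -\O(v,Xv')$ that matches the $(2,1)$-entry to the Heisenberg multiplication law, and — in (ii) — the observation that a \emph{linear} map preserving the \emph{affine} hyperplane $\{z^0=1\}$ must fix the linear functional $z^0$; everything else is routine $2\times 2$ block bookkeeping.
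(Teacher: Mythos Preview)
Your proof is correct and follows essentially the same approach as the paper's: direct block-matrix verification of the homomorphism property, symplecticity, faithfulness, and the characterization of the image. The only structural difference is one of ordering---the paper first characterizes which matrices in $\GL(2n+2,\mathbb{K})$ preserve $\{z^0=1\}$ and $\langle\partial_{w_0}\rangle$ and then imposes symplecticity (thereby proving (ii) and the symplecticity part of (i) in one stroke), whereas you verify (i) directly and then deduce (ii); the underlying identities (in particular $\hat v^t v' = -\Omega(v,Xv')$ and $\widehat{Xv'+v} = X'^t\hat v + \hat v'$) are the same.
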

\pf
We first observe that, for $\mathbb K\in\{\bR,\bC\}$, an element of $\GL (2n+2,\mathbb K)$ preserves $\{ z^0=1\}$ and $\langle \p_{w_0} \rangle$ if and only if it is of the form
\[
  \begin{pmatrix}
    1&0&0\\
    -2s&c&w^t\\
    v&0 &X
  \end{pmatrix},
\]
where $s\in \mathbb K$, $0\neq c\in \mathbb K$, $v, w\in \mathbb K^{2n}$, and $X\in \GL (2n,\mathbb K)$. One then checks that such a  transformation is symplectic if and only if $X\in \Sp ({\mathbb K}^{2n})$, $c=1$, and $w=\hat{v}$. Clearly an element in $\GL(2n, \mathbb K)$ preserves the real structure of $\bC^{2n}$ if and only if $\mathbb K = \bR$. This proves (ii) and shows that the linear transformation $\rho (x)$ induces the affine transformation $\bar{\rho} (x)\in \Aff_{{\Sp (\mathbb \bC^{2n})}}(\mathbb \bC^{2n})$ for all $x\in \Gc$.

 To check that $\rho$ is a representation we put $\mu (x) := -2s$, $\g (x) := \hat{v} =  X^t\O_0 v$. Then we compute \[ \mu (xx')= \mu (x) + \mu (x') -\o (v,Xv') = \mu (x) + \mu (x') + \hat{v}^tv',\]
which coincides with the matrix element of $\rho (x) \rho (x')$ in the second row and first column.
Next we compute the column vector
\[
  \g (xx') = (XX')^t\O_0 (v+Xv')
  = (X')^t(\g (x) + \O_0 v') = (X')^t\g (x) + \g (x'),
\]
the entries of which coincide with the last $2n$ entries of the second row of $\rho (x) \rho (x')$.
From these properties one sees immediately that $\rho$ is a representation. It is obviously faithful, since $X$, $s$, and $v$ appear in the matrix $\rho (x)$.
\epf

We define the subgroup $\Gsk = \Sp(\bR^{2n}) \ltimes \Heis_{2n+1}(\bC)\subset \Gc$ to be the extension of the complex Heisenberg group by $\Sp(\bR^{2n})$. It contains the real group $G$ as a subgroup and is a central extension of the affine group $\bar\rho(\Gsk) = \Aff_{\Sp(\bR{^{2n}})}(\bC^{2n})$. We will show that $\Gsk$ acts on pairs $(\phi, F)$ of K\"ahlerian Lagrangian immersions and prepotentials. This gives a transformation formula, see Eq.~\eqref{eq:prepottransform}, of prepotentials of affine special K\"ahler manifolds which generalizes de Wit's formula (9) in \cite{dewit} from the case of linear to affine symplectic transformations.

\subsection{Representation of \texorpdfstring{$G_\bC$}{Gc}  on Lagrangian pairs}
\label{sec:repr-g-lagr}
Let $\Lcal\subset\bC^{2n}$ be a Lagrangian submanifold and denote by $\eta$ be the canonical $\Sp(\bR^{2n})$-invariant 1-form given by $\eta_q := \Omega(q,\cdot)$, for $q\in\bC^{2n}$. In Darboux coordinates $(z^1,\ldots,z^n,\allowbreak w_1,\ldots,w_n)$ we can write $\eta$ as $\eta = \sum z^i\diff w_i - w_i\diff z^i$. Since $\diff\eta = 2\Omega$, this form is closed when restricted to $\Lcal$.

\begin{Def}
\label{def:lagrangianpotentialpair}
  We call a function $f: \Lcal\to\bC$ a \emph{Lagrangian potential} of
  $\Lcal$ if $\diff f = -\eta\vert_{\Lcal}$ and a pair
  $(\mathcal L, f)$ a \emph{Lagrangian pair} if $\Lcal\subset\bC^{2n}$
  is a Lagrangian submanifold and $f$ is a Lagrangian potential of
  $\Lcal$.
\end{Def}

\begin{Prop}
  \label{prop:Gactiononpairs}
  The group $\Gc$ acts on the set of pairs $(\Lcal, f)$, where $\Lcal\subset\bC^{2n}$ is a Lagrangian submanifold and $f$ is a holomorphic function on $\Lcal$. The action is defined as follows. Given $x=(X,s,v)\in \Gc$ and a pair $(\Lcal, f)$ as above, we define
  \begin{equation}
    \label{eq:Gactiondef}
    x\cdot (\Lcal, f) := (x\Lcal, x\cdot f),
  \end{equation}
  where $x\Lcal := \bar\rho(x)\Lcal$ and $x\cdot f$ is function on $x\Lcal$ defined as
  \begin{equation}
    \label{eq:transformoflagrangianpotential}
    x\cdot f := f\circ\inv x + \Omega(\cdot, v) - 2s.
  \end{equation}
  Moreover, if $f$ is a Lagrangian potential of $\Lcal$, then $x\cdot f$ is a Lagrangian potential of $x\mathcal L$.
\end{Prop}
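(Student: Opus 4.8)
The plan is to verify three things: (a) $\bar\rho$ really defines a left action on Lagrangian submanifolds, (b) the formula \eqref{eq:transformoflagrangianpotential} gives a left action on holomorphic functions compatibly with the action on submanifolds, and (c) the Lagrangian-potential condition $\diff f = -\eta|_{\mathcal L}$ is preserved. Part (a) is immediate since $\bar\rho$ is a group homomorphism into $\Aff_{\Sp(\bC^{2n})}(\bC^{2n})$ and affine transformations carry Lagrangian submanifolds to Lagrangian submanifolds (the symplectic form $\Omega$ is $\Sp(\bC^{2n})$-invariant and translation-invariant). The only subtlety worth stating is that $x \cdot f$ is genuinely a function on $x\mathcal L = \bar\rho(x)\mathcal L$: since $f \circ \inv x$ is defined on $\bar\rho(x)\mathcal L$ and the extra terms $\Omega(\cdot, v) - 2s$ are globally defined on $\bC^{2n}$, the sum makes sense there.

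For (b) I would compute $x' \cdot (x \cdot f)$ and compare with $(x'x) \cdot f$, writing $x = (X,s,v)$, $x' = (X',s',v')$, and recalling from \eqref{eq:Gmultiplication} that $x'x = (X'X,\, s'+s+\tfrac12\Omega(v',X'v),\, X'v+v')$. Here it is cleanest to recall that $\bar\rho(x)$ acts by $q \mapsto Xq + v$, so $\bar\rho(x)^{-1}q = X^{-1}(q-v)$, and to use bilinearity and $\Sp$-invariance of $\Omega$: $\Omega(X' \cdot , X'v) = \Omega(\cdot, v)$. Expanding,
\begin{align*}
x' \cdot (x\cdot f) &= (x\cdot f)\circ \inv{x'} + \Omega(\cdot, v') - 2s'\\
&= f\circ \inv x \circ \inv{x'} + \Omega(\inv{x'}\,\cdot\,, v) - 2s + \Omega(\cdot, v') - 2s',
\end{align*}
and since $\inv{x'}\cdot = \bar\rho(x')^{-1} = \bar\rho(x'^{-1})$ one has $\Omega(\bar\rho(x'^{-1})q, v) = \Omega(q - v', X'v) = \Omega(q, X'v) - \Omega(v', X'v)$. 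Collecting terms gives exactly $f \circ \bar\rho(x'x)^{-1} + \Omega(\cdot, X'v + v') - 2(s' + s + \tfrac12\Omega(v', X'v)) = (x'x)\cdot f$, which is the cocycle identity matching \eqref{eq:Gmultiplication}. (One should also note $e \cdot f = f$, trivially.)

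Part (c) is the last step and the one requiring a little care with the transformation law of $\eta$. The key identity is $\bar\rho(x)^*\eta = \eta + \Omega(\cdot, v) \cdot 2$ up to an exact term coming from the translation — more precisely, for the translation-by-$v$ map $t_v$ one computes $t_v^*\eta_q = \Omega(q+v,\cdot) = \eta_q + \Omega(v,\cdot) = \eta_q - \diff\big(\Omega(\cdot,v)\big)$ using $\diff\eta = 2\Omega$ appropriately; and for $X \in \Sp$, invariance gives $X^*\eta = \eta$. Thus on $x\mathcal L$, pulling $\diff(x\cdot f)$ back by $\bar\rho(x)$ and using $\diff f = -\eta|_{\mathcal L}$, the correction term $\diff(\Omega(\cdot,v))$ precisely cancels the discrepancy between $\eta|_{x\mathcal L}$ pulled back and $\eta|_{\mathcal L}$, yielding $\diff(x\cdot f) = -\eta|_{x\mathcal L}$.

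The main obstacle is purely bookkeeping: getting the signs and the factor $\tfrac12$ in the Heisenberg cocycle to come out consistent with the chosen normalization $\diff\eta = 2\Omega$ and $\eta_q = \Omega(q,\cdot)$. In particular one must track whether $\Omega(\cdot,v)$ or $\Omega(v,\cdot)$ appears at each stage, since these differ by a sign, and make sure the $-2s$ term (rather than $+2s$ or $-s$) is forced by the product \eqref{eq:Gmultiplication}. I expect no conceptual difficulty beyond this; once the normalizations are pinned down, (b) and (c) are a few lines each.
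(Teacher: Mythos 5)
Your proposal is correct and follows essentially the same route as the paper: check the identity, verify the composition law by expanding both sides and using $\Sp$-invariance of $\Omega$ together with the Heisenberg product \eqref{eq:Gmultiplication}, and then check $\diff(x\cdot f)=-\eta\vert_{x\Lcal}$ by direct differentiation. The only cosmetic remark is that in part (c) you do not actually need $\diff\eta=2\Omega$; the differential of the linear function $q\mapsto\Omega(q,v)$ is simply the constant $1$-form $\Omega(\cdot,v)$, which is all the computation uses.
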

\begin{proof}
  For the neutral element $e\in \Gc$, clearly $e\cdot(\Lcal, f) = (\Lcal, f)$.
  Let $q\in\Lcal$ and $x, x' \in \Gc$ with $x=(X,s,v)$ and $x'=(X',s',v')$. Then
  \begin{align}
    x\cdot(x'\cdot f)(xx'q)
    &=
      (x'\cdot f)(x'q) + \Omega(xx'q,v) - 2s \\
    &=
      f(q) + \Omega(x'q,v') + \Omega(xx'q, v) - 2s - 2s' \\
    &=
      f(q) + \Omega(xx'q, v + Xv') - 2\left(s+s'+\frac12\Omega(v,Xv')\right) \\
    &=
      (xx')\cdot f(xx'q),
  \end{align}
  where we have used the second-to-last equation that
  \begin{align}
  \Omega(x'q,v')
    &=
      \Omega(Xx'q, Xv') \\
    &=
      \Omega(xx'q-v, Xv') \\
    &=
      \Omega(xx'q, Xv') - \Omega(v,Xv').
  \end{align}
  This shows that Eq.~\eqref{eq:Gactiondef} defines an action of $\Gc$. Now let $f$ be a Lagrangian potential of $\Lcal$ and set $\tilde q = xq$. Then
  \begin{align}
    \diff_{\tilde q}(x\cdot f)
    &=
      \diff_q f \circ d(\inv x) + \diff_{\tilde q}(\Omega(\cdot, v)) \\
    &=
      -\eta_q\circ\inv X + \Omega(\cdot, v) \\
    &=
      -\Omega(q, \inv X\cdot) + \Omega(\cdot, v) \\
    &=
      -\Omega(Xq + v, \cdot) = -\eta_{\tilde q},
  \end{align}
  hence, $x\cdot f$ is a Lagrangian potential of $x\cdot\Lcal$.
\end{proof}

\begin{Def}
  We call a Lagrangian submanifold $\Lcal\subset\bC^{2n}$ \emph{K\"ahlerian} if the Hermitian form $\gamma = \sqrt{-1}\Omega(\cdot, \tau\cdot)$ is non-degenerate when restricted to $\Lcal$. Similarly, a Lagrangian pair $(\Lcal, f)$ is called \emph{K\"ahlerian} if $\Lcal$ is K\"ahlerian.
\end{Def}

\begin{Lem}
  A Lagrangian submanifold $\Lcal\subset\bC^{2n}$ is K\"ahlerian if and only if $\Lcal$ is totally complex, i.e., $T_q\Lcal \cap \tau T_q\Lcal = \{0\}$ for all $q\in\Lcal$.
\end{Lem}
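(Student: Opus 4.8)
The plan is to read the statement as the specialisation of \Cref{prop:kaehlerianifftotallycomplex}(2) to the inclusion map, and I would present that as the proof. Let $\iota:\Lcal\hookrightarrow\bC^{2n}$ be the inclusion. Since $\Lcal$ is a complex submanifold, $\iota$ is a holomorphic immersion (indeed an embedding), so each differential $\diff\iota_q:T_q\Lcal\to\bC^{2n}$ is injective with image $T_q\Lcal$ regarded as a complex subspace of $\bC^{2n}$. The hypothesis that $\Lcal$ is Lagrangian says exactly $\iota^*\Omega=0$, i.e.\ $\iota$ is a Lagrangian immersion in the sense of \Cref{def:kaehlerianlagrangiantotallycomplex}. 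By the same definition, $\iota$ is K\"ahlerian iff $\iota^*\gamma$ is non-degenerate, which is literally the condition that $\gamma\vert_\Lcal$ be non-degenerate, i.e.\ that $\Lcal$ be K\"ahlerian; and $\iota$ is totally complex iff $\diff\iota_q(T_q\Lcal)\cap\tau\diff\iota_q(T_q\Lcal)=\{0\}$, i.e.\ $T_q\Lcal\cap\tau T_q\Lcal=\{0\}$ for all $q$, because $\diff\iota_q$ is injective with image $T_q\Lcal$. Applying \Cref{prop:kaehlerianifftotallycomplex}(2) to the Lagrangian immersion $\iota$ therefore yields the asserted equivalence verbatim; part (1) of that proposition is not needed.

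If one prefers a self-contained argument, the same fact reduces to pointwise linear algebra that I would run as follows. Fix $q\in\Lcal$ and set $V:=T_q\Lcal\subset\bC^{2n}$; then $V$ is a complex subspace with $\dim_\bC V=n$ and $\Omega\vert_V=0$. Because $\tau$ is an antilinear involution satisfying $\Omega(\tau x,\tau y)=\overline{\Omega(x,y)}$ in the standard coordinates, $\tau V$ is again a complex subspace of dimension $n$ and is isotropic, hence a complex Lagrangian subspace, so it coincides with its own $\Omega$-orthogonal complement, $(\tau V)^{\perp_\Omega}=\tau V$. Now the radical of the Hermitian form $\gamma\vert_V$ consists of those $v\in V$ with $\Omega(v,\tau w)=0$ for all $w\in V$, that is, of $v\in V\cap(\tau V)^{\perp_\Omega}=V\cap\tau V$. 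Hence $\gamma\vert_V$ is non-degenerate if and only if $V\cap\tau V=\{0\}$; letting $q$ vary gives the claim. I would include this second argument only if a referee wants the proof independent of \cite{acd02}.

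There is essentially no obstacle here: the only thing to be careful about is bookkeeping, namely that "K\"ahlerian" and "totally complex" for the immersion $\iota$ match the newly defined "K\"ahlerian" and "totally complex" for the submanifold $\Lcal$ on the nose — which holds precisely because $\iota$ is an embedding, so that $\diff\iota_q$ identifies $T_q\Lcal$ with the subspace $T_q\Lcal\subset\bC^{2n}$ and commutes with the relevant pullbacks of $\Omega$ and $\gamma$. All the substantive content is already contained in \Cref{prop:kaehlerianifftotallycomplex} (or, at the linear-algebra level, in the identification of the radical of $\gamma\vert_V$ with $V\cap\tau V$).
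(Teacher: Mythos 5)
Your first paragraph is exactly the paper's proof: the paper simply notes that the inclusion $\iota:\Lcal\to\bC^{2n}$ is a holomorphic Lagrangian immersion and invokes Proposition~\ref{prop:kaehlerianifftotallycomplex}. The optional self-contained linear-algebra argument is correct but not needed; the main route matches the paper verbatim.
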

\begin{proof}
  Since the inclusion $\iota:\Lcal\to\bC^{2n}$ is a holomorphic Lagrangian immersion, the statement follows from Prop.~\ref{prop:kaehlerianifftotallycomplex}.
\end{proof}

\begin{Cor}
\label{cor:klpaction}
  The group $\Gsk\subset\Gc$ acts on the set of K\"ahlerian Lagrangian pairs.
\end{Cor}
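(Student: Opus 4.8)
The plan is to deduce the corollary from the action of $\Gc$ on Lagrangian pairs already established in Proposition~\ref{prop:Gactiononpairs}, together with the characterization of the Kählerian condition in terms of total complexity in the Lemma immediately preceding the corollary. Since $\Gsk$ is a subgroup of $\Gc$, restricting the $\Gc$-action to $\Gsk$ already gives an action of $\Gsk$ on the set of \emph{all} Lagrangian pairs $(\Lcal, f)$ (Lagrangianity of $x\Lcal = \bar\rho(x)\Lcal$ and the Lagrangian-potential property of $x\cdot f$ are part of Proposition~\ref{prop:Gactiononpairs}, as $\bar\rho(x)\in\Aff_{\Sp(\bC^{2n})}(\bC^{2n})$ preserves $\Omega$). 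Hence the only thing left to check is that this action preserves the subset of \emph{Kählerian} Lagrangian pairs, i.e.\ that $x\Lcal$ is Kählerian whenever $\Lcal$ is and $x\in\Gsk$.

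For this I would argue as follows. Write $x=(X,s,v)\in\Gsk$; by definition of $\Gsk = \Sp(\bR^{2n})\ltimes\Heis_{2n+1}(\bC)$ the linear part $X$ lies in $\Sp(\bR^{2n})$, hence is a \emph{real} matrix, while $v\in\bC^{2n}$ is allowed to be complex. The map $\bar\rho(x):\bC^{2n}\to\bC^{2n}$ is the complex-affine map $q\mapsto Xq+v$, whose differential at every point is the constant linear map $X$; thus $T_{\bar\rho(x)q}(x\Lcal) = X\,T_q\Lcal$ for $q\in\Lcal$. Because $X$ is real it commutes with the standard real structure $\tau$, so $\tau X\,T_q\Lcal = X\tau\,T_q\Lcal$, and therefore
\[
  T_{\bar\rho(x)q}(x\Lcal)\cap \tau\,T_{\bar\rho(x)q}(x\Lcal) \;=\; X\bigl(T_q\Lcal\cap \tau\,T_q\Lcal\bigr).
\]
Since $X$ is invertible, the left-hand side vanishes at every point of $x\Lcal$ if and only if $T_q\Lcal\cap\tau T_q\Lcal=\{0\}$ for all $q\in\Lcal$; that is, $x\Lcal$ is totally complex iff $\Lcal$ is. By the Lemma, a Lagrangian submanifold is Kählerian iff it is totally complex, so $x\Lcal$ is Kählerian iff $\Lcal$ is, which completes the verification.

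I do not expect a serious obstacle: the proof is essentially a one-line combination of Proposition~\ref{prop:Gactiononpairs} with the Lemma. The single point that must be handled with care is that membership in $\Gsk$ forces the linear part $X$ to be real — this is precisely why $\Gsk$, rather than the larger group that acts on arbitrary Lagrangian pairs, is the natural group acting on \emph{Kählerian} Lagrangian pairs — and that the (possibly complex) translation part $v$, being annihilated by differentiation, is irrelevant to the total-complexity computation. If desired, I would also remark that the same argument applied to $x\notin\Gsk$ (with non-real $X$) shows that the larger group genuinely fails to preserve the Kählerian condition, explaining why the restriction to $\Gsk$ is needed.
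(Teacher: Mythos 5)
Your proof is correct and takes essentially the same route as the paper: both come down to the fact that for $x\in\Gsk$ the linear part of $\bar\rho(x)$ lies in $\Sp(\bR^{2n})$ (hence is real and symplectic) while the complex translation is irrelevant at the level of tangent spaces. The paper phrases the conclusion as invariance of the Hermitian form $\gamma=\sqrt{-1}\Omega(\cdot,\tau\cdot)$ rather than via the total-complexity characterization you use, but the two are equivalent by the Lemma immediately preceding the corollary.
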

\begin{proof} The group $\Gsk$ acts on $\bC^{2n}$ as the group $\bar\rho(\Gsk) = \Aff_{\Sp(\bR^{2n})}(\bC^{2n})$ which is the affine linear group that leaves invariant the complex symplectic form $\Omega$ and the real structure $\tau$ and, hence, also the Hermitian form $\gamma = \sqrt{-1}\Omega(\cdot,\tau\cdot)$. This shows that if $(\Lcal, f)$ is a K\"ahlerian Lagrangian pair, then $x\cdot(\Lcal, F) = (\bar\rho(x)\Lcal, x\cdot f)$ is again a K\"ahlerian Lagrangian pair for all $x\in\Gsk$.
\end{proof}

\subsection{Representation of \texorpdfstring{$\Gsk$}{Gsk} on special K\"ahler pairs}

\begin{Def}
  Let $(M, J, g, \nabla)$ be a connected affine special K\"ahler manifold of complex dimension $n$ and let $U\subset M$ be an open subset of $M$. We call a pair $(\phi, F)$ a \emph{special K\"ahler pair} of $U$ if $\phi: U\to\bC^{2n}$ is a K\"ahlerian Lagrangian immersion inducing on $U$ the restriction of the special K\"ahler structure $(J, g, \nabla)$ and $F$ is a prepotential of $\phi$. We denote the set of special K\"ahler pairs of $U$ by $\Fcal(U)$.
\end{Def}

The following Lemma shows how the notions of prepotentials and Lagrangian potentials are related.

\begin{Lem}\label{lem:lagpotprepotcorrespondence}
  Let $M$ be a special K\"ahler manifold together with a K\"ahlerian Lagrangian embedding $\phi: M \to \phi(M)\subset\bC^{2n}$ inducing the special K\"ahler structure of $M$. Set $\Lcal := \phi(M)$ and $(z,w) := \phi$. Then a Lagrangian potential $f$ of $\Lcal$ defines a prepotential $F$ of $\phi$ via
  \begin{equation}\label{eq:lagpotprepotcorrespondence}
    F = \frac12(\phi^* f + z^t w),
  \end{equation}
  and vice versa.
\end{Lem}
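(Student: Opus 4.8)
The plan is to compute the exterior derivative of $F = \frac12(\phi^* f + z^t w)$ and show it equals $\phi^*\lambda = \sum_i w_i\,\diff z^i$, and then argue the converse by reversing the algebra. First I would recall that $\eta = \sum_i (z^i\,\diff w_i - w_i\,\diff z^i)$ in Darboux coordinates and that the Liouville form is $\lambda = \sum_i w_i\,\diff z^i$, so that $\eta = \sum_i z^i\,\diff w_i - \lambda = \diff(z^t w) - 2\lambda$ as forms on $\bC^{2n}$, where $z^t w = \sum_i z^i w_i$. Pulling back along $\phi$ and using that $f$ is a Lagrangian potential, i.e.\ $\diff f = -\eta|_\Lcal$, gives $\phi^*\diff f = -\phi^*\eta|_\Lcal = -\diff(\phi^*(z^t w)) + 2\phi^*\lambda$. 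Hence $\diff(\phi^* f) = -\diff(z^t w) + 2\,\phi^*\lambda$ (identifying $z^t w$ with its pullback $\phi^*(z^t w)$ via $(z,w) = \phi$), so $\diff F = \frac12\big(\diff(\phi^* f) + \diff(z^t w)\big) = \phi^*\lambda$, which is exactly the defining equation of a prepotential in \Cref{def:prepotential}.

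For the converse, given a prepotential $F$ of $\phi$ one simply sets $f := 2F - z^t w$ viewed as a function on $\Lcal \cong M$ (using that $\phi$ is an embedding so $\phi^*$ is invertible on functions), and the identical computation read backwards shows $\diff f = 2\,\diff F - \diff(z^t w) = 2\phi^*\lambda - \diff(z^t w) = -\eta|_\Lcal$, so $f$ is a Lagrangian potential. One should also note that the two assignments $f \mapsto F$ and $F \mapsto f$ are mutually inverse, which is immediate from the formula $2F = \phi^* f + z^t w$ solved either way.

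The only genuinely careful point—and the step I expect to be the main bookkeeping obstacle rather than a conceptual one—is the identity $\eta = \diff(z^t w) - 2\lambda$ at the level of forms, together with the compatibility of the various pullbacks: $\eta$ and $\lambda$ live on $\bC^{2n}$ while $f$ lives on $\Lcal$ and $F$ on $M$, and the embedding $\phi: M \xrightarrow{\sim} \Lcal \subset \bC^{2n}$ is used to move freely between them. Concretely, $\diff(z^i w_i) = w_i\,\diff z^i + z^i\,\diff w_i$, so $\sum_i \diff(z^i w_i) = \lambda + \sum_i z^i\,\diff w_i$, whence $\eta = \sum_i z^i\,\diff w_i - \lambda = \diff(z^t w) - 2\lambda$; this is the one line that makes the factor of $\frac12$ and the term $z^t w$ in \eqref{eq:lagpotprepotcorrespondence} appear naturally. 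Everything else is a direct substitution, and no use of the K\"ahler (totally complex) hypothesis is needed for the correspondence itself—it is only needed to know that prepotentials and Lagrangian potentials exist, i.e.\ that the relevant closed forms are exact, which is already built into the hypotheses via \Cref{def:prepotential} and \Cref{def:lagrangianpotentialpair}.
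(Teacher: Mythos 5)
Your proof is correct and follows essentially the same route as the paper: both compute $\diff F=\frac12(\phi^*\diff f+\diff(z^tw))$, substitute $\diff f=-\eta|_{\Lcal}$, and cancel terms to obtain $\phi^*\lambda=w^t\diff z$, with the converse handled by inverting $\phi^*$ since $\phi$ is an embedding. Your packaging of the cancellation as the form identity $\eta=\diff(z^tw)-2\lambda$ is just a tidier rephrasing of the paper's coordinate computation.
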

\begin{proof}
  Let $f$ be a Lagrangian potential of $\Lcal$. We compute
  \begin{align}
    \diff F
    &= \frac12(\phi^*\diff f + \diff(z^t w)) \\
    &= \frac12(-\phi^*\eta + w^t \diff z + z^t \diff w) \\
    &= \frac12(w^t\diff z - z^t\diff w + w^t\diff z + z^t \diff w) \\
    &= w^t \diff z.
  \end{align}
  Since $\phi$ is a biholomorphism onto its image, the converse follows easily.
\end{proof}

\begin{Prop}
\label{prop:gactiononprepotentials}
Let $M$ be a connected affine special K\"ahler manifold of complex dimension $n$ and $U\subset M$ an open subset such that $\Fcal(U) \neq \emptyset$. Then the group $\Gsk$ acts simply transitively on $\Fcal(U)$. The action is defined as follows. Given $x=(X,s,v)\in \Gsk$ and a special K\"ahler pair $(\phi, F)$ of $U$, we define
  \begin{equation}
    \label{eq:Gactionprepotdef}
    x\cdot(\phi, F) := (x\phi, x\cdot F),
  \end{equation}
  where $x\phi:=\bar\rho(x)\circ\phi$ and
  \begin{equation}
    \label{eq:prepottransform}
    x\cdot F := F - \frac12z^tw + \frac12z'^tw' + \frac12(x\phi)^*\Omega\left(\cdot,v\right) - s,
  \end{equation}
  where $(z,w):=\phi$ and $(z',w'):=x\phi$ are the components of $\phi$ and $x\phi$, respectively.
\end{Prop}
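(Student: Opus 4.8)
The plan is to verify in two stages that \eqref{eq:prepottransform} defines an action of $\Gsk$ on $\Fcal(U)$, and then to deduce simple transitivity by comparing this action, through the central extension $1\to\bC\to\Gsk\xrightarrow{\bar\rho}\Aff_{\Sp(\bR^{2n})}(\bC^{2n})\to 1$ (with $\bC\hookrightarrow\Gsk$ given by $s\mapsto(\id,s,0)$), with the simply transitive action of $\Aff_{\Sp(\bR^{2n})}(\bC^{2n})$ on K\"ahlerian Lagrangian immersions. Throughout I assume $U$ connected. First I would note that, since $\bar\rho(x)\in\Aff_{\Sp(\bR^{2n})}(\bC^{2n})$, the map $x\phi=\bar\rho(x)\circ\phi$ is again a K\"ahlerian Lagrangian immersion inducing the given special K\"ahler structure on $U$ (\Cref{prop:askimmersion} and the remark following it). That $x\cdot F$ is a prepotential of $x\phi$, i.e.\ $\diff(x\cdot F)=(x\phi)^*\lambda$, is a local assertion, so I would work on an open set where $\phi$ is an embedding with image the Lagrangian submanifold $\Lcal=\phi(U)$: \Cref{lem:lagpotprepotcorrespondence} then provides a Lagrangian potential $f$ of $\Lcal$ with $\phi^*f=2F-z^tw$; \Cref{prop:Gactiononpairs} (cf.\ \Cref{cor:klpaction}) gives that $x\cdot f$ is a Lagrangian potential of $x\Lcal=(x\phi)(U)$; and \Cref{lem:lagpotprepotcorrespondence} applied to $x\phi$ gives that $\tfrac12\big((x\phi)^*(x\cdot f)+z'^tw'\big)$ is a prepotential of $x\phi$. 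Since $(x\phi)^*(x\cdot f)=\phi^*f+(x\phi)^*\Omega(\cdot,v)-2s$ by \eqref{eq:transformoflagrangianpotential}, substituting $\phi^*f=2F-z^tw$ reproduces exactly the formula \eqref{eq:prepottransform} for $x\cdot F$. (Alternatively $\diff(x\cdot F)=(x\phi)^*\lambda$ can be checked directly using $\phi^*\eta=z^t\diff w-w^t\diff z$, its analogue for $x\phi$, and the $\Sp(\bR^{2n})$-invariance of $\Omega$.)

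Next I would check the action axioms. The identity $e\cdot(\phi,F)=(\phi,F)$ is immediate. The relation $(xx')\cdot(\phi,F)=x\cdot(x'\cdot(\phi,F))$ splits into $(xx')\phi=x(x'\phi)$, which holds since $\bar\rho$ is a homomorphism, and the equality of functions $(xx')\cdot F=x\cdot(x'\cdot F)$ on $U$; the latter I would reduce (on an open set where $\phi$ embeds) to the corresponding identity for Lagrangian potentials already established in \Cref{prop:Gactiononpairs}, or verify directly from the group law \eqref{eq:Gmultiplication} --- the crucial step being $((xx')\phi)^*\Omega(\cdot,Xv')=(x'\phi)^*\Omega(\cdot,v')+\Omega(v,Xv')$, which follows from $(xx')\phi=X\cdot(x'\phi)+v$ and $X\in\Sp(\bR^{2n})$. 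For \emph{simple transitivity}, the kernel of $\bar\rho|_{\Gsk}$ is the central subgroup $Z=\{(\id,s,0)\mid s\in\bC\}$, acting by $(\id,s,0)\cdot(\phi,F)=(\phi,F-s)$; since on connected $U$ any two prepotentials of a fixed K\"ahlerian Lagrangian immersion differ by an additive constant, $Z$ acts simply transitively on every nonempty fibre of the projection $\Fcal(U)\to\{\text{K\"ahlerian Lagrangian immersions of }U\text{ inducing the structure}\}$, $(\phi,F)\mapsto\phi$. On the base, $\Aff_{\Sp(\bR^{2n})}(\bC^{2n})=\bar\rho(\Gsk)$ acts simply transitively: transitivity and uniqueness up to $\Aff$ follow from \cite{acd02}/\Cref{prop:askimmersion} (passing to the universal cover and descending in case $U$ is not simply connected), and the stabiliser of such an immersion is trivial because its real part is an immersion, hence has open image in $\bR^{2n}$.

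Putting the pieces together: given $(\phi_1,F_1),(\phi_2,F_2)\in\Fcal(U)$, I would pick $x_0\in\Gsk$ with $x_0\phi_1=\phi_2$, observe that $c:=x_0\cdot F_1-F_2$ is a constant (both being prepotentials of $\phi_2$), and take $x:=(\id,c,0)\cdot x_0$, so $x\cdot(\phi_1,F_1)=(\phi_2,F_2)$; and if $x\cdot(\phi,F)=(\phi,F)$ then $\bar\rho(x)\circ\phi=\phi$ forces $\bar\rho(x)=\Id$, hence $x\in Z$, hence $x\cdot F=F-s=F$ gives $s=0$ and $x=e$. The computations in the first two steps are routine but need some care with the various pullbacks and signs (in particular in showing $x\cdot F$ is a prepotential and in the cocycle identity); I expect the only genuine obstacle to be the comparison with the affine group --- one must correctly pin down the central kernel $Z$ and, crucially, use connectedness of $U$ to see that $Z$ acts simply transitively on the fibres of prepotentials, after which everything reduces to the simply transitive action of $\Aff_{\Sp(\bR^{2n})}(\bC^{2n})$ on K\"ahlerian Lagrangian immersions, which is the one input I would import from \cite{acd02}.
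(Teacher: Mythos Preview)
Your proposal is correct and follows essentially the same approach as the paper: both reduce the verification that $x\cdot F$ is a prepotential and the group-action axioms to the corresponding statements for Lagrangian potentials via \Cref{lem:lagpotprepotcorrespondence} and \Cref{prop:Gactiononpairs}, and both deduce simple transitivity from the simply transitive action of $\Aff_{\Sp(\bR^{2n})}(\bC^{2n})$ on K\"ahlerian Lagrangian immersions together with the fact that prepotentials of a fixed immersion differ by a constant. Your phrasing of the last step in terms of the central extension (central $Z\cong\bC$ acting simply transitively on fibres, $\bar\rho(\Gsk)$ simply transitively on the base) is a slightly more structural packaging of the same argument; the paper instead proves freeness directly by differentiating $X\phi+v=\phi$ and using that $\Re\phi$ is an immersion, but this is exactly what underlies the triviality of the stabiliser you invoke.
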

\begin{proof}
  We begin by showing that \cref{eq:Gactionprepotdef} defines a $\Gsk$-action on $\Fcal(U)$. Clearly, the neutral element of $\Gsk$ acts trivially.
  We can locally rewrite \cref{eq:prepottransform} as
  \begin{align}
    2x\cdot F - z'^tw'
    &= 2F - z^tw + (x\phi)^*\Omega(\cdot, v) - 2s \\
    &= (x\phi)^* (f\circ\inv x + \Omega(\cdot, v) - 2s) \\
    &= (x\phi)^* (x\cdot f)
  \end{align}
  where $f$ is the Lagrangian potential locally corresponding to $F$ according to \Cref{lem:lagpotprepotcorrespondence}, i.e., $\phi^*f = 2F - z^tw$. This shows that $x\cdot F$ is a prepotential, namely the prepotential locally corresponding to the Lagrangian potential $x\cdot f$ via $x\phi$. The remaining group action axioms now follow easily from \Cref{prop:Gactiononpairs}.

  It remains to show that the action is simply transitive. Let $(\phi, F)$, $(\phi', F')$ be two special K\"ahler pairs of $U$. Since $\phi$ and $\phi'$ are both K\"ahlerian Lagrangian immersions inducing same special K\"ahler structure, we know from Prop.~\ref{prop:askimmersion} that there is an element $(X,v)\in\Aff_{\Sp(\bR^{2n})}(\bC^{2n})$ such that $\phi' = (X,v)\circ\phi$. Since prepotentials are unique up to a constant, there is an $s\in\bC$ such that $x\cdot F = F'$ for $x=(X,s,v)\in \Gsk$. This shows $x\cdot(\phi, F) = (\phi', F')$ and, hence, the transitivity.
  To see that the action is free, assume that $x\cdot(\phi, F) = (\phi, F)$ for some $x=(X,s,v)\in \Gsk$. Then $X\circ\phi + v = \phi$. Differentiating and taking the real part gives $(X-\id_{2n})\circ\Re\diff\phi = 0$. Since $\phi$ is K\"ahlerian, $\Re\phi$ is an immersion and this implies $X=\id_{2n}$. But then from $X\circ\phi + v = \phi$ it also follows that $v=0$. Finally, $x\cdot(\phi, F) = (\phi, F - s)$ implies $s=0$ and, hence, $x$ is the identity of $\Gsk$.
 \end{proof}

\begin{Cor}
\label{cor:lineartransforms}
Under the assumptions of Prop.~\ref{prop:gactiononprepotentials}, the subgroup $\Sp(\bR^{2n})\subset \Gsk$ acts by
\[
  x\cdot(\phi, F) = \left(\phi' = x\phi, F' = x\cdot F = F-\frac12 z^t w + \frac12 z'^t w'\right)
\]
on the set of special K\"ahler pairs $(\phi,F)$. In particular, in the case of conical affine special K\"ahler manifolds, $\Sp(\bR^{2n})$ acts on the set of homogeneous prepotentials of degree 2.
\end{Cor}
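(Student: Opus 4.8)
The plan is to specialize the general transformation formula \eqref{eq:prepottransform} of \Cref{prop:gactiononprepotentials} to the subgroup $\Sp(\bR^{2n}) \subset \Gsk$ and then keep track of homogeneity. An element $X \in \Sp(\bR^{2n})$ is embedded into $\Gsk = \Sp(\bR^{2n}) \ltimes \Heis_{2n+1}(\bC)$ as $x = (X, 0, 0)$, with trivial central part $s = 0$ and trivial translation part $v = 0$; that these elements form a subgroup is immediate from the product law \eqref{eq:Gmultiplication}. Substituting $s = 0$ and $v = 0$ into \eqref{eq:prepottransform}, the summand $\tfrac12 (x\phi)^*\Omega(\cdot, v)$ vanishes identically and the term $-s$ drops out, leaving exactly
\[
  x \cdot F \;=\; F - \tfrac12 z^t w + \tfrac12 z'^t w',
\]
where $(z, w) := \phi$ and $(z', w') := x\phi = X \circ \phi$, as claimed. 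Since \Cref{prop:gactiononprepotentials} already guarantees that \eqref{eq:Gactionprepotdef} defines a (simply transitive) action of $\Gsk$ on $\Fcal(U)$, its restriction to $\Sp(\bR^{2n})$ is automatically a well-defined action, so the first assertion is just this bookkeeping.

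For the second assertion, suppose $M$ is conical. Then $\phi$ may be chosen equivariant with respect to the local $\bC^*$-action generated by $Z = \xi - \sqrt{-1}J\xi$ and scalar multiplication on $\bC^{2n}$, so that the components of $\phi = (z, w)$ are homogeneous of degree $1$ as functions on (an open subset of) $M$, and, as recalled in the preliminaries, the prepotential $F$ with $\diff F = \phi^*\lambda$ can be taken homogeneous of degree $2$, the additive constant being fixed to zero. Because $X \in \Sp(\bR^{2n})$ acts linearly on $\bC^{2n}$, it commutes with scalar multiplication, so $x\phi = X \circ \phi$ is again equivariant and the components $z', w'$ are linear combinations of the components of $z$ and $w$, hence again homogeneous of degree $1$. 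Therefore both quadratic functions $z^t w = \sum_i z^i w_i$ and $z'^t w' = \sum_i z'^i w'_i$ are homogeneous of degree $2$, and so
\[
  x \cdot F \;=\; F - \tfrac12 z^t w + \tfrac12 z'^t w'
\]
is homogeneous of degree $2$ as well. Thus the $\Sp(\bR^{2n})$-action preserves the set of homogeneous prepotentials of degree $2$.

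There is no real obstacle here. The one point deserving mild care is the homogeneity claim: one must use that the Liouville form $\lambda = w^t \diff z$ scales with weight $2$ under $(z, w) \mapsto (\mu z, \mu w)$, so that a prepotential of an equivariant K\"ahlerian Lagrangian immersion is homogeneous of degree $2$ on the nose rather than merely up to an additive constant, together with the fact that equivariance is preserved under the linear substitution by $X$. Everything else is a direct specialization of \Cref{prop:gactiononprepotentials} and, through it, of \Cref{prop:Gactiononpairs}.
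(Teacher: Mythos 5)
Your proof is correct and matches the paper's intent: the corollary is stated in the paper without proof precisely because it is the immediate specialization of formula \eqref{eq:prepottransform} to $x=(X,0,0)$, which is what you carry out, and your homogeneity argument for the conical case (equivariance of $\phi$, degree-$2$ homogeneity of $F$, $z^tw$ and $z'^tw'$) is the standard justification recorded in the remark preceding Section 2. Nothing further is needed.
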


\begin{rem}
  By Corollary \ref{cor:lineartransforms}, the function $F-\frac12z^tw$ is invariant under the above action of $\Sp(\bR^{2n})$ in the sense that
  \begin{equation}
    \label{eq:dewitinvariance}
    F' - \frac12 z'^t w' = F-\frac12z^t w.
  \end{equation}
  This is precisely the statement of de Wit, see eq.~(10) in~\cite{dewit}, that $F-\frac12z^t w$ transforms as a symplectic function under linear symplectic transformations.

  In terms of the Lagrangian potentials $f$ and $f'$ corresponding to $F$ and $F'$, eq. \eqref{eq:dewitinvariance} is equivalent to
  \[
    f\circ\phi = f'\circ\phi'.
  \]

\end{rem}

\section{Conification of Lagrangian submanifolds}
\label{sec:conif-lagr-subm}
The aim is to associate (under some assumptions) a Lagrangian cone $\hat \Lcal \subset \bC^{2n+2}$ with a Lagrangian submanifold $\Lcal\subset\bC^{2n}$, and vice versa.

Fix a linear symplectic splitting $\bC^{2n+2} = \bC^2\times\bC^{2n}$ of the symplectic vector space $\bC^{2n+2}$ with its standard symplectic form $\hat\Omega$ and linear Darboux coordinates $z^0,w_0$ in $\bC^2$ such that the symplectic form on $\bC^2$ is given by $\diff z^0\wedge\diff w_0$. Then the symplectic vector space $\bC^{2n}$ with its standard symplectic form $\Omega$ is recovered as the symplectic reduction with respect to the Hamiltonian flow of the function $z^0$ as explained in Rem.~\ref{rem:symplecticreductionexplained}. Let $\pi:\{z^0=1\}\to\{z^0=1\}/\langle \partial_{w_0} \rangle = \bC^{2n}$ be the quotient map and $\iota : \{z^0=1\} \hookrightarrow \bC^{2n+2}$ the inclusion.

In one direction, let $\Lcal$ be a Lagrangian submanifold of $\bC^{2n}$. A submanifold $\hat\Lcal_1\subset\{z^0=1\} \subset\bC^{2n+2}$
is called a \emph{lift} of $\Lcal$ if the projection
\begin{equation}
  \pi\vert_{\hat\Lcal_1}: \hat\Lcal_1 \to \Lcal
\end{equation}
is a diffeomorphism. Equivalently, a lift is a section over $\Lcal$ of the trivial $\bC$-bundle $\pi:\{z^0=1\}\to\bC^{2n}$. Hence, a lift $\hat\Lcal_1$ is of the form $\hat\Lcal_1 = \{(1,f(q),q) \mid q\in\Lcal\}$ for a function $f:\Lcal \to \bC$.

\begin{Prop}
\label{prop:conificationislagrange}
  Let $\hat\Lcal_1$ be a lift of a Lagrangian submanifold $\Lcal\subset\bC^{2n}$ with respect to the function $f:\Lcal \to\bC$. Then the cone $\hat\Lcal := \bC^*\cdot \hat\Lcal_1$ is Lagrangian if and only if $f$ is a Lagrangian potential.
\end{Prop}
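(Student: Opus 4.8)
The plan is to parametrize the cone explicitly and pull back the symplectic form of $\bC^{2n+2}$. Write $\bC^{2n+2}=\bC^2\times\bC^{2n}$ with $\hat\Omega = \diff z^0\wedge\diff w_0 + \Omega$, and recall $\hat\Lcal_1 = \{(1, f(q), q)\mid q\in\Lcal\}$, so that $\hat\Lcal = \bC^*\cdot\hat\Lcal_1$ is the image of
\[
  \Psi:\bC^*\times\Lcal\to\bC^{2n+2},\qquad \Psi(\lambda,q) = \lambda\cdot(1,f(q),q) = (\lambda,\lambda f(q),\lambda q).
\]
First I would observe that $\Psi$ is an immersion, hence a local diffeomorphism onto $\hat\Lcal$, since the two have equal complex dimension $n+1$: the vector $\p_\lambda\Psi=(1,f(q),q)$ points in the Euler (radial) direction, which has nonzero $z^0$-component and is therefore transverse to the images of the slices $\{\lambda\}\times\Lcal$, these being contained in the hyperplane $\{\diff z^0=0\}$. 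Because $\dim_\bC\hat\Lcal = n+1 = \tfrac12\dim_\bC\bC^{2n+2}$, the cone $\hat\Lcal$ is Lagrangian if and only if it is isotropic, i.e.\ if and only if $\Psi^*\hat\Omega = 0$.

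Next I would compute $\Psi^*\hat\Omega$ directly. Writing $\iota:\Lcal\hookrightarrow\bC^{2n}$, one has $\Psi^*z^0=\lambda$, $\Psi^*w_0=\lambda f$, $\Psi^*z^i=\lambda\,\iota^*z^i$, $\Psi^*w_i=\lambda\,\iota^*w_i$, so that $\Psi^*(\diff z^0\wedge\diff w_0) = \lambda\,\diff\lambda\wedge\diff f$ and, for each $i$,
\[
  \Psi^*(\diff z^i\wedge\diff w_i) = \lambda\,\diff\lambda\wedge\iota^*\!\bigl(z^i\diff w_i - w_i\diff z^i\bigr) + \lambda^2\,\iota^*(\diff z^i\wedge\diff w_i).
\]
Summing over $i$, the $\lambda^2$-terms add up to $\lambda^2\,\iota^*\Omega$, which vanishes because $\Lcal$ is Lagrangian, while the remaining terms assemble, via the coordinate formula $\eta=\sum(z^i\diff w_i-w_i\diff z^i)$, into $\lambda\,\diff\lambda\wedge\iota^*\eta$. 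Hence
\[
  \Psi^*\hat\Omega = \lambda\,\diff\lambda\wedge\bigl(\diff f + \iota^*\eta\bigr).
\]

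Finally, since $\lambda\neq0$ on $\bC^*\times\Lcal$ and $\diff f + \iota^*\eta$ is the pullback of a $1$-form on $\Lcal$ — carrying no $\diff\lambda$-component and being independent of $\lambda$ — the product $\diff\lambda\wedge(\diff f+\iota^*\eta)$ vanishes if and only if $\diff f + \iota^*\eta = 0$, that is, $\diff f = -\eta\vert_\Lcal$, which by \Cref{def:lagrangianpotentialpair} is exactly the condition that $f$ be a Lagrangian potential of $\Lcal$. Together with the isotropic-equals-Lagrangian remark from the first step, this establishes the equivalence in both directions. I do not anticipate a genuine obstacle; the only points that need care are the bookkeeping in the wedge-product computation and the two observations that make it collapse — that the $\lambda^2$ cross-terms are annihilated by the Lagrangian condition on $\Lcal$, and that the $\lambda\,\diff\lambda$ cross-terms reconstitute the canonical $1$-form $\eta$.
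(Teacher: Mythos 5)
Your proof is correct and is essentially the paper's argument in a different packaging: the paper checks pointwise that $\hat\Omega(p,\hat X_p)=\diff f(X)+\eta_q(X)$ for $p\in\hat\Lcal_1$, which is exactly the $\lambda\,\diff\lambda\wedge(\diff f+\iota^*\eta)$ term in your pullback $\Psi^*\hat\Omega$. Your version has the small virtue of making explicit what the paper's ``it is sufficient to show'' leaves implicit — the dimension count and the vanishing of the pairings among tangent vectors to the slices via $\iota^*\Omega=0$.
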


\begin{proof}
  By the above $\hat\Lcal_1 = \{(1,f(q),q) \mid q\in\Lcal\}$. To show that $\hat\Lcal:=\bC^*\cdot\hat\Lcal_1$ is Lagrangian it is sufficient to show that $\hat\Omega(p,\hat X_p)=0$ for all $p\in\hat\Lcal_1$ and $\hat X_p\in T_p\hat\Lcal_1$.
  A tangent vector $\hat X_p\in T_p\hat\Lcal_1$ is of the form $\hat X_p = \diff f(X)\partial_{w_0} + X$ for $X\in T_q\Lcal$ with $q=\pi(p)$. Then
  \begin{align}
    \hat\Omega(p, \hat X_p)
    &=
      \hat\Omega(\partial_{z^0}+f(q)\partial_{w_0} + q, \hat X_p) \\
    &=
      \diff z^0\wedge\diff w_0\left(\partial_{z^0}+f(q)\partial_{w_0}, \diff f(X)\partial_{w_0}\right)
      + \Omega(q, X) \\
    &=
      \diff f(X) + \eta_q(X).
  \end{align}
  Hence, $\hat\Lcal$ is Lagrangian if and only if $\diff f = -\eta\vert_\Lcal$.
\end{proof}

\begin{Def}
  \label{def:conification}
Let $\hat\Lcal_1$ be the lift of the Lagrangian pair $(\Lcal, f)$. We call the Lagrangian cone $\con(\Lcal, f) := \bC^*\cdot\hat\Lcal_1$ the \emph{conification} of $(\Lcal, f)$.
\end{Def}

Conversely, let $\hat\Lcal\subset\bC^{2n+2}$ be a Lagrangian cone such that the submanifold $\hat\Lcal_1:=\hat\Lcal\cap\{z^0=1\}$ is transverse to the Hamiltonian vector field $\partial_{w_0}$ and each integral curve intersects $\hat\Lcal_1$ at most once. We will call Lagrangian cones with this property \emph{regular}. Then we define $\Lcal\subset\bC^{2n}$ as the image of $\hat\Lcal_1$ under the quotient map $\pi:\{z^0=1\}\to\{z^0=1\}/\langle \partial_{w_0} \rangle = \bC^{2n}$. Since the pullback $\pi^*\Omega$ of the symplectic form $\Omega$ on $\bC^{2n}$ is given by $\pi^*\Omega = \iota^*\hat\Omega$, it follows that $\Lcal$ is Lagrangian. By the regularity, the function $f := w_0\circ\inv{(\pi\vert_{\hat\Lcal_1})}$ is a well-defined function on $\Lcal$ and $\hat\Lcal_1$ is of the form $\hat\Lcal_1 = \{(1,f(q), q) \mid q\in\Lcal\}$. In particular, $\hat\Lcal_1$ is the lift of $\Lcal$ with respect to the function $f$.

\begin{Def}
  \label{def:reduction}
  We call the pair $\red(\hat\Lcal):=(\Lcal,f)$ the \emph{reduction} of the regular Lagrangian cone $\hat\Lcal\subset\bC^{2n+2}$.
\end{Def}



\begin{Prop}
  With respect to a splitting $\bC^{2n+2}=\bC^2\times\bC^{2n}$ and linear Darboux coordinates $z^0,w_0$ of $\bC^2$, we obtain a one-to-one correspondence
  \begin{equation}
    \label{eq:onetooneconification}
    \xymatrixcolsep{3pc}
    \xymatrix{
      \{\text{Lagrangian pairs $(\Lcal, f)$ in $\bC^{2n}$}\}
      \ar@{<->}[r]^-{\text{1:1}}
      &
      \{\text{Regular Lagrangian cones in $\bC^{2n+2}$}\}
    }
  \end{equation}
  given by conification and reduction.

  Moreover, conification and reduction are equivariant with respect to the action of the group $\Gc$, i.e., $\con(x\cdot(\Lcal,f)) = \rho(x)\con(\Lcal,f)$ and $\red(\rho(x)\hat\Lcal) = x\cdot\red(\hat\Lcal)$ for $x\in \Gc$.
\end{Prop}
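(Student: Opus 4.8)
The plan is to prove the two halves of the statement separately: first that conification and reduction are mutually inverse bijections, then that both are $\Gc$-equivariant.

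For the bijection, I would note that the construction preceding \Cref{def:reduction} already does most of the work. Starting from a Lagrangian pair $(\Lcal, f)$, its conification $\con(\Lcal, f) = \bC^*\cdot\hat\Lcal_1$ with $\hat\Lcal_1 = \{(1, f(q), q)\mid q\in\Lcal\}$ is Lagrangian by \Cref{prop:conificationislagrange}, and it is regular because by construction $\hat\Lcal_1 = \hat\Lcal\cap\{z^0=1\}$ (the cone meets the hyperplane exactly in the lift, since scaling $(1,f(q),q)$ by $\lambda\neq 1$ changes the $z^0$-coordinate), it is a graph over $\Lcal$ hence transverse to $\p_{w_0}$, and each integral curve of $\p_{w_0}$ (a line in the $w_0$-direction over a fixed $q\in\bC^{2n}$) meets this graph at most once. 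Applying $\red$ then recovers $\Lcal$ as $\pi(\hat\Lcal_1)$ and the function $w_0\circ\inv{(\pi\vert_{\hat\Lcal_1})}$, which sends $q\mapsto f(q)$; so $\red\circ\con = \Id$. Conversely, given a regular Lagrangian cone $\hat\Lcal$, the discussion before \Cref{def:reduction} shows $\hat\Lcal\cap\{z^0=1\}$ is precisely the lift of $\red(\hat\Lcal)$, so $\con(\red(\hat\Lcal)) \cap \{z^0=1\} = \hat\Lcal\cap\{z^0=1\}$; since both are cones (invariant under $\bC^*$) and a cone is determined by its intersection with $\{z^0=1\}$ \emph{provided} no nonzero point of the cone lies in $\{z^0=0\}$, I would need the small observation that a regular Lagrangian cone has no such points — otherwise the $\bC^*$-orbit of a point with $z^0=0$ never meets $\{z^0=1\}$ and is not detected. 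This is where I expect the only genuine subtlety: one must either build this non-degeneracy into the definition of ``regular'' (the phrase ``each integral curve intersects $\hat\Lcal_1$ at most once'' together with transversality should be leveraged, or one simply restricts attention to the component meeting the hyperplane), or argue that the relevant correspondence is really between cones-with-marked-hyperplane-slice. I would state this carefully and then conclude $\con\circ\red = \Id$.

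For equivariance, I would use \Cref{prop:liftofaffinegroup}: $\rho(x)$ preserves $\{z^0=1\}$ and the distribution $\langle\p_{w_0}\rangle$, and induces $\bar\rho(x)$ on the symplectic reduction $\bC^{2n}$. Hence $\rho(x)$ maps the lift $\hat\Lcal_1 = \{(1,f(q),q)\}$ of $(\Lcal,f)$ to a submanifold of $\{z^0=1\}$ which is again a graph over $\bar\rho(x)\Lcal = x\Lcal$; computing the $w_0$-component of $\rho(x)(1, f(q), q)$ using the explicit matrix $\rho(x) = \left(\begin{smallmatrix}1&0&0\\-2s&1&\hat v^t\\ v&0&X\end{smallmatrix}\right)$ gives $w_0 \mapsto -2s + f(q) + \hat v^t q$, and one checks $\hat v^t q = \Omega(Xq, v) = \Omega(\bar\rho(x)q - v, v) = \Omega(\bar\rho(x)q, v)$, i.e. the new graphing function at the point $\bar\rho(x)q \in x\Lcal$ equals $f(q) + \Omega(\bar\rho(x)q, v) - 2s = (x\cdot f)(\bar\rho(x)q)$, exactly \eqref{eq:transformoflagrangianpotential}. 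Since $\rho(x)$ is linear it commutes with the $\bC^*$-action, so it carries $\con(\Lcal,f) = \bC^*\cdot\hat\Lcal_1$ to $\bC^*\cdot\rho(x)\hat\Lcal_1 = \con(x\cdot(\Lcal,f))$, proving $\con(x\cdot(\Lcal,f)) = \rho(x)\con(\Lcal,f)$. The reduction statement $\red(\rho(x)\hat\Lcal) = x\cdot\red(\hat\Lcal)$ then follows formally by applying the first identity with $(\Lcal,f) = \red(\hat\Lcal)$ and using the bijection from the first part (together with the fact that $\rho(x)$ preserves regularity, which is immediate since it preserves $\{z^0=1\}$ and $\langle\p_{w_0}\rangle$). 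The main obstacle, as noted, is pinning down the non-degeneracy of regular cones at $\{z^0=0\}$ so that $\con\circ\red = \Id$ holds on the nose; everything else is bookkeeping with the explicit matrix $\rho(x)$ and the already-established \Cref{prop:conificationislagrange} and \Cref{prop:Gactiononpairs}.
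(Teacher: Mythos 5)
Your proof is correct and follows essentially the same route as the paper: identify $\hat\Lcal\cap\{z^0=1\}$ with the lift of the pair, invoke \Cref{prop:conificationislagrange} for both directions of the bijection, and establish equivariance by the explicit matrix computation $w_0\mapsto f(q)+\Omega(\bar\rho(x)q,v)-2s$ on the lift before extending by the $\bC^*$-action and deducing the $\red$-statement from $\red=\inv\con$. The subtlety you flag --- that $\con(\red(\hat\Lcal))=\hat\Lcal$ requires $\hat\Lcal=\bC^*\cdot\hat\Lcal_1$, i.e.\ that the regular cone contains no points with $z^0=0$ --- is genuine but is passed over silently in the paper's proof as well (which simply writes ``the cone $\hat\Lcal=\bC^*\cdot\hat\Lcal_1$''), so your explicit acknowledgement is, if anything, more careful.
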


\begin{proof}
  Let $\hat\Lcal\subset\bC^{2n+2}$ be a regular Lagrangian cone. We have already seen that $\hat\Lcal_1 = \hat\Lcal \cap \{z^0 = 1\}$ is the same as the lift of the pair $(\Lcal, f) := \red(\hat\Lcal)$. Since the cone $\hat\Lcal = \bC^*\cdot\hat\Lcal_1$ is Lagrangian, it follows from Prop.~\ref{prop:conificationislagrange} that $f$ is a Lagrangian potential and, hence, $\con(\red(\hat\Lcal)) = \hat\Lcal$.
  Conversely, if $(\Lcal, f)$ is a Lagrangian pair and $\hat\Lcal_1\subset\{z^0=1\}$ is the lift of $\Lcal$ with respect to $f$, then $\con(\Lcal, f) = \bC^*\cdot\hat\Lcal_1$ is a regular Lagrangian cone by Prop.~\ref{prop:conificationislagrange}. Since $\con(\Lcal, f)\cap\{z^0=1\} = \hat\Lcal_1$, it follows that $\red(\con(\Lcal, f)) = (\Lcal, f)$. This shows $\red=\inv\con$.

  Now let $x=(X,s,v)\in \Gc$ and $\hat\Lcal_1$ be the lift of a Lagrangian pair $(\Lcal, f)$. Then
  \begin{align}
    \rho(x)\hat\Lcal_1
    &=
      \rho(x)\{ (1, f(q), q) \in\bC^{2n+2} \mid q\in\Lcal \}\\
    &=
      \{(1, f(q) + \hat v^tq - 2s, xq)\in\bC^{2n+2} \mid q\in\Lcal\} \\
    &=
      \{(1, f(q) + \Omega(xq, v) - 2s, xq) \in\bC^{2n+2}\mid q\in\Lcal \} \\
    &=
      \{(1, f(\inv x q') + \Omega(q', v) - 2s, q') \in\bC^{2n+2}\mid q'\in x\Lcal\} \\
    &=
      \{(1, x\cdot f(q'), q') \in\bC^{2n+2}\mid q'\in x\Lcal\}.
  \end{align}
  This shows that $\rho(x)\hat\Lcal_1$ is the lift of the Lagrangian pair $x\cdot(\Lcal,f) = (x\Lcal, x\cdot f)$. Since the action of $\Gc$ on $\bC^{2n+2}$ leaves level-sets of $z^0$ and the distribution spanned by $\partial_{w_0}$ invariant, it follows that
  \begin{equation}
    \con(x\cdot(\Lcal,f)) = \bC^*\cdot(\rho(x)\hat\Lcal_1) = \rho(x)(\bC^*\cdot\hat\Lcal_1) = \rho(x)\con(\Lcal,\hat w_0).
  \end{equation}
  The equivariance of $\red$ follows immediately from $\red = \inv\con$.
\end{proof}

\begin{Prop}
\label{prop:kaehlerianconificationcondition}
  Let $(\Lcal, f)$ be a Lagrangian pair such that $\Lcal$ is K\"ahlerian. If there is a point $q\in\Lcal$ such that $q$ is real and $f(q)\not\in\bR$, then there is an open neighborhood $U\subset\Lcal$ of $q$ such that the Lagrangian cone $\hat U := \con(U, f) \subset \hat\Lcal := \con(\Lcal, f)$ is K\"ahlerian.
\end{Prop}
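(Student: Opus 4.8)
The plan is to reduce the statement to a single pointwise linear–algebra check. Recall the characterization (proved just before \Cref{cor:klpaction}, a consequence of \Cref{prop:kaehlerianifftotallycomplex}) that a Lagrangian submanifold of $\bC^{2n+2}$ is K\"ahlerian precisely when it is \emph{totally complex}, i.e.\ $T_p\hat\Lcal\cap\hat\tau T_p\hat\Lcal=\{0\}$ at each point, $\hat\tau$ being the standard real structure (complex conjugation). I would verify this condition at the single point $p_0:=(1,f(q),q)\in\hat\Lcal_1$ and then propagate it to an open subcone of the form $\con(U,f)$.

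First I would compute the tangent space of $\hat\Lcal=\bC^*\cdot\hat\Lcal_1$. Parametrizing $\hat\Lcal$ by $(\lambda,q')\mapsto\lambda(1,f(q'),q')$ on $\bC^*\times\Lcal$, one finds at $p_0$
\[
  T_{p_0}\hat\Lcal=\bC\cdot v_0\oplus\{(0,\diff f_q(X),X)\mid X\in T_q\Lcal\},\qquad v_0:=(1,f(q),q),
\]
the sum being direct because of the $z^0$-component. Since $q$ is real, $\hat\tau v_0=(1,\overline{f(q)},q)$, while $\hat\tau$ carries the second summand to $\{(0,\overline{\diff f_q(X)},\bar X)\mid X\in T_q\Lcal\}$.

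Next I would run the intersection computation. Take $w\in T_{p_0}\hat\Lcal\cap\hat\tau T_{p_0}\hat\Lcal$ and write $w=a v_0+(0,\diff f_q(X),X)=b\,\hat\tau v_0+(0,\overline{\diff f_q(Y)},\bar Y)$ with $a,b\in\bC$ and $X,Y\in T_q\Lcal$. Comparing $z^0$-components gives $a=b$; using this together with $q=\bar q$, the $\bC^{2n}$-components give $X=\bar Y$, so $X\in T_q\Lcal\cap\overline{T_q\Lcal}=T_q\Lcal\cap\tau T_q\Lcal=\{0\}$ because $\Lcal$ is K\"ahlerian. Hence $X=Y=0$ and $w=a v_0=a\,\hat\tau v_0$; comparing $w_0$-components then yields $a\,f(q)=a\,\overline{f(q)}$, and since $f(q)\notin\bR$ this forces $a=0$, so $w=0$. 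Thus $\hat\Lcal$ is totally complex at $p_0$, and one sees that all three hypotheses are genuinely used: reality of $q$ to identify $\hat\tau v_0$ and to pass to $\overline{T_q\Lcal}$, the K\"ahler condition on $\Lcal$ to kill $X$, and $f(q)\notin\bR$ to kill $a$.

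Finally I would upgrade this to a neighborhood statement. The locus $W\subset\hat\Lcal$ where $\hat\Lcal$ is totally complex is open, since the condition is the non-vanishing of a determinant depending smoothly on $p$ (that of the $\bC$-linear map $T_p\hat\Lcal\oplus\hat\tau T_p\hat\Lcal\to\bC^{2n+2}$, $(u,v)\mapsto u+v$), and it is $\bC^*$-invariant because for a complex cone $T_{\lambda p}\hat\Lcal=\lambda\,T_p\hat\Lcal$ is a complex subspace, so $T_{\lambda p}\hat\Lcal\cap\hat\tau T_{\lambda p}\hat\Lcal=\lambda(T_p\hat\Lcal\cap\hat\tau T_p\hat\Lcal)$. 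Hence $W\cap\hat\Lcal_1$ is an open neighborhood of $p_0$ in $\hat\Lcal_1$; setting $U:=\pi(W\cap\hat\Lcal_1)$ gives an open neighborhood of $q$ in $\Lcal$ with $\hat U=\con(U,f)=\bC^*\cdot(W\cap\hat\Lcal_1)\subseteq W$, so $\hat U$ is K\"ahlerian. I expect the only mildly delicate point to be this last step — arranging that the good neighborhood can be taken to be an actual \emph{subcone} $\con(U,f)$ rather than an arbitrary open set — which is exactly what the $\bC^*$-invariance of $W$ provides; the computation at $p_0$ itself is routine.
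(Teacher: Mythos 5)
Your proposal is correct and follows essentially the same route as the paper: a pointwise check of the totally complex condition at $(1,f(q),q)$ using the decomposition $T_p\hat\Lcal=\bC\cdot p\oplus T_q\Lcal$, with the reality of $q$, the K\"ahler condition on $\Lcal$, and $f(q)\notin\bR$ killing the respective components, followed by openness and $\bC^*$-invariance of the condition. The only cosmetic difference is that you compare two parametrizations of a general vector in $T_p\hat\Lcal\cap\hat\tau T_p\hat\Lcal$, while the paper equivalently reduces to $\tau$-fixed vectors.
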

\begin{proof}
  Let $q\in\Lcal$ be real such that $f(q)\not\in\bR$ and choose an arbitrary $\hat v \in T_p\hat\Lcal\cap\tau T_p\hat\Lcal$ for $p = (1, f(q), q)\in\hat\Lcal$. Since $T_p\hat\Lcal = \Span_\bC(p) \oplus T_q\Lcal$, we have $\hat v = \lambda(1, f(q), q) + (0, \diff f(v), v)$ for $\lambda\in\bC$ and $v\in T_q\Lcal$. The condition $\hat v - \tau\hat v = 0$ gives three equations
  \begin{align}
    0 &= \lambda - \widebar\lambda, \\
    0 &= \lambda f(q) - \widebar{\lambda f(q)} + \diff f(v) - \widebar{\diff f(v)}, \\
    0 &= \lambda q - \widebar{\lambda q} + v - \widebar v.
  \end{align}
  From the first, we immediately see that $\lambda \in \bR$.
  From the third we find $v - \widebar v = \lambda(\widebar q - q) =0$ since $q$ is a real point.
  But $v - \widebar v = 0$ is only possible if $v=0$ as $\Lcal$ is K\"ahlerian.
  The second equation then implies $\lambda(f(q)-\widebar{f(q)}) = 0$ which, as $f(q)\not\in\bR$, is only possible if $\lambda = 0$.
  Hence, $\hat v = 0$ and this shows $T_p\hat\Lcal\cap\tau T_p\hat\Lcal = 0$.
  Since $\hat\Lcal$ is Lagrangian, this is equivalent to the Hermitian form $\hat\gamma = \hat\Omega(\cdot, \tau\cdot)$ being non-degenerate when restricted to $\hat\Lcal$ at the point $p$.
  By continuity, it is then also non-degenerate on a neighborhood $\hat U_1\subset\hat\Lcal_1 = \hat\Lcal\cap\{z^0=1\}$ of $p$.
  Non-degeneracy is invariant under multiplication by $z^0\in\bC^*$, which acts by homothety on the Hermitian form $\hat\gamma$.
  Therefore, $\hat\gamma\vert_{\hat\Lcal}$ is non-degenerate on $\hat U := \bC^*\cdot\hat U_1$ which is the conification of the Lagrangian pair $(U, f)$ for $U = \pi(\hat U_1)$.
\end{proof}

\begin{Prop}
\label{prop:kaehlerianconificationexistence}
  If $(\Lcal, f)$ is a Lagrangian pair and $\Lcal$ is K\"ahlerian, then there is an open subset $U\subset\Lcal$ and an element $x\in \Gsk$ such that the cone $\con(x\cdot(U,f))$ is K\"ahlerian.
\end{Prop}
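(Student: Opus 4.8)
The idea is to reduce to \Cref{prop:kaehlerianconificationcondition}: that proposition already guarantees a K\"ahlerian conification provided $\Lcal$ contains a \emph{real} point $q$ at which the Lagrangian potential takes a \emph{non-real} value, so it suffices to move the pair $(\Lcal, f)$ by a suitable element of $\Gsk$ until this configuration is achieved. Recall that $\Gsk$ acts on $\bC^{2n}$ through $\bar\rho(\Gsk) = \Aff_{\Sp(\bR^{2n})}(\bC^{2n})$, so it contains in particular all translations of $\bC^{2n}$ by arbitrary vectors $v\in\bC^{2n}$, together with the freedom of shifting the Lagrangian potential by an arbitrary complex constant through the central parameter $s$.

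First I would fix any $q_0\in\Lcal$ and pick $v\in\bC^{2n}$ with $q_0 + v\in\bR^{2n}$; concretely one may take $v := -\sqrt{-1}\,\Im q_0$, so that $q_0+v = \Re q_0$. For a parameter $s\in\bC$ still to be chosen, set $x := (\id, s, v)\in\Gsk$, with linear part $\id\in\Sp(\bR^{2n})$. Then $\bar\rho(x)$ is the translation $q\mapsto q+v$, so $\tilde q := \bar\rho(x)q_0 = \Re q_0$ is a real point of $x\Lcal := \bar\rho(x)\Lcal$. By \eqref{eq:transformoflagrangianpotential} we have $(x\cdot f)(\tilde q) = f(q_0) + \Omega(\tilde q, v) - 2s$, an affine function of $s$ with non-zero coefficient $-2$; hence $s$ can be chosen so that $(x\cdot f)(\tilde q)\notin\bR$.

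With this choice, \Cref{cor:klpaction} tells us that $(x\Lcal, x\cdot f)$ is again a K\"ahlerian Lagrangian pair, and by construction it satisfies the hypothesis of \Cref{prop:kaehlerianconificationcondition} at the point $\tilde q$. Applying \Cref{prop:kaehlerianconificationcondition} to $(x\Lcal, x\cdot f)$ yields an open neighbourhood $U'\subset x\Lcal$ of $\tilde q$ for which $\con(U', x\cdot f)$ is K\"ahlerian. Putting $U := \bar\rho(x)^{-1}(U')\subset\Lcal$, which is open, one has $x\cdot(U, f) = (U', (x\cdot f)|_{U'})$, and therefore $\con(x\cdot(U,f)) = \con(U', x\cdot f)$ is K\"ahlerian, which is the claim.

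I do not expect a genuine obstacle here; the argument is essentially a translation feeding into \Cref{prop:kaehlerianconificationcondition}. The only points deserving care are that the transformations used really lie in $\Gsk$ rather than just in $\Gc$ — translations by arbitrary complex vectors and shifts of the potential by arbitrary complex constants are available because $\bar\rho(\Gsk) = \Aff_{\Sp(\bR^{2n})}(\bC^{2n})$ and $\Gsk$ contains the full complex Heisenberg group — and that the K\"ahlerian property of a Lagrangian submanifold of $\bC^{2n}$ is $\Gsk$-invariant, which is exactly \Cref{cor:klpaction}. Note that the corresponding statement for the \emph{cones} in $\bC^{2n+2}$ fails for $\Gsk$, which is precisely why one must first arrange the hypothesis of \Cref{prop:kaehlerianconificationcondition} and cannot simply transport a K\"ahlerian cone by $\rho(x)$.
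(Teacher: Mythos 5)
Your proposal is correct and follows essentially the same route as the paper: use a complex translation in $\Gsk$ to produce a real point of $\Lcal$, use the central parameter $s$ to make the Lagrangian potential non-real there, and then invoke \Cref{prop:kaehlerianconificationcondition}. The only cosmetic difference is that the paper translates the chosen point all the way to the origin, whereas you translate by $-\sqrt{-1}\,\Im q_0$ to kill only its imaginary part.
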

\begin{proof}
  Let $(\Lcal, f)$ be a Lagrangian pair such that $\Lcal$ is K\"ahlerian. If $\Lcal$ does not have real points, set $\Lcal' = \Lcal - q$ for an arbitray $q\in\Lcal$. Then $0\in\Lcal'$ is a real point and we can choose a Lagrangian potential $f'$ such that $f'(0) \not\in\bR$. This determines an element $x\in \Gsk$ such that $(\Lcal', f') = x\cdot(\Lcal, f)$. The statement now follows from Prop.~\ref{prop:kaehlerianconificationcondition}.
\end{proof}


\section{Conification of affine special K\"ahler manifolds}

\subsection{Conification of special K\"ahler pairs}

Since special K\"ahler pairs locally correspond to Lagrangian pairs we can use the results from the previous chapter to give a conification procedure for special K\"ahler pairs.

\begin{Prop}
Let $(\phi, F)$ be a special K\"ahler pair of an affine special K\"ahler manifold $M$ and denote by $(z,w):=\phi$ the components of $\phi$ as before.
Set $\hat M := \bC^*\times M = \{(z^0,p)\in\bC^*\times M\}$ with $\bC^*$-action defined by $\lambda\cdot(z^0,p) := (\lambda z^0, p)$. Then the map
  \begin{align}
  \Phi: \hat M
    &\to \bC^{2n+2} \\
    (z^0, p)
    &\mapsto
      z^0(1, (2F-z^tw)(p), \phi(p))
  \end{align}
  is a $\bC^*$-equivariant Lagrangian immersion of $\hat M$. 
\end{Prop}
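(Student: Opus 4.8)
My plan is to identify $\Phi$, locally on $M$, with a parametrization of the conification construction of \Cref{sec:conif-lagr-subm}. Write $\Psi := (1,\,2F-z^tw,\,\phi): M \to \bC^{2n+2}$, so that $\Phi(z^0,p) = z^0\cdot\Psi(p)$. On any open set $U\subset M$ on which $\phi$ restricts to an embedding, $\Lcal_U := \phi(U)\subset\bC^{2n}$ is a Lagrangian submanifold and, by \Cref{lem:lagpotprepotcorrespondence}, the function $2F-z^tw$ is the pullback $\phi^*f$ of the Lagrangian potential $f$ of $\Lcal_U$ corresponding to $F$ (the relation $F=\tfrac12(\phi^*f+z^tw)$ rearranges to $\phi^*f=2F-z^tw$). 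Hence $\Phi\vert_{\bC^*\times U}(z^0,p) = z^0\cdot(1, f(\phi(p)), \phi(p))$ parametrizes precisely the cone $\con(\Lcal_U, f)$ of \Cref{def:conification}.

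Granting this, the proof is short. The $\bC^*$-equivariance is immediate from $\Phi(\lambda z^0, p) = \lambda\cdot(z^0\Psi(p)) = \lambda\cdot\Phi(z^0,p)$, and $\Phi$ is holomorphic because $F$ is holomorphic (being a prepotential, \Cref{def:prepotential}) and $\phi$ is holomorphic. For the immersion property I would compute $\diff\Phi_{(z^0,p)}(\dot z^0, X) = \dot z^0\,\Psi(p) + z^0\, \diff\Psi_p(X)$ for $X\in T_pM$: the $z^0$-component of $\Psi$ is the constant $1$, so the $z^0$-component of this tangent vector equals $\dot z^0$, forcing $\dot z^0 = 0$; then $z^0\neq 0$ gives $\diff\Psi_p(X) = 0$, and since the last $2n$ components of $\Psi$ constitute $\phi$, an immersion, $X = 0$. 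Finally, $\Phi^*\hat\Omega = 0$ is a pointwise condition, so it suffices to check it on each $\bC^*\times U$ as above, where $\con(\Lcal_U, f)$ is a Lagrangian cone by \Cref{prop:conificationislagrange}. Alternatively one verifies it by a direct computation of $\Phi^*\hat\Omega$ using $\Phi^*z^0 = z^0$, $\Phi^*w_0 = z^0(2F-z^tw)$, $\Phi^*z^i = z^0z^i$, $\Phi^*w_i = z^0 w_i$, the prepotential identity $\diff F = \phi^*\lambda = w^t\diff z$, and $\phi^*\Omega = 0$.

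I do not expect a genuine obstacle here: the only point needing care is the cancellation in $\Phi^*\hat\Omega$ between the contribution of the $\bC^2$-factor and that of the $\bC^{2n}$-factor of $\hat\Omega$, which is driven by $\diff(2F-z^tw) = w^t\diff z - z^t\diff w$ — after pullback this is the analogue of the Lagrangian-potential equation $\diff f = -\eta\vert_\Lcal$ used in the proof of \Cref{prop:conificationislagrange}. The localization to an embedded $U$ is needed only because $\phi$ is a priori merely an immersion; everything else is bookkeeping. It is worth recording along the way that $\hat M$ has complex dimension $n+1$ while $\Phi$ takes values in $\bC^{2n+2}$, which is the natural setting in which the conical affine special K\"ahler structure will be extracted in the sequel.
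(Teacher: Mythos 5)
Your proposal is correct and follows essentially the same route as the paper: localize to open sets where $\phi$ is an embedding, use \Cref{lem:lagpotprepotcorrespondence} to identify $2F-z^tw$ with the pullback of a Lagrangian potential $f$, and recognize $\Phi$ there as a parametrization of $\con(\Lcal_U,f)$, which is Lagrangian by \Cref{prop:conificationislagrange}. The explicit immersion and equivariance checks you supply are details the paper leaves to the reader, and they are correct.
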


\begin{proof}
  Consider open subsets $\hat U$ of $\hat M$ of the form $\hat U = \bC^*\times U$ where $U\subset M$ is open such that $\phi\vert_U$ is an embedding. Let $(\Lcal, f)$ be the Lagrangian pair corresponding to $(\phi, F)\vert_U$ by \Cref{lem:lagpotprepotcorrespondence}. Then $\Phi(z^0,p) = z^0(1, f(\phi(p)), \phi(p))$ for all $(z^0,p)\in\hat U$, i.e., $\Phi(\hat U) = \con(\Lcal, f)$. This shows that $\Phi$ is a Lagrangian immersion. The equivariance is obvious.
\end{proof}

\begin{Def}
  Let $(\phi, F)$ be a special K\"ahler pair of an affine special K\"ahler manifold $M$.
    We call the complex manifold $\hat M = \bC^*\times M$ together
    with the map $\Phi: \hat M\to\bC^{2n+2}$ the \emph{conification}
    of the special K\"ahler pair $(\phi, F)$ and we write
    $\Phi = \con(\phi, F)$.
    We say that the special K\"ahler pair $(\phi, F)$ is \emph{non-degenerate} if the immersion $\Phi$ is K\"ahlerian and $\hat\gamma(\Phi,\Phi) \neq 0$.
\end{Def}

\begin{Prop}\label{prop:Gskequivarianceoflocalconification}
  Let  $(\phi, F)$ be a special K\"ahler pair of an affine special K\"ahler manifold $M$. Then conification is equivariant with respect to the action of $\Gsk$ in the sense that $\con(x\cdot(\phi, F)) = \rho(x)\circ\con(\phi, F)$ for $x\in\Gsk$.
\end{Prop}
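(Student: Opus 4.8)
The plan is to reduce the statement to the already-established $\Gc$-equivariance of conification of Lagrangian pairs. Recall that the conification $\Phi = \con(\phi,F)$ was defined on $\hat M = \bC^*\times M$ by $\Phi(z^0,p) = z^0(1, (2F-z^tw)(p), \phi(p))$, and that locally, over an open set $U\subset M$ on which $\phi$ is an embedding with image $\Lcal = \phi(U)$ and associated Lagrangian potential $f$ (so $\phi^*f = 2F - z^tw$ by \Cref{lem:lagpotprepotcorrespondence}), we have $\Phi(\hat U) = \con(\Lcal, f)$ where $\hat U = \bC^*\times U$. Since $\rho(x)$ preserves $\{z^0 = 1\}$, the distribution $\langle\partial_{w_0}\rangle$, and the cone structure, and since conification of Lagrangian pairs is $\Gc$-equivariant (hence $\Gsk$-equivariant), it suffices to check that passing from $(\phi,F)$ to its local Lagrangian pair $(\Lcal,f)$ intertwines the $\Gsk$-action on special K\"ahler pairs with the $\Gsk$-action on Lagrangian pairs; this is essentially the content of the proof of \Cref{prop:gactiononprepotentials}, where it was shown that $x\cdot F$ is precisely the prepotential locally corresponding to the Lagrangian potential $x\cdot f$ via $x\phi$.

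Concretely, first I would fix $x = (X,s,v)\in\Gsk$ and a point of $\hat M$; after restricting to an open set $U\subset M$ on which $\phi$ is an embedding (and noting that $\bar\rho(x)\circ\phi$ is then also an embedding on $U$, since $\bar\rho(x)$ is an affine isomorphism of $\bC^{2n}$), I pass to the corresponding Lagrangian pair $(\Lcal, f)$. Next I would invoke the computation in \Cref{prop:gactiononprepotentials} showing that $(x\phi)^*(x\cdot f) = 2(x\cdot F) - z'^tw'$ where $(z',w') = x\phi$; equivalently, the Lagrangian pair associated to $x\cdot(\phi,F)|_U$ is exactly $x\cdot(\Lcal,f)$. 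Then $\con(x\cdot(\phi,F))$ restricted to $\bC^*\times U$ equals $\con(x\cdot(\Lcal,f))$, which by the $\Gc$-equivariance of conification of Lagrangian pairs (the last Proposition of Section~\ref{sec:conif-lagr-subm}) equals $\rho(x)\con(\Lcal,f) = \rho(x)\circ\con(\phi,F)|_{\bC^*\times U}$. Finally, since $\hat M$ is covered by such sets $\bC^*\times U$ and both sides are globally defined maps $\hat M \to \bC^{2n+2}$, the local identities glue to $\con(x\cdot(\phi,F)) = \rho(x)\circ\con(\phi,F)$ on all of $\hat M$.

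The only genuinely substantive point — and the one I would be most careful about — is the bookkeeping in the middle step: verifying that the transformation formula \eqref{eq:prepottransform} for $x\cdot F$ is exactly compatible, via \Cref{lem:lagpotprepotcorrespondence}, with the transformation formula \eqref{eq:transformoflagrangianpotential} for $x\cdot f$, so that the two $\Gsk$-actions really do correspond under $(\phi,F)\mapsto(\Lcal,f)$. This is already spelled out inside the proof of \Cref{prop:gactiononprepotentials}, so here it amounts to quoting that identity rather than re-deriving it; everything else (equivariance of $\con$ on Lagrangian pairs, the gluing) is either cited or formal. I would also remark that one must take the domain $\hat M = \bC^*\times M$ to be the same for $(\phi,F)$ and for $x\cdot(\phi,F)$, which is legitimate because the $\Gsk$-action changes only the immersion and the prepotential, not the underlying affine special K\"ahler manifold $M$.
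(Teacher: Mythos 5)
Your proposal is correct and takes exactly the route of the paper, whose entire proof is the one-line remark that conification locally corresponds to conification of Lagrangian pairs; you have simply made explicit the supporting facts (that $x\cdot F$ corresponds to $x\cdot f$ via \Cref{lem:lagpotprepotcorrespondence}, the $\Gc$-equivariance of $\con$ on Lagrangian pairs, and the gluing over a cover by sets $\bC^*\times U$) that the paper leaves implicit.
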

\begin{proof}
  This follows since conification locally corresponds to the conification of Lagrangian pairs.
\end{proof}

\begin{Th}\label{thm:localconification}
  Let $(\phi, F)$ be a non-degenerate special K\"ahler pair of an affine special K\"ahler manifold $M$. Then $\Phi=\con(\phi, F)$ induces on $\hat M$ the structure of a conical affine special K\"ahler manifold. This structure is independent of the representative of the equivalence class of $(\phi, F)$ in  $\Fcal(M)/G$.
\end{Th}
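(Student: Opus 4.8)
The plan is to reduce the theorem to the already-established facts about the conification of Lagrangian pairs and about equivariance, then invoke the standard characterization of conical affine special Kähler structures via equivariant Kählerian Lagrangian immersions from \cite{acd02}. First I would show that $\Phi = \con(\phi,F)$ is an \emph{equivariant} Kählerian Lagrangian immersion with respect to the $\bC^*$-action on $\hat M = \bC^*\times M$ generated by scalar multiplication and the linear $\bC^*$-action on $\bC^{2n+2}$. Equivariance is immediate from the explicit formula $\Phi(z^0,p) = z^0(1,(2F-z^tw)(p),\phi(p))$, which is manifestly homogeneous of degree one in $z^0$. That $\Phi$ is a Lagrangian immersion is exactly the content of the preceding proposition: locally $\Phi(\hat U) = \con(\Lcal,f)$ for the Lagrangian pair $(\Lcal,f)$ corresponding to $(\phi,F)|_U$ via \Cref{lem:lagpotprepotcorrespondence}, and $\con(\Lcal,f)$ is a Lagrangian cone by \Cref{prop:conificationislagrange}. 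The non-degeneracy hypothesis — that $\Phi$ is Kählerian and $\hat\gamma(\Phi,\Phi)\neq 0$ — then guarantees both that $\Phi$ induces a (pseudo-)Kähler metric $\hat g$ on $\hat M$ via the affine special Kähler construction applied to the Kählerian Lagrangian immersion $\Phi$, and that the associated vector field $\xi$ has $\hat g(\xi,\xi) = \hat\gamma(\Phi,\Phi) \neq 0$.

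Next I would identify the conical structure. By \cite{acd02}, a $\bC^*$-equivariant Kählerian Lagrangian immersion $\Phi$ induces an affine special Kähler structure $(\hat J,\hat g,\hat\nabla)$ on $\hat M$ together with the Euler-type vector field $\xi$ characterized by $Z := \xi - i\hat J\xi$ generating the $\bC^*$-action; one then checks $\hat\nabla\xi = \hat D\xi = \Id$, which is precisely the defining condition of a conical affine special Kähler manifold. The key identity $2\hat K = \hat g(Z,\widebar Z) = \hat\gamma(\Phi,\Phi)$ — recalled in the Preliminaries — shows that the conical condition $\hat g(\xi,\xi)\neq 0$ is equivalent to the second half of non-degeneracy. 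So the structure is well-defined exactly under the hypothesis stated.

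For the independence statement, I would use \Cref{prop:Gskequivarianceoflocalconification}: if $(\phi',F') = x\cdot(\phi,F)$ for $x\in\Gsk$, then $\con(\phi',F') = \rho(x)\circ\con(\phi,F)$, and $\rho(x)\in\Sp(\bC^{2n+2})$ is a complex linear symplectomorphism preserving the real structure up to the cases in $\Gsk\setminus G$. Since the affine special Kähler structure induced by a Kählerian Lagrangian immersion depends only on the immersion up to the action of $\Aff_{\Sp(\bR^{2n+2})}(\bC^{2n+2})$ — and more to the point, the induced $(\hat J,\hat g,\hat\nabla,\xi)$ is invariant under post-composition with such linear symplectic transformations that commute with the $\bC^*$-action — the resulting conical affine special Kähler structure on $\hat M$ is unchanged. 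Here one must be slightly careful: elements of $\Gsk$ that are not in $G$ act by $\rho(x)$ which need not preserve the Hermitian form $\hat\gamma$, hence need not preserve $\hat g$ pointwise on $\bC^{2n+2}$; but what matters is that the \emph{pulled-back} data $\Phi^*\hat\gamma$ and the induced flat connection transform correctly, and this follows because the transformation formula \eqref{eq:prepottransform} was precisely designed so that the pair, not just the immersion, is carried along consistently. This compatibility — checking that the two a priori different conical ASK structures obtained from two representatives genuinely coincide, rather than merely being isomorphic — is the main point requiring care, and it is handled by the $\Gsk$-equivariance of the whole conification construction established in \Cref{prop:Gskequivarianceoflocalconification}.
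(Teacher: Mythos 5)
The first two paragraphs of your argument are essentially the paper's proof: you establish that $\Phi=\con(\phi,F)$ is a $\bC^*$-equivariant K\"ahlerian Lagrangian immersion (Lagrangian by the preceding proposition, K\"ahlerian and with $\hat\gamma(\Phi,\Phi)\neq0$ by the non-degeneracy hypothesis), so that the induced structure is affine special K\"ahler with $\hat\nabla\xi=\hat D\xi=\Id$ and $\hat g(\xi,\xi)=\frac12\hat\gamma(\Phi,\Phi)\neq 0$, hence conical. That part is correct.

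The independence statement is where there is a genuine error. The theorem asserts invariance under the \emph{real} group $G$, i.e.\ well-definedness on $\Fcal(M)/G$, and the correct argument is short: if $(\phi',F')=x\cdot(\phi,F)$ with $x\in G$, then $\Phi'=\rho(x)\circ\Phi$ with $\rho(x)\in\Sp(\bR^{2n+2})$, which preserves $\hat\Omega$ \emph{and} the real structure $\tau$, hence $\hat\gamma=\sqrt{-1}\hat\Omega(\cdot,\tau\cdot)$, hence the induced metric, connection and Euler field. Your closing claim --- that for $x\in\Gsk\setminus G$ the pulled-back data $\Phi^*\hat\gamma$ still ``transform correctly'' because the prepotential transformation rule \eqref{eq:prepottransform} carries the pair along consistently --- is false. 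That transformation rule only guarantees that $x\cdot F$ is again a prepotential of $x\phi$; it does not make $\rho(x)^*\hat\gamma=\hat\gamma$. For $x\in\Gsk\setminus G$ (e.g.\ an imaginary shift of the constant in $F$) the conified prepotential changes by terms of the form $\sqrt{-1}\,c\,(Z^0)^2$, and the induced conical metric genuinely changes. Indeed, if your claim were true the deformation parameter $c$ in \Cref{thm:rmap} would have no effect on $(\widebar M_c,\widebar g_c)$, contradicting the completeness dichotomy and the scalar-curvature computations in Examples \ref{ex:ex1} and \ref{ex:ex2}. The entire mechanism of the paper's one-parameter deformation is precisely that $\Gsk$-equivalent pairs which are not $G$-equivalent yield non-isometric structures; so you should restrict the independence argument to $x\in G$ and delete the attempted extension to $\Gsk$.
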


\begin{proof}
  Let $\Phi$ be the conification of a non-degenerate special K\"ahler pair $(\phi, F)$. Then $\Phi$ is by definition a K\"ahlerian Lagrangian immersion of $\hat M$ inducing the special K\"ahler metric $\hat g = \Re\Phi^*(\hat\gamma)$. Since $\Phi$ is also equivariant with respect to the $\bC^*$-action, it follows that the real part $\xi := \Re(Z)$ of the vector field $Z$ generating the $\bC^*$ action satisfies $\nabla\xi = D\xi = \Id$. Its length is given by
  \begin{align}\label{eq:conicallength}
    \hat g(\xi,\xi)
    = \frac12\hat\gamma(\Phi,\Phi) = {|Z^0|}^2(\Im f + K) \neq 0
  \end{align}
  where $f = 2F - z^tw$ for $(z,w):=\phi$ and $K = \frac12\gamma(\phi,\phi)$. This shows that $\Phi$ induces on $\hat M$ a conical affine special K\"ahler structure.

  Let $(\phi', F')\in \Fcal(M)$ with $\Phi' = \con(\phi', F')$. Then $(\phi', F') = x\cdot (\phi, F)$ for a unique $x\in\Gc$ and by \Cref{prop:Gskequivarianceoflocalconification} $\Phi' = \rho(x)\circ\Phi$. Now $\Phi$ and $\Phi'$ induce the same conical affine K\"ahler structure on $\hat M$ if and only if $\rho(x)\in\Sp(\bR^{2n+2})$ which is the case if and only if $x\in G$.
\end{proof}

\begin{Prop}\label{prop:nondegiffpsk} Let $(\phi, F)$ be a special K\"ahler pair defined on $U\subset M$ and set $f = 2F - z^tw$ for $(z,w) := \phi$ and $K = \frac12\gamma(\phi, \phi)$. Then $(\phi, F)$ is non-degenerate if and only if $\Im f + K \neq 0$ and $\widebar\omega:=\frac i2\partial\widebar\partial\log|\Im f + K|$ is non-degenerate.
\end{Prop}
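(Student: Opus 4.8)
The plan is to deduce the statement from the length computation already carried out inside the proof of \Cref{thm:localconification}, being careful that at this point we may \emph{not} assume $\Phi:=\con(\phi,F)$ to be K\"ahlerian: the induced symmetric tensor $\hat g:=\Re\Phi^*\hat\gamma$ on $\hat M=\bC^*\times U$ could a priori be degenerate, so the facts usually quoted for conical affine special K\"ahler manifolds (orthogonal splitting by the vertical distribution, constancy of $\diff\hat K$ along horizontal directions, existence of a reduced K\"ahler form) must be re-obtained directly from the $\bC^*$-equivariant holomorphic Lagrangian immersion $\Phi$ under the single hypothesis that $\hat K:=\tfrac12\hat\gamma(\Phi,\Phi)$ is nowhere zero. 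I would split the asserted equivalence accordingly.

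First, the chain \eqref{eq:conicallength}, read as $\hat g(\xi,\xi)=\tfrac12\hat\gamma(\Phi,\Phi)=|Z^0|^2(\Im f+K)$ with $\xi=\Re Z$, is purely algebraic in the holomorphic data (it uses only the explicit formula for $\Phi$ and $K=\tfrac12\gamma(\phi,\phi)$) and uses no non-degeneracy; since $|Z^0|^2>0$, the function $\hat\gamma(\Phi,\Phi)$ is nowhere zero precisely when $\Im f+K$ is, so that half of the claim is immediate. Assume this from now on, so $\hat K=\hat g(\xi,\xi)$ is nowhere zero. Let $\mathcal D:=\Span_\bR\{\xi,\hat J\xi\}$, the tangent distribution to the $\bC^*$-orbits, hence $\mathcal D=\ker\diff\pi$ for the projection $\pi:\bC^*\times U\to U$ (note $\xi\neq0$ since $\hat g(\xi,\xi)=\hat K\neq0$). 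Using that $\hat g$ is $\hat J$-Hermitian one gets $\hat g(\hat J\xi,\hat J\xi)=\hat K$, $\hat g(\xi,\hat J\xi)=0$, so $\hat g|_{\mathcal D}$ is non-degenerate; therefore $T\hat M=\mathcal D\oplus\mathcal D^\perp$ orthogonally, $\hat g=\hat g|_{\mathcal D}\oplus\hat g|_{\mathcal D^\perp}$, and — a Hermitian form having the same radical as its real part — $\Phi$ is K\"ahlerian iff $\hat g$ is non-degenerate iff $\hat g|_{\mathcal D^\perp}$ is non-degenerate. Finally, exactly as in the derivation of \eqref{eq:conicallength}, $\bC^*$-equivariance gives $\diff\hat K(X)=\Re\hat\gamma(\diff\Phi(X),\Phi)$ proportional, with a nonzero universal constant, to $\hat g(X,\xi)$ for every $X$; in particular $\diff\hat K$, and hence $\partial\hat K$ and $\bar\partial\hat K$ ($\mathcal D^\perp$ being $\hat J$-invariant), vanish on $\mathcal D^\perp$.

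It then remains to recognise $\widebar\omega$ as the reduced K\"ahler form along $\mathcal D^\perp$. Since $\hat K$ is a K\"ahler potential for the induced form $\hat\omega:=\hat g(\cdot,\hat J\cdot)$, i.e.\ $\hat\omega=-\tfrac i2\partial\bar\partial\hat K$ — an algebraic identity in the holomorphic data, like \eqref{eq:pskmetric}, not requiring the K\"ahler condition — the expansion $\tfrac i2\partial\bar\partial\log|\hat K|=-\hat\omega/\hat K+\tfrac i2\,\partial\hat K\wedge\bar\partial\hat K/\hat K^2$ restricts on $\mathcal D^\perp$ (where $\partial\hat K=0$) to $\tfrac i2\partial\bar\partial\log|\hat K|\big|_{\mathcal D^\perp}=-\hat\omega|_{\mathcal D^\perp}/\hat K$. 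On the other hand $\hat K=|Z^0|^2\,\pi^*(\Im f+K)$ and $\log|Z^0|^2$ is pluriharmonic, so $\tfrac i2\partial\bar\partial\log|\hat K|=\pi^*\!\big(\tfrac i2\partial\bar\partial\log|\Im f+K|\big)=\pi^*\widebar\omega$. Hence $\pi^*\widebar\omega|_{\mathcal D^\perp}=-\hat\omega|_{\mathcal D^\perp}/\hat K$, and since $\diff\pi$ restricts to a linear isomorphism $\mathcal D^\perp\to TU$ and $\hat K$ is nowhere zero, $\widebar\omega$ is non-degenerate iff $\hat\omega|_{\mathcal D^\perp}$ is non-degenerate iff $\hat g|_{\mathcal D^\perp}$ is non-degenerate (it is $\hat J$-Hermitian) iff $\Phi$ is K\"ahlerian. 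Combined with the first half this gives the equivalence.

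The step I expect to be the main obstacle is precisely the one forced by not yet knowing $\hat g$ to be non-degenerate: the orthogonal decomposition $T\hat M=\mathcal D\oplus\mathcal D^\perp$, the vanishing of $\diff\hat K$ on $\mathcal D^\perp$, and the potential identity $\hat\omega=-\tfrac i2\partial\bar\partial\hat K$ all have to be re-proved straight from the $\bC^*$-equivariant Lagrangian immersion $\Phi$ with the single hypothesis $\hat K\neq0$, rather than imported from the established theory of conical affine special K\"ahler manifolds; correspondingly one must be careful that the standard formulas invoked (such as \eqref{eq:pskmetric} and $\pi^*\widebar\omega=\tfrac i2\partial\bar\partial\log|\hat K|$) are used only in the form that is genuinely algebraic in the holomorphic data, the K\"ahler condition being needed only to conclude that the reduced tensor is an honest (pseudo-)Riemannian metric.
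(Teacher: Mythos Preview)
Your proposal is correct and follows the same route the paper has in mind: the paper's own proof is the single sentence ``This follows easily from \eqref{eq:conicallength} and \eqref{eq:pskmetric}'', and you are unpacking exactly those two references. The one point on which you are noticeably more careful than the paper is the direction of the equivalence in which $\Phi$ is not yet known to be K\"ahlerian: you rightly re-derive the orthogonal splitting $T\hat M=\mathcal D\oplus\mathcal D^\perp$, the vanishing of $\diff\hat K$ on $\mathcal D^\perp$, and the potential identity $\hat\omega=-\tfrac{i}{2}\partial\bar\partial\hat K$ directly from the holomorphic data under the sole hypothesis $\hat K\neq0$, whereas the paper's reference to \eqref{eq:pskmetric} tacitly presupposes a conical affine special K\"ahler structure. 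Your observation that these identities are purely algebraic in the Lagrangian immersion (hence valid before non-degeneracy is established) is exactly what is needed to make the argument rigorous in both directions.
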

\begin{proof}
  This follows easily from \cref{eq:conicallength,eq:pskmetric}
\end{proof}

\begin{rem}\label{rem:localconificationformulas}
  A special K\"ahler domain $M\subset\bC^n$ with coordinates $z^1,\ldots,z^n$ of $\bC^n$ and prepotential $F:M\to\bC$ determines a special K\"ahler pair $(\phi, F)$ by setting $\phi = \diff F: M\to T^*\bC^n=\bC^{2n}$. Then the conification
  \begin{equation}
    \label{eq:conificationofskdomain}
    \begin{aligned}
      \hat M &= \{(Z^0,Z^1,\ldots,Z^n)\in\bC^*\times\bC^{n} \mid Z^i/Z^0\in M, i=1,\ldots,n\}, \\
      \Phi &= \con(\diff F, F): \hat M \to \bC^{2n+2}
    \end{aligned}
  \end{equation}
  is the graph of $\diff\hat F$, where $\hat F$ is a holomorphic homogeneous function of degree 2 given by
    \begin{equation}
    \label{eq:conifiedprepotential}
    \hat F(Z^0,\ldots,Z^n) = {(Z^0)}^2 F\left( \frac{Z^1}{Z^0},\ldots,\frac{Z^n}{Z^0} \right).
  \end{equation}
  The special K\"ahler pair $(\phi, F)$ is non-degenerate if and only if the matrix given by $\Im\left(\frac{\partial^2\hat F}{\partial Z^I\partial Z^J}\right)$ for $I,J=0,\ldots,n$ is invertible and
  \begin{equation}
    \begin{aligned}
      \hat K(Z^0,\ldots,Z^n)
      &= \sum_{I=0}^n\Im\left(\widebar Z^I\frac{\partial F}{\partial Z^I}\right)\\
      &= \left| Z^0 \right|^2 \left( K(z^1,\ldots,z^n) + \Im(f(z^1,\ldots,z^n)) \right)
    \end{aligned}
  \end{equation}
  is non-zero, where $z^i = Z^i/Z^0$, $f = 2F - \sum_{i=1}^n z^i\frac{\partial F}{\partial z^i}$, and $K=\sum_{i=1}^n\Im(\widebar z^i\frac{\partial F}{\partial z^i})$. Note that in this case, $\hat K=\frac12\hat\gamma(\Phi,\Phi)$ is the K\"ahler potential, $\Im\left(\frac{\partial^2\hat F}{\partial Z^I\partial Z^J}\right)=\frac{\partial^2\hat K}{\partial Z^I\partial\widebar Z^J}$ are the components of the metric, and
  \begin{equation}
    K'(z^1,\ldots,z^n) := -\log|K(z^1,\ldots,z^n) + \Im(f(z^1,\ldots,z^n))| = -\log|\hat K(1,z^1,\ldots,z^n)|
  \end{equation}
  gives a K\"ahler potential of the projective special K\"ahler metric $\widebar g$ defined on $\hat M/\bC^*\cong M$.
\end{rem}

\begin{Ex}
  Let $M\subset\bC^n$ with standard coordinates $(z^1,\ldots,z^n)$ be an affine special K\"ahler domain with a holomorphic prepotential $F = \sum_{i,j=1}^n a_{ij}z^iz^j +\frac12C$ for $a_{ij},C\in\bC$. Note how the parameter $C$ does not affect the affine special K\"ahler geometry of $M$. We have $K = \sum_{i,j=1}^nz^i\widebar z^j\Im(a_{ij})$ and $f = 2F - \sum_{i=1}^n z^i\frac{\partial F}{\partial z^i} = C$. Consider the conification of the special K\"ahler pair $(\diff F, F)$. We denote by $(Z^0,\ldots,Z^n)$ the homogeneous coordinates on $\bC^*\times M$. The holomorphic prepotential $\hat F$ of the conification is then given by $\hat F(Z^0,Z) = \sum_{i,j=1}^n a_{ij}Z^iZ^j + C{(Z^0)}^2$.
  The matrix
  \begin{equation}
    \left(\Im\frac{\partial^2\hat F}{\partial Z^I\partial Z^J}\right)_{I,J=0,\ldots,n}
    =
    \begin{pmatrix}
      \Im C & 0 \\
      0 & (\Im a_{ij})_{i,j=1,\ldots,n}
    \end{pmatrix}
  \end{equation}
  is non-degenerate if and only if $c:=\Im C\neq 0$. Thus $(\diff F, F)$ is non-degenerate if and only if $c\neq0$ and $K +\Im f =  K +c \neq 0$ on $M$.

  Assuming $(\diff F, F)$ is non-degenerate, then the projective special K\"ahler metric $\widebar g$ on $M$ is given by
  \begin{align}
    \widebar g
    &= -\sum_{i,j=1}^n\frac{\partial^2}{\partial z^i\partial\widebar z^j}\log|K+c| \\
    &= -\frac1{K+c} g  + \frac{1}{(K+c)^2}(\partial K)(\widebar\partial K),
  \end{align}
  where $g$ is the affine special K\"ahler metric of $M$.
\end{Ex}

\subsection{The ASK/PSK-correspondence}
In this section we will give a global description of the conification procedure of the previous section and establish the ASK/PSK-correspondence which will assign a projective special K\"ahler manifold to any affine special K\"ahler manifold given a non-degenerate special K\"ahler pair. For this, we will prove that every affine special K\"ahler manifold admits a flat principal $\Gsk$-bundle. Using this bundle, we show that if the holonomy of the flat connection is contained in the group $G\subset\Gsk$, then the local conification of a non-degenerate special K\"ahler pair $(\phi, F)$ can be extended to the largest domain on which analytic continuation of $(\phi, F)$ is non-degenerate.



\begin{Lem}\label{lem:sheafflatbundle}
  Let $G$ be a Lie group and $\Fcal$ be a presheaf on a manifold $M$ with values in the category of principal homogeneous $G$-spaces. Then the disjoint union of stalks $P:=\dot\cup_{p\in M}\Fcal_p$ carries the structure of a principal $G$-bundle $\pi: P \to M$ with a flat connection $1$-form $\theta$ such that the horizontal sections of $P$ over $U$ are given by $\Fcal(U)$.
\end{Lem}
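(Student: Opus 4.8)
The plan is to build $P$ as a set first, then equip it with topology and smooth structure by pulling back from local trivializations, and finally identify the flat connection. First I would make the construction concrete: since $\Fcal$ is a presheaf of principal homogeneous $G$-spaces, for each $p\in M$ the stalk $\Fcal_p := \varinjlim_{U\ni p}\Fcal(U)$ is again a principal homogeneous $G$-space (the direct limit is taken over the restriction maps, which are $G$-equivariant bijections, so each $\Fcal_p$ is nonempty and $G$ acts simply transitively on it). Set $P := \dot\cup_{p\in M}\Fcal_p$ with the obvious projection $\pi: P \to M$ and the fiberwise $G$-action; this action is free and fiberwise transitive by construction.

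Next I would produce local trivializations from local sections. For any open $U\subset M$ and any $s\in\Fcal(U)$, let $s_p\in\Fcal_p$ denote the germ of $s$ at $p$; since restriction maps are bijections, every point of $\Fcal_p$ for $p\in U$ is of the form $g\cdot s_p$ for a unique $g\in G$, giving a bijection $\Psi_{U,s}: \pi^{-1}(U)\to U\times G$, $g\cdot s_p\mapsto (p,g)$. Because $\Fcal$ is a presheaf (not necessarily a sheaf), I should first pass to an open cover $\{U_\alpha\}$ of $M$ on which $\Fcal(U_\alpha)\neq\emptyset$ — this holds because stalks are nonempty, so each point has a neighborhood $U$ with $\Fcal(U)\neq\emptyset$. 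Pick $s_\alpha\in\Fcal(U_\alpha)$ and use the $\Psi_{U_\alpha,s_\alpha}$ to declare a subset of $P$ open iff its image under each $\Psi_{U_\alpha,s_\alpha}$ is open, and to transport the smooth structure of $U_\alpha\times G$. The transition functions $\Psi_{U_\beta,s_\beta}\circ\Psi_{U_\alpha,s_\alpha}^{-1}$ act on $U_{\alpha\beta}\times G$ by $(p,g)\mapsto (p, g\cdot g_{\alpha\beta}(p))$ where $g_{\alpha\beta}(p)\in G$ is the unique element with $(s_\alpha)_p = g_{\alpha\beta}(p)\cdot(s_\beta)_p$. The map $g_{\alpha\beta}$ is \emph{locally constant}: on the open set where $s_\alpha$ and $s_\beta$ are both defined as honest sections (i.e.\ on $U_\alpha\cap U_\beta$, shrinking if the germs only agree on a smaller set — but in a presheaf of principal homogeneous spaces over a connected component the element realizing the germ comparison is forced to be constant by transitivity and the germ relation), the comparison element is the same $G$-element for all nearby points. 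This locally constant transition cocycle is exactly the data of a flat principal $G$-bundle, and it defines a connection $1$-form $\theta$ whose local expression in the trivialization $\Psi_{U_\alpha,s_\alpha}$ is the Maurer–Cartan form pulled back from the $G$-factor, with zero curvature since the $g_{\alpha\beta}$ are locally constant.

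Finally I would check that horizontal sections over $U$ are precisely $\Fcal(U)$: a section $\sigma: U\to P$ is horizontal iff in each trivialization it is locally constant in the $G$-direction, iff it agrees locally with germs of a fixed element of $\Fcal$, iff (using that $\Fcal$ is a presheaf, so locally-agreeing-with-an-element is the same as being an element, on connected $U$; for general $U$ one works component by component) $\sigma = s_p$ for a unique $s\in\Fcal(U)$. The main obstacle I anticipate is the bookkeeping around the distinction between germs and honest sections — showing that the transition elements $g_{\alpha\beta}$ are genuinely locally constant, hence the bundle is flat, which ultimately rests on the fact that in a principal homogeneous $G$-space the element carrying one point to another is \emph{unique}, so two sections whose germs are related at one point are related by that same constant element throughout the connected overlap. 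Everything else (nonemptiness of stalks, simple transitivity on stalks, the cocycle condition for $g_{\alpha\beta}$) is formal and follows from the direct-limit description.
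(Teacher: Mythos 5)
Your proposal is correct and follows essentially the same route as the paper: show via simple transitivity that evaluation of sections at a point is a bijection onto the stalk, use local sections to build $G$-equivariant trivializations whose transition elements are the unique constants carrying one section to another, and conclude flatness (local connection forms zero) with horizontal sections identified with $\Fcal(U)$. The only cosmetic difference is that you phrase the transition data as ``locally constant'' germ comparisons while the paper works directly with the unique $x_{\alpha\beta}\in G$ satisfying $s_\alpha = x_{\alpha\beta}\cdot s_\beta$ on the (nonempty) overlap $\Fcal(U_{\alpha\beta})$; these are the same argument.
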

\begin{proof}
  Fix a point $p\in M$ and a neighborhood $U$ of $p$ such that $\Fcal(U)\neq\emptyset$. We claim that evaluation of sections, i.e., the map taking a section $s\in\Fcal(U)$ to its germ $[s]_p\in\Fcal_p$, is a bijection. Let $[s_V]_p\in\Fcal_p$, where $s_V\in\Fcal(V)$ for some open neighborhood $V$ of $p$. Without loss of generality, we can assume $V\subset U$. If $s\in F(U)$ is a section, then there is a unique $x\in G$ such that $x\cdot s\vert_V = s_V$. Hence, $x\cdot s$ and $s_V$ define the same germ at $p$. This shows the surjectivity. Now let $s,\tilde s=x\cdot s\in\Fcal(U)$ such that $[s]_p = [\tilde s]_p$. Then there is a neighborhood $V\subset U$ of $P$ such that $s\vert_V = \tilde s\vert_V$. Since $s = x\cdot \tilde s$ for a unique $x\in G$ this implies $x=e$, where $e\in G$ is the neutral element, showing the injectivity. It follows that the stalks of $\Fcal$ are also principal homogeneous $G$-spaces with $G$-action defined as $x\cdot[s]_p = [x\cdot s]_p$.

  Set $P=\dot\cup_{p\in M}\Fcal_p$ and $\pi: P\to M$, $[s]_p\mapsto p$. We can now consider a section $s\in\Fcal(U)$ as a section of $P$ over $U$ by setting $s(p):=[s]_p$. Choose an open covering $\mathcal U = (U_\alpha)_{\alpha\in I}$ such that $\Fcal(U_\alpha)\neq\emptyset$ and for each $U_\alpha$ pick a section $s_\alpha\in\Fcal(U_\alpha)$. Define $G$-equivariant maps $\Psi_\alpha: \inv\pi(U_\alpha) \to U_\alpha\times G$ such that $\Psi_\alpha(s_\alpha(p)) = (p,e)$. These maps are bijective by the first part of the proof. Let $U_{\alpha\beta} = U_\alpha\cap U_\beta$ be a non-empty overlap. Then $\Fcal(U_{\alpha\beta}) \neq \emptyset$
  and by the simply transitive action of $G$ on $\Fcal(U_{\alpha\beta})$ there is a unique $x_{\alpha\beta}\in G$ such that $s_\alpha = x_{\alpha\beta}s_\beta$, showing that the transition maps
  \begin{align}
    \Psi_{\alpha\beta}(p,g) := (\Psi_\beta\circ\inv{\Psi_\alpha})(p, g)
    = \Psi_\beta(g\cdot s_\alpha(p))
    = \Psi_\beta(gx_{\alpha\beta}\cdot s_\beta(p))
    = (p, gx_{\alpha\beta})
  \end{align}
    are smooth and the transition functions $g_{\alpha\beta}: U_{\alpha\beta}\to\Gsk$, $g_{\alpha\beta}(p) = x_{\alpha\beta}$ are constant. On a non-empty overlap $U_{\alpha\beta\gamma} = U_\alpha\cap U_\beta \cap U_\gamma$ we have $s_\beta = x_{\beta\gamma} \cdot s_\gamma$ and $s_\alpha = x_{\alpha\beta} \cdot s_\beta = x_{\alpha\beta}x_{\beta_\gamma}\cdot s_\gamma$. Hence, the transition functions satisfy $g_{\alpha\gamma} = g_{\alpha\beta}g_{\beta\gamma}$. This shows that $\pi: P\to M$ is a principal $\Gsk$ bundle, see, e.g., \cite{kobayashinomizu1}*{Chapter 1, Proposition 5.2}).

  The transformation rule for local connection 1-forms $\theta_\alpha\in\Omega^1(U_\alpha, \Lie(\Gsk))$ is
  \begin{equation}
    \theta_\beta = \Ad(\inv{g_{\alpha\beta}})\theta_\alpha + \inv{g_{\alpha\beta}}\diff g_{\alpha\beta}
  \end{equation}
  for transition functions $g_{\alpha\beta}: U_{\alpha\beta}\to G$. In our case, the transition functions $g_{\alpha\beta}(p) = x_{\alpha\beta}$ are constant. Thus we see that setting $\theta_\alpha = 0$ defines a flat connection 1-form $\theta$ on $P$.

  In the above we have seen that a section $s\in \Fcal(U)$ gives a local trivialization $\Psi: \inv\pi(U) \to U\times G$. A section $\tilde s$ of $\inv\pi(U)$ is horizontal with respect to $\theta$ if and only if it is constant in this trivialization. Thus it is of the form $\tilde s(p) = [x\cdot s]_p$ for some $x\in G$. Under the identification $\Fcal_p \cong \Fcal(U)$, $\tilde s$ thus corresponds to $x\cdot s \in \Fcal(U)$, completing the proof.
\end{proof}

Now let $(M, J, g, \nabla)$ be an affine special K\"ahler manifold of complex dimension $n$. Consider the map $\mathcal F$ assigning to each open subset $U$ of $M$ the set $\Fcal(U)$ of special K\"ahler pairs of $U$. The map $\mathcal F$ is a sheaf with values in the category of $\Gsk$-principal homogeneous spaces. The restriction map is given by $(\phi, F)\vert_V = (\phi\vert_V, F\vert_V)$. By \Cref{lem:sheafflatbundle} the sheaf $\Fcal$ thus defines a flat principal $\Gsk$-bundle $\pi: P\to M$ with flat connection 1-form $\theta$ where $P = \dot\cup_{p\in M}\Fcal_p$.

\begin{Def}
  We call the flat principal $\Gsk$-bundle of germs of special K\"ahler pairs  $\pi: P \to M$ the \emph{bundle of special K\"ahler pairs}.
\end{Def}

\begin{Def}
  \begin{enumerate}
  \item We call a germ $u$ in the fiber $P_p$ \emph{non-degenerate} if
    there is a non-degenerate special K\"ahler pair $(\phi, F)$ of an open neighborhood of $p$ such that $[(\phi, F)]_p = u$. Note that every fiber contains at least one non-degenerate germ by \Cref{prop:kaehlerianconificationexistence}.
  \item Let $u=[(\phi, F)]_p$ be a non-degenerate germ in the fiber $P_p$ and $(\phi, F)$ be a non-degenerate special K\"ahler pair. Define $\dom(u)\subset M$ to be the set of points in $M$ that are connected to $p$ via a path $\gamma$ along which the analytic continuation of $(\phi, F)$ is non-degenerate. We call $\dom(u)$ the \emph{domain of non-degeneracy of $u$}.
\end{enumerate}
\end{Def}
Note that analytic continuation of a special K\"ahler pair $(\phi, F)$ defined on a neighborhood of a point $p$ along a path $\gamma$ corresponds to parallel transport of the germ $u=[(\phi,F)]_p\in P_p$ along $\gamma$. Therefore, if $u$ is non-degenerate, then a point $p'\in M$ is in $\dom(u)$ if and only if there is a horizontal path from $u$ to the fiber over $p'$ such that all points of $\gamma$ are non-degenerate.

\begin{Th}
  \label{thm:conditionsglobalconification}
  Let $M$ be a connected affine special K\"ahler manifold of complex dimension $n$ and $\pi: P\to M$ be the bundle of special K\"ahler germs of $M$ with its flat connection 1-form $\theta$. Assume that $\Hol(\theta) \subset G$. Let $u\in P$ be a non-degenerate point. Then the manifold $\hat M_u:= \bC^*\times \dom(u)$ carries a conical affine special K\"ahler structure.
\end{Th}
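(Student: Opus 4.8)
The plan is to reduce the global statement to the local construction already carried out in \Cref{thm:localconification}, using the flat $\Gsk$-bundle $\pi\colon P\to M$ and the hypothesis $\Hol(\theta)\subset G$ to ensure that the local conical affine special K\"ahler structures patch together consistently. Fix the non-degenerate point $u\in P_{p_0}$ and write $D:=\dom(u)\subset M$; by hypothesis $D$ is a nonempty open subset containing $p_0$ (it is open because non-degeneracy is an open condition, as it amounts to the non-vanishing of $\Im f+K$ and the non-degeneracy of a certain $(1,1)$-form, cf.\ \Cref{prop:nondegiffpsk}). We want to put a conical affine special K\"ahler structure on $\hat M_u:=\bC^*\times D$.

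First I would produce, for each $q\in D$, a distinguished non-degenerate germ $u_q\in P_q$: namely, parallel-transport $u$ along any path in $D$ from $p_0$ to $q$. Since $D$ need not be simply connected this a priori depends on the path, but the holonomy assumption $\Hol(\theta)\subset G$ guarantees that the ambiguity lies in $G$, and — crucially — by the last sentence of the proof of \Cref{thm:localconification} two special K\"ahler pairs related by an element of $G$ induce \emph{the same} conical affine special K\"ahler structure on the conification $\hat M$. So, working on a small enough neighborhood $U\ni q$ on which $u_q$ is represented by an honest non-degenerate special K\"ahler pair $(\phi,F)$ (using that $u$ stays non-degenerate along the transport path), \Cref{thm:localconification} endows $\hat U := \bC^*\times U$ with a conical affine special K\"ahler structure $(\hat J,\hat g,\hat\nabla,\xi)$, and this structure is independent of all choices: independent of the representative pair by \Cref{thm:localconification}, and independent of the transport path by the $G$-invariance just recalled.

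Next I would check that these locally defined structures agree on overlaps. Given $q_1,q_2\in D$ with neighborhoods $U_1,U_2$ as above and $U_{12}:=U_1\cap U_2\neq\emptyset$, the parallel-transported germs $u_{q_1},u_{q_2}$ restrict over $U_{12}$ to germs differing by a fixed element of $\Gsk$ (simple transitivity of $\Gsk$ on $\Fcal(U_{12})$); but since both arise by transporting the \emph{same} $u$, and the discrepancy is pinned down by the flat connection, the holonomy hypothesis forces that element to lie in $G$. Hence the two conical affine special K\"ahler structures on $\hat U_1$ and $\hat U_2$ restrict to the same structure on $\bC^*\times U_{12}$, again by the $G$-invariance clause of \Cref{thm:localconification}. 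Therefore the local data glue to a globally defined conical affine special K\"ahler structure on $\hat M_u=\bC^*\times D$; the $\bC^*$-action on $\hat M_u$ is the obvious one, and the Euler field $\xi=\Re Z$ it generates satisfies $\hat\nabla\xi=\hat D\xi=\Id$ and $\hat g(\xi,\xi)=|Z^0|^2(\Im f+K)\neq 0$ on all of $\hat M_u$ precisely because every point of $D$ is non-degenerate.

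The main obstacle I anticipate is purely bookkeeping rather than conceptual: one must set up the identification ``analytic continuation of $(\phi,F)$ along $\gamma$'' $=$ ``parallel transport of $u$ along $\gamma$ in $(P,\theta)$'' carefully (this is the remark immediately preceding the theorem) so that ``every point of $\dom(u)$ is non-degenerate along some horizontal path'' literally yields, on a neighborhood of each such point, a non-degenerate special K\"ahler pair to feed into \Cref{thm:localconification}; and one must verify that the resulting conical structures are not merely locally isomorphic but literally equal as tensor fields on the overlaps — which is exactly what the ``this structure is independent of the representative\ldots'' statement of \Cref{thm:localconification} provides, once the holonomy hypothesis has been used to land the transition elements in $G$. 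No genuinely new computation is needed beyond what is already in \Cref{thm:localconification} and \Cref{lem:sheafflatbundle}.
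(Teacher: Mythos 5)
Your proposal is correct and takes essentially the same route as the paper: cover $\dom(u)$ by neighborhoods carrying non-degenerate special K\"ahler pairs whose germs are obtained by parallel transport of $u$ (equivalently, lie in the reduced $\Hol(\theta)$-subbundle $P(u)$), conify each locally via \Cref{thm:localconification}, and use $\Hol(\theta)\subset G$ together with the $G$-invariance clause of that theorem to conclude that the local conical structures agree on overlaps. The only point you leave slightly implicit --- that non-degeneracy of one germ in a fiber of $P(u)$ over a point of $\dom(u)$ forces non-degeneracy of all germs in that fiber, which is what justifies transporting along an arbitrary path in $\dom(u)$ --- is exactly the first observation made in the paper's proof.
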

\begin{proof}
  Due to the condition on the holonomy, we can reduce the bundle $\pi: P\to M$ and the connection 1-form $\theta$ to a $\Hol(\theta)$-bundle
  \begin{equation}
    P(u) := \{u' \in P \mid \text{there is a $\theta$-horizontal path connecting $u$ and $u'$}\} \subset P.
  \end{equation}
  First note that if $u'\in P(u)_{p'}$ is a non-degenerate germ in the fiber over $p'$, then all germs in the fiber are non-degenerate. Indeed, if $u''\in P(u)_{p'}$, then $u'' = x\cdot u'$ for some $x\in\Hol(\theta)\subset G$. Thus if $(\phi', F')$ is the non-degenerate special K\"ahler pair corresponding to $u'$ then $\con(x\cdot(\phi', F')) = \rho(x)\con(\phi', F')$ is K\"ahlerian since $\rho(x)\in\Sp(\bR^{2n})$ for all $x\in G$.

  By the definition of $\dom(u)$ the fibers of $P(u)\vert_{\dom(u)}$ are all non-degenerate.
  Hence, we can find an open covering $\mathcal U = (U_\alpha)_{\alpha\in I}$ of $\dom(u)$ and non-degenerate special K\"ahler pairs $(\phi_\alpha, F_\alpha)\in\Fcal(U_\alpha)$ such that $[(\phi_\alpha, F_\alpha)]_p \in P(u)_p$ for all $p\in \dom(u)$.
  This gives a covering $\hat{\mathcal U} = (\hat U_\alpha) := (\bC^*\times U_\alpha)_{\alpha\in I}$ and conic K\"ahlerian Lagrangian immersions $\Phi_\alpha =  \con(\phi_\alpha,F_\alpha): \hat U_\alpha\to\bC^{2n+2}$.
  The induced conical affine special K\"ahler structure on $\hat U_\alpha$ is independent of the choice of special K\"ahler pairs $(\phi_\alpha, F_\alpha)$ for each $\alpha\in I$ by \Cref{thm:localconification} and agrees on overlaps, since the transistion functions take values in $\Sp(\bR^{2n+2})$. This shows that the $\Phi_\alpha$ induce a well-defined conical affine special K\"ahler structure on $\hat M_u = \bC^*\times \dom(u)$.
\end{proof}


The $\bC^*$-action on $\hat M_u$ is principal. Hence, the quotient $\widebar M_u = \hat M_u/\bC^*$ is projective special K\"ahler with metric $\widebar g_u$ given by \cref{eq:pskmetric}. In particular, a K\"ahler potential of $\widebar g_u$ is given by $K'_u(p) := -\log |\hat K_u(1,p)|$ for $p\in\dom(u)$.

\begin{Def}\label{def:askpsk}
  We call the map taking the affine special K\"ahler manifold $(M,g)$ and a special K\"ahler germ $u$ of $M$ to the projective special K\"ahler manifold $(\widebar M_u, \widebar g_u)$ the \emph{ASK/PSK-correspondence}.
\end{Def}

\section{Completeness of Hessian metrics associated with a hyperbolic centro\-affine hypersurface}
\label{sec:elemdefs}
In this section we will prove a completeness result for a one-parameter deformation of a positive definite Hessian metric with Hesse potential of the form $-\log h$ where $h$ is a homogeneous function on a domain in $\bR^n$. The latter metric is isometric to a product of the form $\diff r^2 + g_\Hcal$, where $g_\Hcal$ is proportional to the canonical metric on
a centroaffine hypersurface $\Hcal\subset\bR^n$. This will be specialized in \cref{sec:r-map} to the case of a cubic polynomial $h$ and related to the r-map.

Let $U\subset\bR^{n}$ be a domain such that $\bR^{>0}\cdot U\subset U$ and let $h:U\to\bR$ be a smooth positive homogeneous function of degree $k>1$.
Then
$\Hcal:= \{h=1\}\subset U$ is a smooth hypersurface and $U = \bR^{>0}\cdot\Hcal$. We assume that for $g_U:=-\partial^2h$ the metric $g_\Hcal := \iota^*g_U$ is positive definite, where $\iota:\Hcal\hookrightarrow U$ is the inclusion.
The manifold $\left(\Hcal, \frac1kg_\Hcal\right)$ is a hyperbolic centroaffine hypersurface in the sense of \cite{cns16}.

\begin{Def}
  If $h$ is a cubic homogeneous polynomial, then the manifold $(\Hcal, g_\Hcal)$, defined as above, is called a \emph{projective special real manifold}.
\end{Def}

Let $\prim g := -\partial^2\log h = \frac1hg_U+\frac1{h^2}(\diff h)^2$.
Denote by $\xi:=x^i\partial_{x^i}$ the position vector field on $U$ and by $E\subset TU$ the distribution of tangent spaces tangent to the level sets of $h$. Then $TU$ decomposes into
\begin{equation}
  \label{eq:TUdecomp}
  TU = E\oplus
  \left<
    \xi
  \right>.
\end{equation}

\begin{Prop}
  \label{prop:formulasprimg}
  The bilinear form $\check g := g_U - \frac{g_U(\xi,\cdot)^2}{g_U(\xi,\xi)}$ is positive semidefinite with kernel $\bR\xi$, and we can write
  \begin{align+}
    g_U       &= \check g - \frac{k-1}{kh}(\diff h)^2,
                \label{eq:gu}\\
    \prim g &= \frac1h\check g + \frac1{kh^2}(\diff h)^2.
              \label{eq:gprim}
  \end{align+}
  In particular, $g_U$ is a Lorentzian metric, $\prim g$ is a Riemannian metric on $U$, and the decomposition (\ref{eq:TUdecomp}) is orthogonal with respect to $g_U$ and $\prim g$.

\end{Prop}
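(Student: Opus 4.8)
The plan is to reduce everything to Euler's identity for $h$, a short linear-algebra computation, and one scaling argument that propagates the positivity of $g_\Hcal$ from $\Hcal$ to all of $U$.

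First I would record the consequences of homogeneity. Euler's theorem gives $\diff h(\xi)=kh$, and since the functions $\partial_ih$ are homogeneous of degree $k-1$ it likewise gives $\xi^j\partial_j\partial_ih=(k-1)\partial_ih$, i.e. $\partial^2h(\xi,\cdot)=(k-1)\diff h$. Hence
\begin{equation*}
  g_U(\xi,\cdot)=-(k-1)\diff h,\qquad g_U(\xi,\xi)=-k(k-1)h<0,
\end{equation*}
the inequality because $k>1$ and $h>0$. In particular $\diff h$ vanishes nowhere on $U$, so $E=\ker\diff h$ has constant rank $n-1$ and $TU=E\oplus\langle\xi\rangle$, which is \eqref{eq:TUdecomp}; and since $g_U(\xi,X)=-(k-1)\diff h(X)$ vanishes exactly for $X\in E$, the $g_U$-orthogonal complement of $\xi$ is precisely $E$, so \eqref{eq:TUdecomp} is $g_U$-orthogonal.

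Next I would substitute into the definitions. Feeding $g_U(\xi,\cdot)=-(k-1)\diff h$ and $g_U(\xi,\xi)=-k(k-1)h$ into $\check g=g_U-g_U(\xi,\cdot)^2/g_U(\xi,\xi)$ gives $\check g=g_U+\tfrac{k-1}{kh}(\diff h)^2$, which is \eqref{eq:gu}; substituting \eqref{eq:gu} into $\prim g=\tfrac1hg_U+\tfrac1{h^2}(\diff h)^2$ and collecting the $(\diff h)^2$-term yields \eqref{eq:gprim}. From the defining formula one reads off $\check g(\xi,\cdot)=0$, so $\bR\xi\subseteq\ker\check g$; and from \eqref{eq:gprim} one computes $\prim g(\xi,X)=\tfrac1h\check g(\xi,X)+\tfrac1{kh^2}\diff h(\xi)\diff h(X)=0$ for $X\in E$ and $\prim g(\xi,\xi)=\tfrac1{kh^2}(kh)^2=k>0$, so \eqref{eq:TUdecomp} is $\prim g$-orthogonal as well.

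The only step that is not pure bookkeeping is showing that $g_U|_E$ is positive definite on all of $U$, and this is the expected main obstacle: the hypothesis gives positivity only along $\Hcal$. I would handle it by scaling: given $p\in U$, set $\lambda:=h(p)^{-1/k}>0$, so $\lambda p\in\Hcal$ and the dilation $S_\lambda\colon x\mapsto\lambda x$ maps $E_p$ isomorphically onto $T_{\lambda p}\Hcal=E_{\lambda p}$. Because the entries $\partial_i\partial_jh$ are homogeneous of degree $k-2$, one checks $S_\lambda^*g_U=\lambda^kg_U$ as $(0,2)$-tensors, so $(g_U)_p|_{E_p}=\lambda^{-k}\,S_\lambda^*\!\big((g_U)_{\lambda p}|_{T_{\lambda p}\Hcal}\big)=\lambda^{-k}\,S_\lambda^*(g_\Hcal)_{\lambda p}$ is a positive multiple of the pullback of a positive definite form by an isomorphism, hence positive definite. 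Granting this, $\check g$ vanishes on $\langle\xi\rangle$, equals $g_U$ on $E$, and is block-diagonal for $TU=E\oplus\langle\xi\rangle$, so $\check g\ge 0$ with kernel exactly $\bR\xi$. Finally, reading $g_U=\check g-\tfrac{k-1}{kh}(\diff h)^2$ in the same splitting: it is positive definite on $E$ and equals $-k(k-1)h<0$ on $\xi$, so it is Lorentzian; and $\prim g=\tfrac1h\check g+\tfrac1{kh^2}(\diff h)^2$ is positive definite on $E$ (a positive multiple of $g_U|_E$) and equals $k>0$ on $\xi$, so it is a Riemannian metric on $U$.
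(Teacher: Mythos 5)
Your proof is correct and follows essentially the same route as the paper: Euler's identity to compute $g_U(\xi,\cdot)$ and $g_U(\xi,\xi)$, substitution into the definitions of $\check g$ and $g'$, and block-diagonality with respect to $TU=E\oplus\langle\xi\rangle$. The only difference is that you make explicit the scaling argument (homogeneity of $\partial^2h$ of degree $k-2$) needed to propagate positive definiteness of $g_U|_E$ from $\Hcal$ to all of $U$, a step the paper's proof leaves implicit; this is a welcome addition rather than a deviation.
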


\begin{proof}
  By homogeneity of $h$, we have $\diff h(\xi) = kh$, $g_U(\xi,\cdot) = -(k-1)\diff h$ and $g_U(\xi,\xi) = -k(k-1)h$. This implies $\check g\vert_{E\times E} = g_U\vert_{E\times E}>0$ and, hence, $\ker \check g = \bR\xi$. Observing that
  $
  \frac{g_U(\xi,\cdot)^2}{g_U(\xi,\xi)} = -\frac{(k-1)}{kh}(dh)^2
  $
  we obtain the formulas for $g_U$ and $\prim g$. The distributions $E$ and $\bR\xi$ are obviously orthogonal with respect to $\check g$ and $(dh)^2$ and, therefore, also with respect to $g_U$ and $g'$ which are linear combinations (with functions as coefficients) of these two tensors.
  %
\end{proof}

\begin{Def}\label{def:gprimc}
  For $c\in\bR$ we define the bilinear symmetric form
  \begin{equation}
    \prim g_c:=-\partial^2\log(h+c) = \frac1{h+c}g_U + \frac1{(h+c)^2}(\diff h)^2
    \label{eq:defgprimc}
  \end{equation}
  on the set
  \begin{equation}
    \label{eq:Uc}
    U_c :=
    \begin{cases}
      \{x\in U \mid h(x) + c > 0\} & \text{for $c\leq0$,} \\
      \{x\in U \mid h(x) - c(k-1) > 0\} & \text{for $c>0$.}
    \end{cases}
  \end{equation}
\end{Def}

\begin{Prop}\label{prop:propertiesgprimc}
  \begin{enumerate}
  \item As in \Cref{prop:formulasprimg} we can write
    \label{item:formulagprimc}
    \begin{equation}
      \label{eq:formulagprimgc}
      \prim g_c = \frac1{h+c}\check g + \frac{h-c(k-1)}{kh}\frac1{(h+c)^2}(\diff h)^2.
    \end{equation}
  \item \label{item:gpriemannian} The metric $g'_c$ is Riemannian on $U_c$.
  \item \label{item:gpisometric} 
    If $cc' > 0$,
    then $(U_c,g'_c)$ is isometric to $(U_{c'},g'_{c'})$.
%
%
  \end{enumerate}
\end{Prop}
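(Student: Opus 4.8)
The plan is to treat the three parts in turn; the first two are short computations and the third exploits the homogeneity of $h$. For the formula \eqref{eq:formulagprimgc}, I would substitute the identity $g_U = \check g - \frac{k-1}{kh}(\diff h)^2$ from \Cref{prop:formulasprimg} into the definition $\prim g_c = \tfrac{1}{h+c}g_U + \tfrac{1}{(h+c)^2}(\diff h)^2$ given in \eqref{eq:defgprimc} and collect the coefficient of $(\diff h)^2$; it equals $\tfrac{1}{(h+c)^2} - \tfrac{k-1}{kh(h+c)} = \tfrac{kh - (k-1)(h+c)}{kh(h+c)^2} = \tfrac{h - c(k-1)}{kh(h+c)^2}$, which is precisely the asserted expression.

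To see that $\prim g_c$ is Riemannian on $U_c$, I would read off from \eqref{eq:formulagprimgc} and \Cref{prop:formulasprimg} the following: $\check g$ is positive definite on $E$ with kernel $\bR\xi$; the symmetric form $(\diff h)^2$ annihilates $E$ and satisfies $\diff h(\xi) = kh$; and $E \oplus \langle\xi\rangle$ is orthogonal for both $\check g$ and $(\diff h)^2$. Restricting $\prim g_c$ to $E$ gives $\tfrac{1}{h+c}g_U\vert_E$, while $\prim g_c(\xi,\xi) = \tfrac{kh\,(h - c(k-1))}{(h+c)^2}$. Hence $\prim g_c$ is positive definite at a point exactly when $h + c > 0$ and $h - c(k-1) > 0$ there, and it remains to verify these two inequalities on $U_c$: if $c \le 0$ then $U_c = \{h + c > 0\}$ and $h - c(k-1) \ge h > 0$ because $k > 1$; if $c > 0$ then $U_c = \{h - c(k-1) > 0\}$, which forces $h > c(k-1) > 0$ and therefore $h + c > 0$.

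For the isometry statement, suppose $cc' > 0$, put $\lambda := (c'/c)^{1/k} > 0$, and consider the homothety $\psi_\lambda : U \to U$, $x \mapsto \lambda x$, which preserves $U$ since $\bR^{>0}\cdot U \subset U$. Because $h$ is homogeneous of degree $k$ we have $\psi_\lambda^* h = \lambda^k h$, so $\psi_\lambda^*\log(h + c') = \log\lambda^k + \log(h + \lambda^{-k}c')$; as the Hessian $\partial^2$ commutes with the linear map $\psi_\lambda$ and annihilates constants, this yields $\psi_\lambda^*\prim g_{c'} = -\partial^2\log(h + \lambda^{-k}c') = \prim g_{\lambda^{-k}c'} = \prim g_c$, using $\lambda^{-k}c' = c$. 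Finally $\psi_\lambda$ carries $U_c$ onto $U_{c'}$: when $c < 0$, $h(x) + c > 0 \iff \lambda^k h(x) + c' > 0 \iff h(\psi_\lambda x) + c' > 0$ (using $\lambda^k c = c'$), and when $c > 0$ the same equivalence holds with $h - c(k-1) > 0$ replacing $h + c > 0$; the case $c = 0$ cannot occur since $cc' > 0$. Hence $\psi_\lambda$ is the required isometry.

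All of this is elementary substitution; the only steps that call for a little care are the two-case sign analysis of $U_c$ in the second part and the check that $\psi_\lambda$ maps $U_c$ \emph{onto} $U_{c'}$ (rather than merely into the larger domain $U$) in the third. I do not anticipate a genuine obstacle.
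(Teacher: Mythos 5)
Your proposal is correct and follows essentially the same route as the paper: substitute \eqref{eq:gu} into \eqref{eq:defgprimc} for part (1), read off positivity of the two coefficients on $U_c$ (with the same two-case sign check) for part (2), and use the homothety $x\mapsto\lambda x$ with $\lambda=(c'/c)^{1/k}$ together with the homogeneity of $h$ for part (3). Your extra care in splitting the positive-definiteness check along $E\oplus\langle\xi\rangle$ and in verifying that the homothety maps $U_c$ onto $U_{c'}$ only makes explicit what the paper's proof leaves implicit.
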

\begin{proof}
  \begin{enumerate}
  \item \Cref{eq:formulagprimgc} follows by inserting \eqref{eq:gu} into  \eqref{eq:defgprimc}.
    \item The positive definiteness of $g'_c$ follows directly from \cref{eq:formulagprimgc} since the coefficients of the two terms are positive.
    \item Scalar multiplication by $\lambda>0$ is a diffeomorphism on $U$. Let $\phi_\lambda:U_c\to U$ be the restriction.
      Using the homogeneity of $h$ it easily follows that $\phi_{\lambda}(U_c) = U_{\lambda^k c}$.

    Computing
    \begin{align}
      \phi_\lambda^*\prim g_c
      &= \phi_\lambda^*
        \left(
        \frac1{h+c}g_U + \frac1{(h+c)^2}(\diff h)^2
        \right) \\
      &= \frac1{\lambda^kh+c}\lambda^kg_U + \frac1{(\lambda^kh+c)^2}\lambda^{2k}(\diff h)^2 \\
      &= \frac1{h+\lambda^{-k}c}g_U + \frac1{(h+\lambda^{-k}c)^2}(\diff h)^2\\
      &= \prim g_{\lambda^{-k}c}
    \end{align}
    we see that for $\lambda=(\prim c/c)^{1/k}$ we have $\phi_\lambda^*(\prim g_{\prim c}) = \prim g_c$. Hence, $\phi_\lambda$ gives the required isometry.
    %
\qedhere
  \end{enumerate}
\end{proof}

\begin{Th}\label{prop:primgccomplete}
  Assume that $\prim g$ is a complete metric on $U$ and $c<0$. Then $\prim g_c$ is a complete metric on $U_c$.
\end{Th}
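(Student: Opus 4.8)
The plan is to reduce to the case $c=-1$ and then deduce completeness from a pointwise metric comparison together with an exhaustion-type function adapted to the boundary $\Hcal=\{h=1\}$. For the reduction: for any $c<0$ the isometry provided by \Cref{prop:propertiesgprimc} (applied with second parameter $-1$, so that the product of the two negative parameters is positive) identifies $(U_c,g'_c)$ with $(U_{-1},g'_{-1})$; since completeness is an isometry invariant, it suffices to treat $c=-1$. Here $U_{-1}=\{x\in U\mid h(x)>1\}$ is open in $U$ and its boundary inside $U$ is the hypersurface $\Hcal$.

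The comparison step is to show $g'_{-1}\ge g'$ pointwise on $U_{-1}$. Using the $g'$- and $g'_{-1}$-orthogonal decomposition $TU=E\oplus\langle\xi\rangle$ of \eqref{eq:TUdecomp} together with \Cref{prop:formulasprimg} and formula \eqref{eq:formulagprimgc}: on $E$ the two metrics are $\tfrac1h\check g\vert_E$ and $\tfrac1{h-1}\check g\vert_E$ with $\check g\vert_E>0$, and $0<h-1<h$ on $U_{-1}$ makes the $g'_{-1}$-coefficient larger; along $\xi$ one computes $g'(\xi,\xi)=k$ and $g'_{-1}(\xi,\xi)=\tfrac{kh(h+k-1)}{(h-1)^2}$, and $\tfrac{h(h+k-1)}{(h-1)^2}\ge 1$ is equivalent to $(k+1)h\ge 1$, which holds since $h>1$ and $k>1$. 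Consequently curve lengths can only increase from $g'$ to $g'_{-1}$, so any $g'_{-1}$-Cauchy sequence $(p_n)$ in $U_{-1}$ is also $g'$-Cauchy in $U$; as $(U,g')$ is complete by hypothesis, $p_n\to p$ for some $p\in U$, and in particular $h(p_n)\to h(p)\ge 1$.

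The crux — and the point where the sign assumption $c<0$ is used — is to exclude $h(p)=1$, i.e.\ to prevent the sequence from escaping to $\Hcal$. For this I use $\phi:=-\log(h-1)$ on $U_{-1}$, which tends to $+\infty$ as $h\to 1^+$. Using again that $dh$ annihilates $E$ and that $E\perp_{g'_{-1}}\langle\xi\rangle$,
\[
  \|\grad_{g'_{-1}}\phi\|^2_{g'_{-1}}=\frac{1}{(h-1)^2}\,\frac{(dh(\xi))^2}{g'_{-1}(\xi,\xi)}=\frac{kh}{h+k-1}<k\qquad\text{on }U_{-1},
\]
so $\phi$ is $\sqrt{k}$-Lipschitz for the distance $d_{g'_{-1}}$. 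Hence $|\phi(p_n)-\phi(p_m)|\le\sqrt{k}\,d_{g'_{-1}}(p_n,p_m)\to 0$, so $(\phi(p_n))$ is a bounded sequence of reals; if $h(p)=1$ this contradicts $\phi(p_n)=-\log(h(p_n)-1)\to+\infty$. Therefore $h(p)>1$ and $p\in U_{-1}$.

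To finish, since $p$ lies in the open set $U_{-1}$ the two smooth metrics $g'$ and $g'_{-1}$ are uniformly equivalent on a compact neighbourhood of $p$ in $U_{-1}$, so $p_n\to p$ also with respect to $g'_{-1}$. Thus $(U_{-1},g'_{-1})$ is metrically complete, hence geodesically complete by Hopf--Rinow, and by the reduction $(U_c,g'_c)$ is complete for every $c<0$. The only genuinely delicate ingredient is the gradient estimate for $\phi$; everything else is bookkeeping with the formulas of \Cref{prop:formulasprimg} and \Cref{prop:propertiesgprimc}.
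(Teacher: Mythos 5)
Your proof is correct and rests on the same two estimates as the paper's own argument: the pointwise comparison $g'_c \geq g'$ and the bound $g'_c \geq \frac1k(\diff\log(h+c))^2$ (your gradient estimate for $\phi=-\log(h-1)$ is exactly this inequality restated, since $\|\grad\phi\|^2\le k$ is equivalent to $(\diff\phi)^2\le k\,g'_{-1}$ as quadratic forms). The only cosmetic differences are the harmless reduction to $c=-1$ via \Cref{prop:propertiesgprimc} and the use of the Cauchy-sequence criterion for completeness in place of the paper's criterion that every curve leaving all compact subsets of $U_c$ has infinite length.
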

\begin{rem}
  The metric $\prim g$ on $U$ is complete if and only if $g_\Hcal$ is complete, since $(U,\prim g)$ is isometric to $(\bR\times\Hcal, \diff r^2 + g_\Hcal)$.
\end{rem}
\begin{proof}
  Denote by $L(\gamma)$ and $\prim L_c(\gamma)$ the Riemannian length of a curve $\gamma$ in $U_c$ with respect to $\prim g$ and $\prim g_c$, respectively.
  Note first that
  \begin{equation}
    \label{eq:gprimcestimate}
    \begin{split}
      \prim g_c - \prim g &= \left(\frac1{h+c}-\frac1h\right)\check
      g + \frac1{k} \bigg( \underbrace{\frac{h-c(k-1)}{h}}_{>1}
      \frac1{(h+c)^2} - \frac1{h^2} \bigg)
      (\diff h)^2 \\
      &\geq \frac1{k} \left( \frac1{(h+c)^2}-\frac1{h^2} \right)
      (\diff h)^2 \geq 0
    \end{split}
  \end{equation}
  on $\prim U$. Hence, $\prim L_c(\gamma)\geq L(\gamma)$ for any curve $\gamma$ in $U_c$.

  Now, for some $T>0$ let $\gamma:[0,T)\to U_c$ be a curve that is not contained in any compact set in $U_c$. If $\gamma$ already has infinite length with respect to $\prim g$ then it also has infinite length with respect to $\prim g_c$ by \cref{eq:gprimcestimate} and we are done.

  Assume that $L(\gamma)<\infty$. Since $\prim g$ is complete, there exists a compact set $K\subset U$ such that $\gamma\subset K$. 
  Then $\{\gamma(t)\}$ has a limit point $p\in U$ that is not in $U_c$ because otherwise $\overline{\{\gamma(t)\}}\subset U_c$ is a compact subset of $U_c$ containing $\gamma$ which is a contradiction. By continuity of $h$, this limit point lies in $\{h+c=0\}$. Hence, we can find a sequence $t_i\in [0,T)$, $t_i\to T$, such that $h(\gamma(t_i)) \to -c$.

  Using the estimate
  \begin{align}
    \prim g_c
    &= \frac1{h+c}\check g+\frac{h-c(k-1)}{kh}(\diff\log(h+c))^2 \\
    &\geq \frac1{k}(\diff\log(h+c))^2
  \end{align}
  we find
  \begin{align}
    \prim L_c(\gamma)
    &\geq
      \frac1{\sqrt k}\int_0^{t_i}
      \left|
      \frac{\partial}{\partial t}\log(h(\gamma(t))+c)
      \right|\diff t \\
    &\geq
      \frac1{\sqrt k}
      \left|
      \log(h(\gamma(t_i))+c) - \log(h(\gamma(0))+c)
      \right|
      \stackrel{t_i\to T}{\longrightarrow} \infty
  \end{align}
  Hence, any curve that is not contained in any compact set in $U_c$ has infinite length with respect to $\prim g_c$. This is equivalent to the completeness of $\prim g_c$.
\end{proof}

\begin{rem}
  In the case of $c>0$ the metric $\prim g_c$ is not complete. One can find a curve with limit point in $\{h-c(k-1)=0\}$ that has finite length.
\end{rem}

The following lemma will be used in the proof of \Cref{thm:rmap} in the next section.
\begin{Lem}\label{lem:tangentmetriccomplete}
  Let $(M_1^n,g_1)$ be a complete Riemannian manifold. Then the metric
  \begin{equation}
    \label{eq:tangentmetric}
    g:= 
    \begin{pmatrix}
      g_1 & 0 \\
      0 & g_1 \\
    \end{pmatrix}
  \end{equation}
  defined on the product $M=M_1\times\bR^n$ is complete.
\end{Lem}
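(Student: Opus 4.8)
The plan is to deduce completeness of $(M,g)$ from that of $(M_1,g_1)$ using the criterion (already applied in the proof of \Cref{prop:primgccomplete}) that a Riemannian manifold is complete exactly when every curve $\gamma\colon[0,T)\to M$ whose image lies in no compact subset has infinite length. First I would observe that $g$ really is a Riemannian metric: it is block diagonal with both diagonal blocks equal to the positive-definite form $g_1$ --- the second block acting on the fiber $\bR^n\cong T_pM_1$, an identification which in the situation of \Cref{thm:rmap}, where $M_1\subset\bR^n$ is a domain, is simply the identity --- so $g$ is positive definite. Writing a curve as $\gamma=(\alpha,\beta)$, with $\alpha$ its $M_1$-component and $\beta$ its $\bR^n$-component, the block form of $g$ gives, pointwise,
\[
 g(\dot\gamma,\dot\gamma)=g_1(\dot\alpha,\dot\alpha)+(g_1)_{\alpha}(\dot\beta,\dot\beta)\ \ge\ g_1(\dot\alpha,\dot\alpha),
\]
where the second term is the fiber block of $g$ --- the inner product $(g_1)_{\alpha}$ applied to the fiber vector $\dot\beta$ --- and is non-negative by positive-definiteness; integrating, $L_g(\gamma)\ge L_{g_1}(\alpha)$, so the projection $(M,g)\to(M_1,g_1)$ is length non-increasing.

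I would then distinguish two cases according to the behavior of $\alpha$. If the image of $\alpha$ is contained in no compact subset of $M_1$, then completeness of $(M_1,g_1)$ together with the above criterion forces $L_{g_1}(\alpha)=\infty$, whence $L_g(\gamma)=\infty$. Otherwise $\alpha([0,T))$ lies in a compact set $K\subset M_1$; since $K\times\{|v|\le R\}$ is compact for every $R>0$ while $\gamma$ lies in no compact subset of $M$, the curve $\beta$ must be unbounded, so we may pick an increasing sequence $t_i$ with $|\beta(t_i)|\to\infty$. By compactness of $K$ and smoothness of $g_1$ there is a constant $c_K>0$ with $(g_1)_p(w,w)\ge c_K|w|^2$ for all $p\in K$ and all fiber vectors $w$ --- equivalently, the flat fiber metrics over $K$ are uniformly bounded below by a fixed Euclidean metric. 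Therefore
\[
 L_g(\gamma|_{[0,t_i]})\ \ge\ \sqrt{c_K}\int_0^{t_i}|\dot\beta|\,\diff t\ \ge\ \sqrt{c_K}\,\bigl(|\beta(t_i)|-|\beta(0)|\bigr)\ \xrightarrow{i\to\infty}\ \infty,
\]
so $L_g(\gamma)=\infty$ in this case as well, and completeness of $(M,g)$ follows.

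I do not anticipate a real obstacle: this is the twisted-product analogue of the elementary fact that a Riemannian product of complete manifolds is complete. The one step deserving care is the fiber estimate in the second case. Although the fiber inner products $(g_1)_p$ depend on the base point $p$, they are uniformly non-degenerate over the compact set $K\subset M_1$ through which $\alpha$ runs, so that an unbounded excursion of the $\bR^n$-component $\beta$ genuinely accumulates infinite length; converting the continuity of $p\mapsto(g_1)_p$ on $K$ into the uniform lower bound $(g_1)_p\ge c_K\,g_{\mathrm{eucl}}$, and using that flat $\bR^n$ is complete, is exactly what makes the argument close.
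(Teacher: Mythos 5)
Your proof is correct. There is no in-paper argument to compare against: the paper disposes of this lemma in one line by declaring it a special case of Theorem 2 of \cite{cortes2012completeness}, so your self-contained argument supplies exactly what that citation leaves implicit. The mechanism you use --- the divergent-curve criterion for completeness (the same one the paper employs in the proof of \Cref{prop:primgccomplete}), the pointwise splitting $g(\dot\gamma,\dot\gamma)=g_1(\dot\alpha,\dot\alpha)+(g_1)_\alpha(\dot\beta,\dot\beta)$, and the dichotomy according to whether the base component $\alpha$ escapes every compact subset of $M_1$ (so that completeness of $g_1$ and $L_g(\gamma)\ge L_{g_1}(\alpha)$ finish the job) or remains in a compact set $K$ (so that $\beta$ must be unbounded and the uniform bound $(g_1)_p\ge c_K\,|\cdot|^2$ for $p\in K$ forces infinite length) --- is the standard argument underlying such ``block-diagonal over a complete base'' completeness statements, and every step closes: the constant $c_K$ exists because $(p,w)\mapsto (g_1)_p(w,w)$ is continuous and positive on the compact set $K\times S^{n-1}$, and $\int_0^{t_i}|\dot\beta|\,\diff t\ge |\beta(t_i)|-|\beta(0)|$ is elementary. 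The one point genuinely deserving comment, which you already address, is that the matrix notation in \eqref{eq:tangentmetric} presupposes an identification of the $\bR^n$ fibre with $T_pM_1$ varying smoothly in $p$; in the intended application (\Cref{thm:rmap}) $M_1=U_c$ is a domain in $\bR^n$ and the identification is the tautological one, so your reading is the right one and your uniform-positivity estimate over $K$ is precisely the step that makes the fibre contribution rigorous.
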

\begin{proof}
  This is a special case of \cite{cortes2012completeness}*{Theorem 2}.
\end{proof}

\section{Application to the r-map}
\label{sec:r-map}
Let us first recall the definition of the supergravity r-map, following \cite{cortes2012completeness}.

Let $(\Hcal, g_\Hcal)$ be a projective special real manifold defined by a homogeneous cubic polynomial $h$ such that $\Hcal \subset \{h = 1\}$. Set $U := \bR^{>0}\cdot\Hcal$ and define $g_U := -\partial^2 h$.

Define $\widebar M = \bR^n + iU\subset\bC^n$ with coordinates $(z^i = y^i + \sqrt{-1}x^i)_{i=1,\ldots,n}\in \bR^n+iU$. We endow $\widebar M$ with a K\"ahler metric $\widebar g$ defined by the K\"ahler potential $K(z) = -\log h(x)$. As a matrix, this metric is given by
$
\widebar g
=
\frac14
\begin{pmatrix}
  -\partial^2\log h(x) & 0 \\
  0 & -\partial^2\log h(x)
\end{pmatrix}.
$
Take note that $\widebar g$ is positive definite and is the quotient metric of the conical affine special K\"ahler manifold $\bC^*\times\widebar M$ defined by the prepotential $\hat F(Z^0,\ldots,Z^n) = -h(Z^1,\ldots,Z^n)/Z^0$, where $Z^0$ is the coordinate in the $\bC^*$-factor and $Z^i:=Z^0z^i$ for $i=1,\ldots,n$.
\begin{Def}
  The correspondence $(\Hcal, g_\Hcal) \mapsto (\widebar M, \widebar g)$ is called the \emph{supergravity r-map}.
\end{Def}

Related to the projective special real manifold $(\Hcal, g_\Hcal)$ is the so-called conical affine special real manifold $(U, g_U)$. The rigid r-map assigns it to the affine special K\"ahler manifold $(M:=\widebar M, g)$ with metric $g$ induced by the holomorphic prepotential $F(z) = -h(z)$. As a matrix with respect to the real coordinates $(y^i,x^i)$, this metric is given by
$
g
=
\begin{pmatrix}
  -\partial^2 h(x) & 0 \\
  0 & -\partial^2 h(x)
\end{pmatrix}.
$

Let $U_c$ be defined as in \cref{eq:Uc} and set $M_c = \bR^n + iU_c\subset M$. Note that $M_0 = M$.

\begin{Th}\label{thm:rmap}
  Applying the ASK/PSK-correspondence to the special K\"ahler pair
  \begin{equation}\label{eq:Fkorrektur}
    (\phi_c, F_c) := (\diff F, F-2\sqrt{-1}c)
  \end{equation}
  defined on $M_c$ with $F(z) = -h(z)$ and $c\in\bR$ gives a projective special K\"ahler manifold $(\widebar M_c, \widebar g_c)$. If $c=0$ we recover the supergravity r-map metric $\widebar g = \widebar g_0$. For any pair $c, c' \in \bR$ such that
  $cc'>0$
  the obtained manifolds $(\widebar M_c, \widebar g_c)$ and $(\widebar M_{c'}, \widebar g_{c'})$ are isometric.
  Moreover, if $c<0$ and $(\Hcal, g_\Hcal)$ is complete, then $(\widebar M_c, \widebar g_c)$ is complete.
\end{Th}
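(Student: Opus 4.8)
The plan is to make the abstract ASK/PSK-correspondence (\Cref{thm:conditionsglobalconification} and \Cref{def:askpsk}) completely explicit for the pair \eqref{eq:Fkorrektur}, and then to read off the three assertions from the results of \Cref{sec:elemdefs}. The first component $\phi_c=\diff F$ of \eqref{eq:Fkorrektur} does not depend on $c$, and $F_c=-h-2\sqrt{-1}c$ differs from the globally defined prepotential $F=-h$ only by an additive constant, so $(\phi_c,F_c)$ is a \emph{global} special K\"ahler pair on the domain $M_c\subset\bC^n$. Hence the bundle of special K\"ahler pairs over $M_c$ has a global horizontal section, $\Hol(\theta)=\{e\}\subset G$, and \Cref{thm:conditionsglobalconification} will be applicable as soon as non-degeneracy is verified. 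By \eqref{eq:conifiedprepotential} and the homogeneity of $h$, the conification of $(\phi_c,F_c)$ has the homogeneous degree-two prepotential $\hat F(Z^0,\dots,Z^n)=-h(Z^1,\dots,Z^n)/Z^0-2\sqrt{-1}c\,(Z^0)^2$, which is precisely the $\alpha'$-corrected prepotential of the introduction.

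The crucial step is the evaluation of the K\"ahler potential $\hat K_c(1,z)=K+\Im f_c$, where $f_c=2F_c-\sum_i z^i\partial_iF_c$ and $K=\sum_i\Im(\widebar z^i\partial_iF_c)$. Writing $z=y+\sqrt{-1}x$ and expanding the cubic along the complex line as $h(y+tx)=a_0+a_1t+a_2t^2+a_3t^3$ with real coefficients $a_j$ and $a_3=h(x)$, one computes $h(z)$ and, by Euler's identity $\sum_i z^i\partial_ih(z)=3h(z)$, also $\sum_i x^i\partial_ih(z)$ by setting $t=\sqrt{-1}$; taking real and imaginary parts and collecting terms yields
\[
  \hat K_c(1,z)=K+\Im f_c=-4\bigl(h(x)+c\bigr).
\]
I expect this identity to be the main obstacle: it is elementary, but demands careful bookkeeping of the cubic and of the factors of $\sqrt{-1}$, and it is exactly the numerical factor $-4$ (which forces the $-2$ in \eqref{eq:Fkorrektur}) that makes the rest of the theorem come out cleanly. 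Granting it, \Cref{prop:nondegiffpsk} shows that $(\phi_c,F_c)$ is non-degenerate at every point of $M_c$: indeed $\Im f_c+K=-4(h(x)+c)\neq0$ on $M_c$ by the definition of $U_c$ in \eqref{eq:Uc}, and the form $\widebar\omega_c=\tfrac i2\partial\widebar\partial\log|h(x)+c|$ is non-degenerate because, under the substitution $x=\Im z$, the Hessian $-\partial^2\log(h(x)+c)$ equals $\prim g_c$, which is a Riemannian (hence invertible) metric by \Cref{prop:propertiesgprimc}. Therefore $\dom(u)=M_c$ for a germ $u$ of $(\phi_c,F_c)$ (using that $M_c=\bR^n+\sqrt{-1}U_c$ is connected), and \Cref{thm:conditionsglobalconification} together with the construction following it (see \Cref{def:askpsk}) yields the projective special K\"ahler manifold $(\widebar M_c,\widebar g_c)$, realized on $M_c$ with K\"ahler potential $-\log|\hat K_c(1,\cdot)|=-\log|h(x)+c|$ up to an additive constant. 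As this potential depends only on $x=\Im z$, in the coordinates $(y^i,x^i)$ the metric $\widebar g_c$ is block-diagonal with both blocks equal to $\tfrac14\,\prim g_c$; for $c=0$ we have $\prim g_0=-\partial^2\log h$, so $\widebar g_0$ is the supergravity r-map metric $\widebar g$, which proves the second assertion.

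For the isometry claim when $cc'>0$, I would use the biholomorphism $z\mapsto\lambda z$ of $\bC^n$ with $\lambda=(c'/c)^{1/3}>0$: by homogeneity of $h$ it maps $\widebar M_c=\bR^n+\sqrt{-1}U_c$ onto $\bR^n+\sqrt{-1}U_{\lambda^3c}=\widebar M_{c'}$, and it pulls the K\"ahler potential $-\log|h(\Im z)+c'|$ back to $-\log|\lambda^3h(x)+c'|=-\log\lambda^3-\log|h(x)+c|$, which differs from a K\"ahler potential of $\widebar g_c$ only by a constant; hence it is an isometry $(\widebar M_c,\widebar g_c)\to(\widebar M_{c'},\widebar g_{c'})$. (Equivalently, this is \Cref{prop:propertiesgprimc} applied separately in the $x$- and $y$-directions.)

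Finally, assume $c<0$ and that $(\Hcal,g_\Hcal)$ is complete. Then $\prim g$ is complete on $U$, since $(U,\prim g)$ is isometric to the Riemannian product $(\bR\times\Hcal,\diff r^2+g_\Hcal)$; hence $\prim g_c$, and with it $\tfrac14\prim g_c$, is complete on $U_c$ by \Cref{prop:primgccomplete}. Applying \Cref{lem:tangentmetriccomplete} with $(M_1,g_1)=(U_c,\tfrac14\prim g_c)$ then shows that the metric on $U_c\times\bR^n$ with both diagonal blocks equal to $\tfrac14\prim g_c$, that is $\widebar g_c$ on $\widebar M_c$, is complete, which proves the last assertion.
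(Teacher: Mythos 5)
Your proposal is correct and follows essentially the same route as the paper: verifying non-degeneracy via \Cref{prop:nondegiffpsk} through the identity $\Im f_c+K=-4(h(\Im z)+c)$, identifying $\widebar g_c$ as the block-diagonal metric $\tfrac14 g'_c$, and then invoking \Cref{prop:propertiesgprimc}, \Cref{prop:primgccomplete} and \Cref{lem:tangentmetriccomplete}. The only (welcome) additions are your explicit remark that the global prepotential forces trivial holonomy of the bundle of special K\"ahler pairs, and your derivation of the key identity by expanding $h(y+tx)$ in $t$, which the paper simply quotes.
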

\begin{proof}
  We will use \Cref{prop:nondegiffpsk} to show that $(\diff F, F-2\sqrt{-1}c)$ is a non-degenerate special K\"ahler pair on $M_c$.
  Set $f(z) = 2(F-2\sqrt{-1}c) - \sum_{i=1}^nz^i\frac{\partial F}{\partial z^i} = h(z) - 4\sqrt{-1}c$ and $K(z) = \sum_{i=1}^n\Im\left(\widebar z^i\frac{\partial F}{\partial z^i}\right)$.
  Using the identity
  \begin{equation}
    \Im h(z) = \sum_{i=1}^n \Im\left(\widebar z^i\frac{\partial h}{\partial z^i}\right) - 4 h(\Im z),
  \end{equation}
  we compute $\Im f(z) + K(z) = -4(h(\Im z) + c)$, which is nonzero on $M_c$. The function $K' := -\log|\Im f + K| = -\log(4|h(\Im z) + c|)$ defines a symmetric bilinear tensorfield
  $
  \widebar g_c = \sum_{i,j=1}^n \frac{\partial^2 K'}{\partial z^i\widebar\partial z^j}\diff z^i\diff\widebar z^j
  $
  which, as a matrix, is of the form
  \begin{equation}
    \label{eq:deformedlocalrmapmetric}
    \widebar g_c = \frac14
    \begin{pmatrix}
      -\partial^2\log(h(x)+c) & 0 \\
      0 & -\partial^2\log(h(x)+c)
    \end{pmatrix}
    =
    \frac14
    \begin{pmatrix}
      g'_c(x) & 0 \\
      0 & g'_c(x)
    \end{pmatrix}
  \end{equation}
  where $\partial^2$ is the real Hessian operator with respect to the real coordinates $x$ and $g'_c$ is the deformed metric of the previous section. Hence, we see that $\widebar g_c$ is positive definite by  \Cref{prop:propertiesgprimc}. This proves that $(\diff F, F - 2\sqrt{-1}c)$ is a non-degenerate special K\"ahler pair on $M_c$. In particular, $\widebar g_c$ is the projective special K\"ahler metric that is obtained via \cref{eq:pskmetric} from the conical affine special K\"ahler metric $\hat g$ on the cone $\bC^*\times M_c$ with structure induced by $\con\left( \diff F, F-2\sqrt{-1}c \right)$. The supergravity r-map metric is recovered for $c=0$. If $g_\Hcal$ is complete and $c<0$, then $\widebar g_c$ is complete by \Cref{prop:primgccomplete} and \Cref{lem:tangentmetriccomplete}. It was proven in \Cref{prop:propertiesgprimc}.\ref{item:gpisometric} that scalar multiplication on $U$ by $\lambda>0$ induces a family of isometries $\phi_\lambda:(U_c, g'_c)\to (U_{\lambda^3c},g'_{\lambda^3c})$. The differential defines a corresponding family of isometries $\diff\phi_\lambda: (\widebar M_c = TU_c,\widebar g_c) \to (\widebar M_{\lambda^3c} = TU_{\lambda^3c}, \widebar g_c)$.
\end{proof}
\begin{rem}
  \label{rem:eldef}
  The above proof shows that the family of K\"ahler manifolds $(\widebar M_c, \widebar g_c)$ with $\widebar g_c$ given by \cref{eq:deformedlocalrmapmetric} is still defined when the projective special real manifold is replaced by a general hyperbolic centroaffine hypersurface associated with a homogeneous function $h$. The statements about completeness and isometries relating members of the family $(\widebar M_c, \widebar g_c)$ remain true under the assumption that the centroaffine hypersurface is complete.
  However, the metrics $\widebar g_c$ are in general no longer projective special K\"ahler. In fact, the ASK/PSK-correspondence can not be applied, as the K\"ahler metric $g$ obtained by the generalized r-map is in general no longer affine special K\"ahler.
  However, it turns out that the metrics $g$ and $\widebar g_c$ are related by an elementary deformation, as defined in \cite{swann2014elementary}*{Definition 1}, with the symmetry replaced by the vector field $X=\grad K_c$ for the K\"ahler potential $K_c=-4(h(\Im z)+c)$ and $g_\alpha:= g(X,\cdot)^2 + g(JX,\cdot)^2$. Indeed, the metric $\widebar g_c$ is of the form
  \begin{align}
    \label{eq:elementarydef}
    \widebar g_c
    &= f_1g + f_2g_\alpha \\
    &= \frac 1{K_c}\,g + \frac 1{4K_c^2}\left( (dK_c)^2 + (dK_c\circ J)^2 \right),
  \end{align}
  for $f_1=\frac1{K_c}$ and $f_2=\frac1{4K_c^2}$.
\end{rem}

\begin{Ex}\label{ex:ex1}
  Consider the complete projective special real manifold
  \begin{equation}
    \Hcal = \{(x,y,z)\in\bR^3 \mid x(xy - z^2) = 1, x > 0\}
  \end{equation}
  and set $U = \bR^{>0}\cdot\Hcal$. Computing the scalar curvature of the metric $g'_c := -\partial^2 \log(h+c)$ for $h = x(xy-z^2)$ and $c\in\bR$, for example with Mathematica \cite{mathematica} using the RGTC package \cite{rgtc}, gives
  \begin{equation}
    \scal_{g'_c} = -
    \frac
    {3(h^2  - 11ch + 6c^2)}
    {4(h-2c)^2}.
  \end{equation}
  For $c=0$ we find that $\scal_{g'_c} = -\frac34$ is constant. For $c\neq0$ we can further substitute $u := h/c$ and find
  \begin{equation}
    \scal_{g'_c} = -
    \frac
    {3(u^2 - 11u + 6)}
    {4(u -2)^2}
  \end{equation}
  which is constant only on the level sets of $h$. This shows that the deformed metrics are in general not isometric to the undeformed metric. Since the manifold $(U_c,g'_c)$ is contained in $(\widebar M_c, \widebar g_c)$ as a totally geodesic submanifold, this shows that the deformed metrics are in general not isometric to the undeformed metric.
\end{Ex}

\begin{Ex}\label{ex:ex2}
  Consider the complete projective special real manifold
  \begin{equation}
    \Hcal = \{(x,y,z)\in\bR^3 \mid xyz = 1, x > 0, y>0\}
  \end{equation}
  and set $U = \bR^{>0}\cdot\Hcal$. Computing the scalar curvature of the metric $g'_c := -\partial^2 \log(h+c)$ for $h = xyz$ and $c\in\bR$, gives
  \begin{equation}
    \scal_{g'_c} =
    \frac
    {3c(4h^2 -3ch + 2c^2) }
    {2h(h-2c)^2}.
  \end{equation}
  For $c=0$ we find that $\scal_{g'_c} = 0$ is constant. For $c\neq0$ we can substitute $u := h/c$ and find
  \begin{equation}
    \scal_{g'_c} =
    \frac{3(4u^2 - u + 2)}
    {2u(u-2)^2}
  \end{equation}
  which is constant only on the level sets of $h$.
\end{Ex}

\section{Acknowledgements}
This work was partly supported by the German Science Foundation (DFG) under the Research Training Group 1670 
and the Collaborative Research Center (SFB) 676. The work of T.M. was partly supported by the STFC consolidated grant ST/G00062X/1. He
thanks the Department of Mathematics and the Centre for Mathematical Physics
of the University of Hamburg for support and hospitality during various stages of this work.


\begin{bibdiv}
\begin{biblist}

\bib{acd02}{article}{
      author={Alekseevsky, DV},
      author={Cort{\'e}s, Vincente},
      author={Devchand, Chandrashekar},
       title={Special complex manifolds},
        date={2002},
     journal={J. Geom. Phys.},
      volume={42},
      number={1},
       pages={85\ndash 105},
}

\bib{acdm13}{article}{
      author={Alekseevsky, Dmitri~V.},
      author={Cort{\'e}s, Vicente},
      author={Dyckmanns, Malte},
      author={Mohaupt, Thomas},
       title={Quaternionic {K{\"a}hler} metrics associated with special
  {K{\"a}hler} manifolds},
        date={2015},
     journal={J. Geom. Phys.},
      volume={92},
       pages={271\ndash 287},
}

\bib{acm12}{article}{
      author={Alekseevsky, Dmitri~V},
      author={Cort{\'e}s, Vicente},
      author={Mohaupt, Thomas},
       title={{Conification of K{\"a}hler and hyper-K{\"a}hler manifolds}},
        date={2013},
     journal={Commun. Math. Phys.},
      volume={324},
      number={2},
       pages={637\ndash 655},
}

\bib{Antoniadis:1995ct}{article}{
      author={Antoniadis, Ignatios},
      author={Ferrara, Sergio},
      author={Gava, E.},
      author={Narain, K.S.},
      author={Taylor, T.R.},
       title={Perturbative prepotential and monodromies in {N=2} heterotic
  superstring},
        date={1995},
     journal={Nucl. Phys. B},
      volume={447},
      number={1},
       pages={35\ndash 61},
}

\bib{Antoniadis:1995vz}{article}{
      author={Antoniadis, Ignatios},
      author={Ferrara, Sergio},
      author={Taylor, TR},
       title={{N=2} heterotic superstring and its dual theory in five
  dimensions},
        date={1996},
     journal={Nucl. Phys. B},
      volume={460},
      number={3},
       pages={489\ndash 505},
}

\bib{Aspinwall:1996mn}{article}{
      author={Aspinwall, Paul~S.},
       title={K3 surfaces and string duality},
        date={1996},
     journal={Preprint, {\tt arXiv:hep-th/9611137}},
}

\bib{Behrndt:1996jn}{article}{
      author={Behrndt, Klaus},
      author={Cardoso, Gabriel~Lopes},
      author={de~Wit, Bernard},
      author={Kallosh, Renata},
      author={L\"ust, Dieter},
      author={Mohaupt, Thomas},
       title={Classical and quantum {N=2} supersymmetric black holes},
        date={1997},
     journal={Nucl. Phys. B},
      volume={488},
      number={1-2},
       pages={236\ndash 260},
}

\bib{rgtc}{inproceedings}{
      author={Bonanos, S.},
       title={Capabilities of the {Mathematica} package ``{Riemannian Geometry
  and Tensor Calculus}''},
organization={World Scientific Publishing Company Incorporated},
        date={2003},
   booktitle={{Proceedings of the 10th Hellenic Relativity Conference on Recent
  Developments in Gravity: Kalithea/Chalkidiki, Greece, 30 May-2 June 2002}},
       pages={174},
}

\bib{Cadavid:1995bk}{article}{
      author={Cadavid, AC},
      author={Ceresole, Anna},
      author={D'Auria, Riccardo},
      author={Ferrara, Sergio},
       title={11-dimensional supergravity compactified on {Calabi-Yau}
  threefolds},
        date={1995},
     journal={Phys. Lett. B},
      volume={357},
      number={1-2},
       pages={76\ndash 80},
}

\bib{Ceresole:1995jg}{article}{
      author={Ceresole, Anna},
      author={D'Auria, Riccardo},
      author={Ferrara, Sergio},
      author={Van~Proeyen, A.},
       title={Duality transformations in supersymmetric yang-mills theories
  coupled to supergravity},
        date={1995},
     journal={Nucl. Phys. B},
      volume={444},
      number={1-2},
       pages={92\ndash 124},
}

\bib{cds16}{article}{
      author={Cort\'es, Vicente},
      author={Dyckmanns, Malte},
      author={Suhr, Stefan},
       title={Completeness of projective special {K\"ahler} and quaternionic
  {K\"ahler} manifolds},
        date={2016},
     journal={to appear (accepted 26-12-2016) in ``New perspectives in
  differential geometry: special metrics and quaternionic geometry'' in honour
  of Simon Salamon (Rome, 16-20 November 2015), {\tt arXiv:1607.07232
  [math.DG]}},
}

\bib{cortes2012completeness}{article}{
      author={Cort\'es, V.},
      author={Han, X.},
      author={Mohaupt, T.},
       title={Completeness in {Supergravity} {Constructions}},
        date={2012},
        ISSN={0010-3616, 1432-0916},
     journal={Commun. Math. Phys.},
      volume={311},
      number={1},
       pages={191\ndash 213},
         url={http://link.springer.com/article/10.1007/s00220-012-1443-x},
}

\bib{cns16}{article}{
      author={Cort\'es, V.},
      author={Nardmann, M.},
      author={Suhr, S.},
       title={Completeness of hyperbolic centroaffine hypersurfaces},
        date={2016},
        ISSN={1019-8385},
     journal={Comm. Anal. Geom.},
      volume={24},
      number={1},
       pages={59\ndash 92},
         url={http://dx.doi.org/10.4310/CAG.2016.v24.n1.a3},
}

\bib{Candelas:1990rm}{article}{
      author={Candelas, Philip},
      author={Xenia, C.},
      author={Green, Paul~S.},
      author={Parkes, Linda},
       title={A pair of {Calabi-Yau} manifolds as an exactly soluble
  superconformal theory},
        date={1991},
     journal={Nucl. Phys. B},
      volume={359},
      number={1},
       pages={21\ndash 74},
}

\bib{dewit}{article}{
      author={de~Wit, Bernard},
       title={{N=2} electric-magnetic duality in a chiral background},
        date={1996},
     journal={Nucl. Phys. B},
      volume={49},
      number={1-3},
       pages={191\ndash 200},
}

\bib{deWit:1995zg}{article}{
      author={de~Wit, Bernard},
      author={Kaplunovsky, Vadim},
      author={Louis, Jan},
      author={L{\"u}st, Dieter},
       title={Perturbative couplings of vector multiplets in {N=2} heterotic
  string vacua},
        date={1995},
     journal={Nucl. Phys. B},
      volume={451},
      number={1-2},
       pages={53\ndash 95},
}

\bib{dewit92}{article}{
      author={de~Wit, Bernard},
      author={Van~Proeyen, Antoine},
       title={Special geometry, cubic polynomials and homogeneous quaternionic
  spaces},
        date={1992},
     journal={Comm. Math. Phys.},
      volume={149},
      number={2},
       pages={307\ndash 333},
}

\bib{ferrara90}{article}{
      author={Ferrara, Sergio},
      author={Sabharwal, S.},
       title={Quaternionic manifolds for type {II} superstring vacua of
  {Calabi-Yau} spaces},
        date={1990},
     journal={Nucl. Phys. B},
      volume={332},
      number={2},
       pages={317\ndash 332},
}

\bib{Grisaru:1986kw}{article}{
      author={Grisaru, Marcus~T.},
      author={Van~de Ven, A.E.M.},
      author={Zanon, D.},
       title={Four-loop divergences for the {N=1} supersymmetric non-linear
  sigma-model in two dimensions},
        date={1986},
     journal={Nucl. Phys. B},
      volume={277},
       pages={409\ndash 428},
}

\bib{haydys2008hyperkahler}{article}{
      author={Haydys, Andriy},
       title={{HyperK\"ahler and quaternionic K\"ahler manifolds with
  $S^1$-symmetries}},
        date={2008},
     journal={J. Geom. Phys.},
      volume={58},
      number={3},
       pages={293\ndash 306},
}

\bib{Hosono:1993qy}{article}{
      author={Hosono, Shinobu},
      author={Klemm, Albrecht},
      author={Theisen, S.},
      author={Yau, Shing-Tung},
       title={Mirror symmetry, mirror map and applications to {Calabi-Yau}
  hypersurfaces},
        date={1995},
     journal={Commun. Math. Phys.},
      volume={167},
      number={2},
       pages={301\ndash 350},
}

\bib{Harvey:1995fq}{article}{
      author={Harvey, Jeffrey~A.},
      author={Moore, Gregory},
       title={Algebras, {BPS} states, and strings},
        date={1996},
     journal={Nucl. Phys. B},
      volume={463},
      number={2-3},
       pages={315\ndash 368},
}

\bib{kobayashinomizu1}{book}{
      author={Kobayashi, Shoshichi},
      author={Nomizu, Katsumi},
       title={Foundations of differential geometry},
   publisher={Wiley},
        date={1963},
      volume={1},
}

\bib{Louis:1996mt}{article}{
      author={Louis, Jan},
      author={Sonnenschein, Jacob},
      author={Theisen, Stefan},
      author={Yankielowicz, Shimon},
       title={Non-perturbative properties of heterotic string vacua
  compactified on k3$\times$ t2},
        date={1996},
     journal={Nucl. Phys. B},
      volume={480},
      number={1-2},
       pages={185\ndash 212},
}

\bib{swann2014elementary}{book}{
      author={Macia, Oscar},
      author={Swann, Andrew},
       title={Elementary deformations and the {hyperK{\"a}hler}-quaternionic
  {K{\"a}hler} correspondence},
      series={in ``{Real} and complex submanifolds'', Daejoen, Korea, August
  2014, Springer Proc. Math. Stat.},
   publisher={Springer, Tokyo},
        date={2014},
      volume={106},
         url={http://dx.doi.org/10.1007/978-4-431-55215-4_30},
}

\bib{Nemeschansky:1986yx}{article}{
      author={Nemeschansky, Dennis},
      author={Sen, Ashoke},
       title={Conformal invariance of supersymmetric $\sigma$-models on
  {Calabi-Yau} manifolds},
        date={1986},
     journal={Phys. Lett. B},
      volume={178},
      number={4},
       pages={365\ndash 369},
}

\bib{mathematica}{manual}{
      author={{Wolfram Research, Inc.}},
       title={Mathematica 10.0},
         url={https://www.wolfram.com},
}

\end{biblist}
\end{bibdiv}

\end{document}
